\newtheorem{thm}{Theorem}[section]
\newtheorem{prp}[thm]{Proposition}
\newtheorem{lem}[thm]{Lemma}
\newtheorem{cor}[thm]{Corollary}
\newtheorem*{con*}{Conjecture 2}
\theoremstyle{definition}
\newtheorem{rmk}[thm]{Remark}
\numberwithin{equation}{section}
\newcommand{\T}{{\mathbb T}}
\newcommand{\Z}{{\mathbb Z}}
\newcommand{\R}{{\mathbb R}}
\newcommand{\N}{{\mathbb N}}
\renewcommand{\AA}{{\mathcal A}}
\newcommand{\PP}{{\mathcal P}}
\newcommand{\UU}{{\mathcal U}}
\newcommand{\HH}{{\mathcal H}}
\renewcommand{\H}{\mathrm{H}}
\newcommand{\HZ}{\mathrm{HZ}}
\newcommand{\Crit}{\mathrm{Crit\,}}
\newcommand{\id}{\mathrm{id}}
\newcommand{\di}{{\mathrm d}}
\newcommand{\rk}{\mathrm {rk\,}}
\newcommand{\sk}{{\mathrm{sk}}}
\newcommand{\SH}{{\mathrm{SH}}}
\newcommand{\HF}{{\mathrm{HF}}}
\newcommand{\CF}{{\mathrm{CF}}}
\newcommand{\sys}{\mathrm{sys}}
\newcommand{\bir}{\mathrm{bir}}
\newcommand{\diff}{\mathrm{d}}
\newcommand{\injrad}{\mathrm{injrad}}
\newcommand{\xx}{\bm{x}}
\newcommand{\yy}{\bm{y}}
\newcommand{\vv}{\bm{v}}
\newcommand{\zz}{\bm{z}}
\newcommand{\p}{\partial}
\newcommand{\into}{\hookrightarrow}
\newcommand{\x}{\times}
\newcommand{\one}
{{{\mathchoice \mathrm{ 1\mskip-4mu l} \mathrm{ 1\mskip-4mu l}
\mathrm{ 1\mskip-4.5mu l} \mathrm{ 1\mskip-5mu l}}}}
\newcommand{\beq}{\begin{equation}}
\newcommand{\beqn}{\begin{equation}\nonumber}
\newcommand{\eeq}{\end{equation}}
\newcommand{\bea}{\begin{equation}\begin{aligned}}
\newcommand{\bean}{\begin{equation}\begin{aligned}\nonumber}
\newcommand{\eea}{\end{aligned}\end{equation}}
\noindent\textsc{Seoul National University, Department of Mathematical Sciences, Research Institute in Mathematics, Gwanak-Gu, 
	Seoul 08826, South Korea} \par  
\title{Relative Hofer--Zehnder capacity\\ and positive symplectic homology}
\author{Gabriele Benedetti and Jungsoo Kang\\ \\
(with an appendix by Alberto Abbondandolo and Marco Mazzucchelli)}
\date{}
\begin{document}
\maketitle
\begin{center}\textit{\large Dedicated to Claude Viterbo on the occasion of his sixtieth birthday}\end{center}
\medskip
\begin{abstract}
We study the relationship between a homological capacity $c_{\mathrm{SH}^+}(W)$ for Liouville domains $W$ defined using positive symplectic homology and the existence of periodic orbits for Hamiltonian systems on $W$: If the positive symplectic homology of $W$ is non-zero, then the capacity yields a finite upper bound to the $\pi_1$-sensitive Hofer--Zehnder capacity of $W$ relative to its skeleton and a certain class of Hamiltonian diffeomorphisms of $W$ has infinitely many non-trivial contractible periodic points. En passant, we give an upper bound for the spectral capacity of $W$ in terms of the homological capacity $c_{\mathrm{SH}}(W)$ defined using the full symplectic homology. Applications of these statements to cotangent bundles are discussed and use a result by Abbondandolo and Mazzucchelli in the appendix, where the monotonicity of systoles of convex Riemannian two-spheres in $\mathbb R^3$ is proved.
\end{abstract}

\section{Periodic orbits for Hamiltonian systems}\label{sec:intro}
In this paper, we prove new existence and multiplicity results for periodic orbits of Hamiltonian systems on Liouville domains using positive symplectic homology. We present our results in Section \ref{s:results}, and, in order to put them into context, we give in this first section a brief (and incomplete) account of previous work on the existence of periodic orbits for Hamiltonian systems that will be relevant to our work. For more details, we recommend the excellent surveys \cite{CHLS,Gin05,GG15}.  
\subsection{From calculus of variation to pseudoholomorphic curves}
From the end of the Seventies to the beginning of the Nineties of last century a series of tremendous advances has been made in understanding the existence of periodic solutions for smooth Hamiltonian systems in $\R^{2n}$ with standard symplectic form $\omega_{\R^{2n}}$ using variational techniques. Classically, two settings have been considered.

In the non-autonomous setting, one may consider a smooth Hamiltonian function $H$ which is periodic in time (say with period one) and compactly supported, and study the set $\mathcal P_m(H)$ of periodic orbits of the Hamiltonian vector field of $H$ with some integer period $m\in\N$. This is equivalent to looking at periodic points of the Hamiltonian diffeomorphism obtained as the time-one map of the Hamiltonian flow of $H$. The set $\mathcal P_m(H)$ surely contains trivial orbits, namely those which are constant and contained in the zero-set of the Hamiltonian, and the question is if there are non-trivial elements in this set. In 1992 Viterbo  proved the following remarkable result.

\begin{thm}[Viterbo, \cite{Vit92}]\label{t:V}
Let $\varphi\colon\R^{2n}\to\R^{2n}$ be a compactly supported Hamiltonian diffeomorphism different from the identity. Then, $\varphi$ admits a non-trivial one-periodic point and infinitely many distinct non-trivial periodic points. 
\end{thm}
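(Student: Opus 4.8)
The plan is to detect periodic orbits of $\varphi$ and of its iterates $\varphi^m$ through the filtered Floer homology of these maps --- equivalently, through Viterbo's generating-function cohomology --- using the spectral numbers attached to this homology as the bridge between algebra and dynamics. Write $\varphi=\phi_H^1$ for a one-periodic, compactly supported Hamiltonian $H=H_t$ on $\R^{2n}$ with $\mathrm{supp}\,H$ contained in a ball. For each $m\in\N$ the iterate $\varphi^m$ is generated by $H^{(m)}_t:=mH_{mt}$; its contractible one-periodic orbits are exactly the $m$-periodic points of $\varphi$, and such an orbit $x$ has action $\mathcal A_{H^{(m)}}(x)=-\int_{\mathbb D}\bar x^*\omega+\int_0^1H^{(m)}_t(x(t))\,\di t$, which vanishes for every trivial orbit. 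Since $\varphi$ is compactly supported, its graph coincides with the diagonal outside a compact set, so it carries a generating function quadratic at infinity, and the (co)homology of the associated pair of sublevel sets is that of the one-point compactification $S^{2n}$ of $\R^{2n}$; this furnishes, for every compactly supported $\psi$, two spectral invariants $c_-(\psi)\le c_+(\psi)$, each of which is the action of a genuine one-periodic orbit of a Hamiltonian generating $\psi$.

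For the existence statement, recall that the Viterbo distance $c_+(\varphi)-c_-(\varphi)$ of $\varphi$ from $\id$ is non-negative and vanishes precisely when $\varphi=\id$. As $\varphi\neq\id$ we get $c_-(\varphi)<c_+(\varphi)$, so at least one of $c_\pm(\varphi)$ is non-zero, and the orbit realizing it has non-zero action, hence is not trivial. This is the desired non-trivial one-periodic point.

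For the multiplicity statement, suppose for contradiction that $\varphi$ has only finitely many periodic points, say with underlying simple periodic orbits $\gamma_1,\dots,\gamma_r$ of minimal periods $k_1,\dots,k_r$. First, $\varphi^m\neq\id$ for every $m\ge 1$: a compactly supported Hamiltonian diffeomorphism of $\R^{2n}$ cannot have finite order, since averaging the Euclidean metric over the cyclic group it generates makes it an isometry equal to the identity near infinity, hence everywhere. Applying the existence argument to $\varphi^m$ then yields, for each $m$, a non-trivial $m$-periodic orbit realizing $c_+(\varphi^m)$ or $c_-(\varphi^m)$; by assumption it is an iterate $\gamma_i^{(m/k_i)}$, so $c_\pm(\varphi^m)\in\{\,m\,a_i:k_i\mid m\,\}$ with $a_i:=\mathcal A_{H^{(k_i)}}(\gamma_i)/k_i$, and in particular the ratios $c_\pm(\varphi^m)/m$ take only finitely many values. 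Now the homology class detecting $c_+$ sits in a fixed degree, so the iterate realizing $c_+(\varphi^m)$ must carry local Floer homology in a fixed degree $\delta$ for all $m$. By the iteration inequalities for the Conley--Zehnder index, the local Floer homology of $\gamma_i^{(\ell)}$ is supported in a band of degrees of bounded width centred at $\ell\,\widehat\mu(\gamma_i)$; hence the class can persist in degree $\delta$ for infinitely many $m$ only if some $\widehat\mu(\gamma_i)=0$, in which case a finer analysis of the local Floer homology of $\gamma_i$ shows it to be a symplectically degenerate maximum --- an orbit that already forces the existence of simple periodic orbits of all sufficiently large periods, contradicting finiteness. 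If, on the other hand, no $\gamma_i$ has vanishing mean index, then for $m$ large no iterate carries local Floer homology in degree $\delta$, contradicting that $\HF_\delta(\varphi^m)$ is a fixed non-zero group (it computes $\H_\delta(S^{2n})$) assembled from precisely these local groups. Either way the finiteness assumption collapses, and $\varphi$ has infinitely many distinct non-trivial periodic points.

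The main obstacle is the multiplicity part: one must control the filtered Floer (equivalently, generating-function) homology of \emph{all} iterates of $\varphi$ simultaneously and rule out that finitely many simple orbits account for it. This is exactly the circle of ideas surrounding the Conley conjecture, and the delicate ingredients are the iteration formulas for the Conley--Zehnder index together with the behaviour of local Floer homology under iteration, culminating in the analysis of symplectically degenerate maxima. The existence part, by contrast, is essentially formal once the two spectral invariants and the fact that $c_+(\varphi)-c_-(\varphi)=0$ exactly for $\varphi=\id$ are available.
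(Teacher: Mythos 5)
The paper itself does not prove Theorem~\ref{t:V}; it simply cites it from \cite{Vit92}, and the surrounding text (the sentence after~\eqref{e:sigmae} that finiteness of $c_\sigma(U)$ yields finite spectral norm for every compactly supported $\varphi$ and ``thus carries infinitely many non-trivial contractible periodic points'') makes clear which route the authors have in mind. Your existence argument, via positivity and non-degeneracy of the spectral norm $\gamma(\varphi)=c_+(\varphi)-c_-(\varphi)$, matches that route. For the multiplicity statement, however, you have reached for the Conley-conjecture machinery --- Conley--Zehnder index iteration, the bounded degree-band of local Floer homology, and the analysis of symplectically degenerate maxima. That machinery does prove what you want, but it post-dates Viterbo's 1992 paper by roughly two decades (it is essentially the Ginzburg--G\"urel proof of the Conley conjecture), and it is considerably heavier than what is needed for $\R^{2n}$. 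Viterbo's original argument, and the one implicit in the paper's reduction via~\eqref{e:sigmae}, is much lighter and hinges on the \emph{boundedness} of the spectral norm, which you never invoke: because $\mathrm{supp}\,\varphi$ sits in a displaceable ball, $\gamma(\varphi^m)\le 2\,e(\mathrm{supp}\,\varphi)<\infty$ for every $m$, so $|c_\pm(\varphi^m)|$ stays bounded as $m\to\infty$. If there were only finitely many simple non-trivial periodic orbits, with action-per-period values $a_1,\dots,a_r$, then $c_\pm(\varphi^m)\in\{m\,a_i : k_i\mid m\}\cup\{0\}$, and boundedness forces $c_\pm(\varphi^m)=0$ for all large $m$, hence $\gamma(\varphi^m)=0$ and $\varphi^m=\id$ --- contradicting the fact (which you prove correctly by averaging the metric) that a compactly supported Hamiltonian diffeomorphism of $\R^{2n}$ other than the identity has infinite order. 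That argument requires no index theory, no local Floer homology, and no degenerate-maximum analysis. So: your proposal is not wrong, but it trades a short boundedness argument, available precisely because every ball in $\R^{2n}$ is displaceable, for the far more delicate Conley-conjecture apparatus (whose payoff is that it works on closed aspherical manifolds, where nothing is displaceable --- not needed here). Also note a small imprecision: $\hat\mu(\gamma_i)=0$ alone does not make $\gamma_i$ a symplectically degenerate maximum; one needs in addition that its local Floer homology is nonzero in the extreme admissible degree, which is exactly what your degree-persistence argument supplies, so the logic is salvageable but should be stated that way.
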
 

In the autonomous setting, the Hamiltonian function $H$ is coercive and independent of time. Thus, it is meaningful to study the set of periodic orbits having arbitrary real period and lying on a regular energy level $\Sigma$ of $H$, which is compact by coercivity. Let $j_0\colon\Sigma\into\R^{2n}$ be the inclusion of $\Sigma$, and denote the set of periodic orbits of $H$ on $\Sigma$ by $\mathcal P(\Sigma)$. This notation is justified since periodic orbits of $H$ on $\Sigma$ correspond to closed characteristics of $\Sigma$, i.e.~embedded circles in $\Sigma$ which are everywhere tangent to the characteristic distribution $\ker(j_0^*\omega_{\R^{2n}})$. In 1987 Viterbo \cite{Vit87} proved the existence of a closed characteristic for a large class of hypersurfaces $\Sigma$ in $\R^{2n}$, namely those of contact type (see \cite{HZ11} for the definition), thus confirming the Weinstein conjecture in $\R^{2n}$ \cite{Wei79}. This considerably extends previous results by Weinstein \cite{Wei78} and Rabinowitz \cite{Rab78} for hypersurfaces being convex or starshaped, two properties which are not invariant by symplectomorphisms.

Soon after, it was understood that Viterbo's result is a manifestation of more general phenomena which go under the name of nearby existence and almost existence. In order to explain these phenomena, we consider a thickening of $j_0\colon\Sigma\to\R^{2n}$, namely an embedding $j\colon(-\varepsilon_0,\varepsilon_0)\times\Sigma\to\R^{2n}$ such that $j(0,\cdot)=j_0$. We denote $\Sigma_\varepsilon=j(\{\varepsilon\}\times\Sigma)$ for $\varepsilon\in(-\varepsilon_0,\varepsilon_0)$. For a thickening of $j_0$, the nearby existence theorem \cite{HZ87} ensures that for a sequence of values $\varepsilon_k\to 0$, there holds $\mathcal P(\Sigma_{\varepsilon_k})\neq\varnothing$. On the other hand, the almost existence theorem \cite{Str90} yields the stronger assertion that $\mathcal P(\Sigma_\varepsilon)\neq\varnothing$ for almost every $\varepsilon\in(-\varepsilon_0,\varepsilon_0)$. Viterbo's result follows from either of these two theorems noticing that a contact, or more generally stable \cite{HZ11}, hypersurface admits a thickening such that the characteristic foliation of $\Sigma$ is diffeomorphic to the one of $\Sigma_\epsilon$ for all $\varepsilon\in(-\varepsilon_0,\varepsilon_0)$. A classical argument by Hofer and Zehnder \cite{HZ11} proves the almost existence theorem by showing the finiteness of the Hofer--Zehnder capacity for bounded domains of $\R^{2n}$ of which we now recall the definition. If $U\subset \R^{2n}$ is a domain possibly with boundary, let $\mathcal H(U)$ be the set of Hamiltonians $H\colon U\to(-\infty,0]$ that vanish outside a compact set of $U\setminus\p U$ and achieve their minimum on an open set of $U$. The Hofer--Zehnder capacity of $U$ is defined by
\[
c_{\HZ}(U):=\sup_{H\in\mathcal H(U)}\big\{-\min H\ \big|\ \mathcal P_{\leq 1}(H)=\mathrm{Crit}(H)\big\}\in(0,\infty],
\]
where we used the notation $\mathrm{Crit}(H)$ for the set of critical points of $H$ and $\mathcal P_{\leq t}(H)$ for the set of periodic orbits of the Hamiltonian vector field of $H$ with period at most $t$.
\medskip

Much of research in Hamiltonian dynamics in the last thirty years has been driven by considering the two settings described above for general symplectic manifolds $(M,\omega)$. For instance, one can ask if the nearby or almost existence theorems hold for a given hypersurface $\Sigma\subset M$. To this end, the Hofer--Zehnder capacity, whose definition extends verbatim to open subsets of $M$, still plays a central role as the almost existence theorem holds for hypersurfaces contained in a domain $U\subset M$ of finite Hofer--Zehnder capacity \cite{MS05}. It is also sometimes useful to consider a refined quantity, called the $\pi_1$-sensitive Hofer--Zehnder capacity, given by 
\[
c_{\HZ}^o(U):=\sup_{H\in\mathcal H(U)}\big\{-\min H\ \big|\ \mathcal P^o_{\leq 1}(H)=\mathrm{Crit}(H)\big\}\in(0,\infty],
\]
where $\mathcal P^o_{\leq 1}(H)$ is the set of elements in $\mathcal P_{\leq 1}(H)$ which are contractible in $M$. Clearly $c_{\HZ}(U)\leq c_{\HZ}^o(U)$ and finiteness of $c_{\HZ}^o(U)$  implies the almost existence theorem with the additional information that the periodic orbits we find are contractible in $M$. However, verifying finiteness of $c_{\HZ}(U)$ or $c_{\HZ}^o(U)$ is usually highly non-trivial and there are also many examples where these quantities are infinite \cite{Ush12}.

The first relevant class of manifolds one encounters outside $\R^{2n}$ is given by cotangent bundles $T^*Q$ over closed manifolds $Q$ endowed with the standard symplectic form $\omega_{T^*Q}$. Besides their importance as phase spaces in classical mechanics, they give local models for Lagrangian submanifolds of abstract symplectic manifolds by the Weinstein neighborhood theorem. This implies that if $Q$ admits a Lagrangian embedding into $\R^{2n}$ (for instance $Q$ is a torus), then any bounded subset $U\subset T^*Q$ can be symplectically embedded into a bounded subset of $\R^{2n}$ and hence the almost existence theorem holds for $U$, see for instance \cite[Proposition 1.9]{Iri14}. For general manifolds $Q$, Hofer and Viterbo \cite{HV88} could carry over the variational setting employed in $\R^{2n}$ by exploiting the fact that the fibers of $T^*Q$ are linear spaces to prove the following remarkable result (see the work of Asselle and Starostka \cite{AS20} for a simplified approach).
\begin{thm}[Hofer and Viterbo, \cite{HV88}]\label{t:HV}
The nearby existence theorem holds for hypersurfaces $\Sigma\subset T^*Q$ which bound a compact region containing the zero-section in its interior.
\end{thm}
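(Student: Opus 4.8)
The plan is to argue by contradiction, transposing to $T^*Q$ the Hofer--Zehnder minimax that underlies the finiteness of the Hofer--Zehnder capacity of bounded domains in $\R^{2n}$; the one structural fact that makes this transposition work is that the fibres of $T^*Q$ are \emph{linear}, so the free loop space of $T^*Q$ carries the same strongly-indefinite geometry as the one of $\R^{2n}$. Suppose the conclusion fails for $\Sigma=\partial W$. Then there is $\delta\in(0,\varepsilon_0)$ with $\mathcal P(\Sigma_\varepsilon)=\varnothing$ for all $\varepsilon\in(0,\delta)$; write $W_\varepsilon$ for the compact region bounded by $\Sigma_\varepsilon$, so $Q\subset\mathrm{int}(W)=\mathrm{int}(W_0)$ and $W_\delta:=W\cup j((0,\delta]\times\Sigma)$ is a compact neighbourhood of the zero-section. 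For $k>0$ I would take a smooth $H_k\colon T^*Q\to(-\infty,0]$ supported in $\mathrm{int}(W_\delta)$, equal to $-k$ on $W$ and, on the collar $j([0,\delta]\times\Sigma)$, a monotone function of the collar coordinate alone rising from $-k$ to $0$.

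Such an $H_k$ has \emph{no} non-constant $1$-periodic orbit: $X_{H_k}$ preserves the level sets of $H_k$; on $W$ and outside $W_\delta$ it vanishes; on the collar it is a non-negative multiple of the characteristic vector field of the corresponding $\Sigma_\varepsilon$, non-zero only for $\varepsilon\in(0,\delta)$, so a non-constant $1$-periodic orbit there would give a closed characteristic of some $\Sigma_\varepsilon$ with $\varepsilon\in(0,\delta)$, contrary to assumption. Hence all $1$-periodic orbits of $X_{H_k}$ are constant, with action in $[0,k]$.

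The contradiction will come from the minimax. Consider the action functional $\mathcal A_{H_k}(x)=\int_{S^1}x^*\lambda-\int_{S^1}H_k(x(t))\,dt$ ($\lambda$ the Liouville form) on a Hilbert completion $\mathcal E$ of the space of contractible loops in $T^*Q$, whose critical points are the contractible $1$-periodic orbits of $X_{H_k}$ and on which $x\mapsto\int_{S^1}H_k(x(t))\,dt$ is bounded, since $H_k$ is compactly supported. Using that $T^*Q\to Q$ has linear fibres, one trivialises the symplectic bundle along loops and gets a consistent splitting $\mathcal E=\mathcal E^-\oplus\mathcal E^0\oplus\mathcal E^+$ on which the symplectic-area part of $\mathcal A_{H_k}$ is negative definite, vanishing, and coercive positive definite respectively (changing trivialisation only perturbs it compactly), so the relative-index and linking calculus of Hofer--Zehnder applies. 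One builds a linking configuration relating a sphere in $\mathcal E^+$ to a bounded piece of $\mathcal E^-\oplus\mathcal E^0$ (plus one extra positive mode), with parameters tuned --- using that $-\min H_k=k$ is large --- so as to separate strictly the relevant infimum and supremum of $\mathcal A_{H_k}$, yielding a minimax value $\widehat c_k>k$. One then verifies the Palais--Smale condition at level $\widehat c_k$: since $\nabla H_k$ is bounded the $\mathcal E^\pm$-components of a Palais--Smale sequence stay bounded, and its $\mathcal E^0$-component cannot run off to infinity, where $H_k$ is constant and the action would tend to $0\neq\widehat c_k$. Therefore $\widehat c_k$ is the action of a $1$-periodic orbit of $X_{H_k}$, which must be non-constant as $\widehat c_k>k$, contradicting the previous paragraph. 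This contradiction, for any sufficiently large $k$, proves the theorem. (In fact this is the cotangent-bundle version of the finiteness of the Hofer--Zehnder capacity of $\mathrm{int}(W_\delta)$, which by the classical Hofer--Zehnder argument gives nearby --- even almost --- existence for $\Sigma$.)

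I expect the genuine obstacle to be that middle step: making rigorous the strongly-indefinite variational theory on the free loop space of the non-linear, non-compact manifold $T^*Q$ --- the consistent splitting of $\mathcal E$ and its relative index, the precise linking geometry and the tuning by which the minimax value overcomes the intrinsic ``resonance'', and the Palais--Smale analysis (with the $H^{1/2}$ regularity subtleties handled, say, by a finite-dimensional saddle-point reduction that removes the high Fourier modes). This is non-perturbative, $X_{H_k}$ being far from a linear flow; in $\R^{2n}$ it is classical \cite{HZ11}, and its transposition to $T^*Q$, powered precisely by the linearity of the fibres, is the content of \cite{HV88}, streamlined in \cite{AS20}.
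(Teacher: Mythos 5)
Theorem~\ref{t:HV} is cited from \cite{HV88} without proof, so there is no in-paper argument to compare against; I can only assess the sketch on its own terms. Your outline correctly identifies the strategy the paper attributes to Hofer--Viterbo --- a strongly indefinite minimax on the free loop space of $T^*Q$, made possible by the linearity of the fibres --- and you honestly flag the middle step as the genuine obstacle. But that middle step is essentially the entire content of \cite{HV88} (streamlined in \cite{AS20}), so what you have written is a plan rather than a proof.

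The concluding parenthetical is, moreover, wrong in a way that points to the actual gap. You assert that your argument would yield $c_{\HZ}(\mathrm{int}(W_\delta))<\infty$ and hence almost existence. If the Hofer--Zehnder linking estimates transposed verbatim to $T^*Q$, as your sketch assumes, Hofer--Viterbo would have obtained almost existence in \emph{every} cotangent bundle; they did not, and the paper's own discussion makes the distinction a point of emphasis (almost existence in $T^*Q$ is established only under additional hypotheses --- Lagrangian embeddability of $Q$ into $\R^{2n}$, Irie's ring-theoretic criterion, or non-vanishing of $\H(\mathcal L_\one Q,Q)$ as in Corollary~\ref{c:cot}). The step that fails is exactly the one you describe as ``tuning the linking configuration so as to separate strictly the relevant infimum and supremum, yielding a minimax value $\widehat c_k>k$'': the Fourier-type splitting of the loop space of $T^*Q$ is not global --- it is modulated by the (topologically non-trivial) base loop --- and the inequality that in $\R^{2n}$ pushes the minimax level strictly past the constant-orbit actions \emph{uniformly in the admissible Hamiltonian} does not carry over. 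That breakdown is precisely why the output of \cite{HV88} is a sequence $\varepsilon_k\to 0$ with $\mathcal P(\Sigma_{\varepsilon_k})\neq\varnothing$ rather than a full-measure set of $\varepsilon$, and an argument which, as yours does, claims the verbatim capacity-finiteness conclusion without locating where the $\R^{2n}$ estimate degenerates has its gap at exactly that step.
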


To go beyond cotangent bundles, one leaves the classical variational approach and enters the theory of pseudoholomorphic curves initiated by Gromov in \cite{Gro85}. For closed symplectic manifolds, the non-autonomous setting is governed by the Arnold conjecture \cite{Arn86}, giving a lower bound on the number of fixed points of Hamiltonian diffeomorphisms, and by the Conley conjecture, asserting that for a large class of symplectic manifolds every Hamiltonian diffeomorphism has infinitely many periodic points. These conjectures led to a tremendous development in the field and have been settled by now in very general forms, see e.g.~\cite{Flo88,Flo89,HS95,Ono95,LO96,LT98,FO99} for the Arnold conjecture and \cite{Gin10,GG15} for the Conley conjecture.

Pseudoholomorphic curves have also been employed in the autonomous setting starting from the pioneering work of Hofer and Viterbo \cite{HV92}, where they are used to give an upper bound on $c_{\HZ}^o$ of certain symplectic manifolds. Their approach has been further developed using Gromov--Witten invariants in \cite{Lu98,LT00,Mac04,Lu06,Ush11}.
\medskip

For both the non-autonomous and the autonomous setting on general symplectic manifolds, a key role is played by the notion of displaceability and of spectral invariants. Recall that a domain $U$ is displaceable in $M$ if there exists a compactly supported Hamiltonian diffeomorphism $\phi_K^1$ of $M$ such that $\phi_K^1(U)\cap U=\varnothing$. Quantitatively speaking, one can define the displacement energy of $U$ by
\[
e(U):=\inf \big\{ \Vert K\Vert\ |\ \phi_K^1(U)\cap U=\varnothing\big\},
\]
where $\Vert K\Vert$ is the Hofer norm of $K$, see \cite{HZ11}. Finiteness of $e(U)$ is equivalent to displaceability of $U$. If $M$ is either closed or convex at infinity, one defines the spectral capacity of $U$ by
\[
c_\sigma(U):=\sup\{\sigma(H)\},
\]
where the supremum is taken over all Hamiltonian functions compactly supported in $U\setminus\p U$ and $\sigma(H)$ represents the spectral invariant of $H$ associated with the fundamental class in Floer homology \cite{Vit92,Sch00,Oh,FS07}. The chain of inequalities 
\begin{equation}\label{e:sigmae}
c_{\HZ}^o(U)\leq c_\sigma(U)\leq e(U)
\end{equation}
holds by \cite{Ush11,Sug19} generalizing work contained in \cite{Vit92,Sch00,FGS05,Schl06,FS07,HZ11} (see also \cite{Gur08} for an extension of this argument to some open, non-convex manifolds). One significant implication of finiteness of $c_\sigma(U)$ is that every Hamiltonian diffeomorphism $\varphi$ compactly supported in $U\setminus \p U$ has finite spectral norm and thus carries infinitely many non-trivial contractible periodic points. Since a bounded domain in $\R^{2n}$ is displaceable, the latter inequality in \eqref{e:sigmae} generalizes Theorem \ref{t:V}.
 
Thus, displaceability gives a remarkable criterion to show that $c_\sigma(U)$ and hence $c_\HZ^o(U)$ are finite. In many cases of interest, however, either it is difficult to prove that a subset is displaceable or it can be shown that many subsets are not displaceable even by topological reasons (see the notion of stable displacement for a fix of this second difficulty \cite{Schl06}). A precious tool to study periodic orbits in these cases is represented by symplectic homology, see e.g.~\cite{CGK04}. It was originally defined for $\R^{2n}$ by Floer and Hofer \cite{FH94} and for more general manifolds by Cieliebak, Floer and Hofer \cite{CFH95}, and further developed by Viterbo in \cite{Vit99}. In the next subsection, we discuss how symplectic homology can help us in finding periodic orbits on Liouville domains, an important class of symplectic manifolds.

\subsection{Capacities and symplectic homology for Liouville domains}\label{ss:HZ}

A Liouville domain $(W,\lambda)$ is a compact manifold $W$ with boundary $\p W$ such that the exterior derivative $\di\lambda$ of the one-form $\lambda$ is symplectic and the Liouville vector field $Y$ on $W$ characterized by $\di\lambda(Y,\cdot)=\lambda$ points outwards along $\p W$.  The one-form $\lambda|_{\p W}$ restricted to $\p W$ is a contact form and we denote by $\mathrm{spec}(R_\lambda)$ the set of periods of periodic orbits of the Reeb vector field $R_\lambda$ of $(\p W,\lambda|_{\p W})$. It is a nowhere dense closed subset of $\R$. 

We define the skeleton $W_\sk$ of the Liouville domain to be the set of $\alpha$-limits of the flow $\phi_Y^t$ of $Y$ on $W$, i.e.
\[
W_\sk:=\bigcap_{t<0}\phi_Y^{t}(W).
\]
The complement $W\setminus W_\sk$ is symplectomorphic to the negative half of the symplectization of $(\p W,\lambda|_{\p W})$ via the map 
\[
\Psi\colon\p W\x(0,1]\to W,\qquad (x,r)\mapsto \phi_Y^{\log r}(x)
\]
where $\Psi^*\lambda=r\lambda|_{\p W}$, and we can complete $(W,\di\lambda)$ by attaching a cylindrical end 
\begin{equation}\label{eq:completion}
\widehat W:= W\cup \big(\p W\x [1,\infty)\big)	
\end{equation}
using $\Psi$ and by setting $\hat\lambda=r\lambda|_{\p W}$ on the cylindrical end.

The simplest examples of Liouville domains are starshaped domains in $\R^{2n}$. In this case, $Y$ is the radial vector field and the skeleton is a single point. Other examples are fiberwise starshaped domains in the cotangent bundle $T^*Q$ over a closed manifold $Q$ with the canonical one-form. In this case, $Y$ is the fiberwise radial vector field and the skeleton is the zero-section. Moreover, $\R^{2n}$ and $T^*Q$ are exactly the completions of the two examples we just described. 
\medskip

The symplectic homology $\SH(W;\alpha)$ of $(W,\lambda)$ in the free-homotopy class $\alpha\in[S^1,W]$ is, roughly speaking, generated by periodic orbits of the Reeb vector field $R_\lambda$ and, when $\alpha=\one$ is the class of contractible loops, also by a Morse complex for $(W,\p W)$. We refer to Section \ref{sec:Floer_homology} for the precise definition of symplectic homology. One can also construct the positive symplectic homology $\SH^+(W;\alpha)$ which is generated just by the periodic Reeb orbits and not by the Morse complex, so that $\SH^+(W;\alpha)=\SH(W;\alpha)$ for $\alpha\neq\one$. 

Symplectic homology carries a natural filtration given by periods of Reeb orbits. Using this filtration, several kinds of symplectic capacities can be constructed. One of them is $c_{\SH}(W)\in(0,\infty]$ which reads off the minimal filtration level such that the Morse complex of $(W,\p W)$ is annihilated in $\SH(W;\one)$, see \eqref{eq:c_SH}, \cite[Section 5.3]{Vit99}, \cite{H00}, and \cite[Proposition 3.5]{GS18}. It is finite if and only if $\SH(W;\one)$ vanishes. We have the inequalities
\begin{equation}\label{e:HZ-SH}
c^{\,o}_{\HZ}(W)\leq c_{\SH}(W)\leq e(W),
\end{equation}
where the displacement energy is taken with respect to the completion of $W$. The former inequality is due to Irie \cite{Iri14}. The latter is due to Borman--McLean \cite[Theorem 1.5(ii)]{BM14}, Kang \cite[Corollary A1]{Kan} and Ginzburg--Shon \cite[Corollary 3.9]{GS18}, see also \cite[Corollary 3.10]{GS18} for an inequality involving the stable displacement energy, and \cite[Theorem 2.2]{Su16} for a similar inequality in wrapped Floer homology. We complement \eqref{e:HZ-SH} and \eqref{e:sigmae} by the following theorem which relies on Irie's idea in \cite{Iri14}.
\begin{thm}\label{t:cgamma}
For a Liouville domain $(W,\lambda)$, there holds $c_\sigma(W)\leq c_{\SH}(W)$, and thus
\[
c^{\,o}_{\HZ}(W)\leq c_\sigma(W) \leq c_{\SH}(W)\leq e(W).
\] 
In particular, if $\SH(W;\one)=0$, then every Hamiltonian diffeomorphism supported in $W\setminus\p W$ admits a non-trivial fixed point and infinitely many distinct non-trivial periodic points.
\end{thm}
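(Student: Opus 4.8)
The plan is to establish the single new inequality $c_\sigma(W) \leq c_{\SH}(W)$, since the remaining chain then follows by concatenating \eqref{e:sigmae} (applied to $U = W$ inside its completion $\widehat W$, which is convex at infinity) with \eqref{e:HZ-SH}, and the final dynamical assertion is an immediate consequence of finiteness of $c_\sigma(W)$: if $\SH(W;\one)=0$ then $c_{\SH}(W)<\infty$, hence $c_\sigma(W)<\infty$, and as recalled in the text every Hamiltonian diffeomorphism compactly supported in $W\setminus\p W$ then has finite spectral norm and therefore infinitely many non-trivial contractible periodic points, with a non-trivial fixed point coming from the $m=1$ case.

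To prove $c_\sigma(W)\leq c_{\SH}(W)$, I would fix a Hamiltonian $H$ compactly supported in $W\setminus\p W$ and a number $a > c_{\SH}(W)$, and show $\sigma(H) \le a$. Following Irie's strategy \cite{Iri14}, the point is that the spectral invariant $\sigma(H)$ associated to the fundamental class can be computed on $\widehat W$ via filtered Floer homology, and compactly supported Hamiltonians have the same Floer theory whether one works in $\widehat W$ or in a large ball / with a cofinal family of admissible Hamiltonians defining $\SH(W)$. Concretely: the spectral invariant $\sigma(H)$ is the infimal action level $c$ such that the fundamental class $[\widehat W, \p \widehat W]$ (equivalently, the unit of the ring structure, or the image of the generator of $H_*(W,\p W)$) lies in the image of $\HF^{\le c}(H)\to \HF(H)$. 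By a continuation/monotonicity argument one compares $H$ with the cofinal sequence of Hamiltonians used to build $\SH(W;\one)$, whose positive slopes go to infinity; the definition \eqref{eq:c_SH} of $c_{\SH}(W)$ says precisely that above filtration level $c_{\SH}(W)$ the image of the Morse complex of $(W,\p W)$ dies in $\SH(W;\one)$. Pushing $H$ through such a continuation map and using that continuation maps in the Liouville setting do not increase action by more than the relevant slope contribution, one concludes that the fundamental class already becomes visible below level $a$, i.e.\ $\sigma(H) \le a$. Letting $a \downarrow c_{\SH}(W)$ gives $\sigma(H)\le c_{\SH}(W)$, and taking the supremum over $H$ yields the claim.

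The main obstacle I anticipate is bookkeeping the action filtration across the three different Floer-theoretic frameworks that must be matched: the Floer homology of a compactly supported $H$ on $\widehat W$ (where $\sigma(H)$ lives and where \eqref{e:sigmae} is proved), the cofinal direct system of linear-at-infinity Hamiltonians defining $\SH(W;\one)$ and its subquotient computing $c_{\SH}(W)$, and the identification of the relevant unit/fundamental class across continuation maps. One must be careful that the continuation maps are filtered in the correct direction and that the constant/Morse part of the complex is handled consistently (this is exactly the distinction between $\SH$ and $\SH^+$, and why $c_{\SH}$ rather than $c_{\SH^+}$ appears here). A secondary technical point is checking that $\widehat W$, being convex at infinity, genuinely lies in the scope of the results of \cite{Ush11,Sug19} that give \eqref{e:sigmae}; this is standard but should be stated. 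Once the filtration is set up correctly, the argument is a formal diagram chase essentially identical in spirit to \cite[Section 3]{Iri14}, so I would present the action-level comparison carefully and then invoke Irie's computation for the remaining steps.
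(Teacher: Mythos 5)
Your proposal is correct and follows essentially the same route as the paper: reduce to the single new inequality $c_\sigma(W)\leq c_{\SH}(W)$, establish it via Irie's comparison between the filtered Floer homology of a compactly supported $H$ (extended to an admissible Hamiltonian of slope $a>c_{\SH}(W)$) and the direct system defining $\SH(W;\one)$, and obtain the rest of the chain and the dynamical corollary from \eqref{e:sigmae}, \eqref{e:HZ-SH}, and the spectral-norm argument already cited in the introduction. The "bookkeeping" you flag is carried out in the paper precisely as you anticipate, via a commutative diagram (using Lemma~\ref{lem:isom} and Remark~\ref{r:isom}) showing that $\Phi_{f,\widetilde H}(e_f)$ factors through $\iota^{\epsilon,a}_{-\infty}=0$.
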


We see that vanishing of $\SH(W;\one)$, which is the case e.g.~if $W$ is displaceable in its completion, immediately implies the almost existence theorem for contractible orbits and the statement of Theorem \ref{t:V} for $W$. This line of attack is however not suited to study finiteness of the Hofer--Zehnder capacity for bounded fiberwise starshaped domains $W$ in cotangent bundles $T^*Q$, since $\SH(W;\one)$ does not vanish in this case. Indeed by \cite{Vit96,SW06,AS06,AS14,AS15,Abo15}, there exists Viterbo's isomorphism 
\begin{equation}\label{e:viterbo-iso}
\H(\mathcal L_\alpha Q) \cong \SH(W;\alpha)
\end{equation}
over $\Z_2$-coefficients (which is sufficient for our purpose), where $\mathcal L_\alpha Q$ denotes the free-loop space of $Q$ in the class $\alpha\in[S^1,Q]\cong[S^1,W]$.
However, symplectic homology can still be effectively used to prove the almost existence theorem on cotangent bundles, and we describe two instances how this has been done.
\begin{enumerate}[(a)]
\item For a large class of closed manifolds $Q$ including those for which the Hurewicz map $\pi_2(Q)\to \H_2(Q;\Z)$ is non-zero, Albers, Frauenfelder, and Oancea \cite{AFO17} generalized an idea by Ritter in the simply connected case \cite[Corollary 8]{Rit09} and showed that the symplectic homology of $W\subset T^*Q$ twisted by some local coefficients vanishes. The capacity $c_{\SH}(W)$ using this twisted version still gives an upper bound for $c_{\HZ}^o(W)$.

On the other hand, Frauenfelder and Pajitnov \cite{FP17} considered the $S^1$-equivariant version $\H^{S^1}(\mathcal L_\one Q)\cong \SH^{S^1}(W;\one)$ of isomorphism \eqref{e:viterbo-iso} with $\alpha=\one$ and rational coefficients. Generalizing an approach due to Viterbo \cite{Vit97}, they observe that when $Q$ belongs to the class of rationally inessential manifolds, for instance when $Q$ is simply connected, then $\H^{S^1}(\mathcal L_\one Q)$ with rational coefficients vanishes by Goodwillie's theorem and gives an upper bound on $c_{\HZ}^o(W)$ by a finite capacity coming from $\SH^{S^1}(W;\one)$ similarly as above.
\item The total homology $\bigoplus_{\alpha}\SH(W;\alpha)$ admits a ring structure with unit $e$. Irie \cite{Iri14} proved that $c_{\HZ}(W)$ is finite if there exists a free-homotopy class $\alpha$ different from $\one$ such that $e=x\ast y$ for some $x\in \SH(W;\alpha)$ and $y\in \SH(W;\alpha^{-1})$. Moreover using the fact that  $\ast$ corresponds to the Chas--Sullivan product in $\bigoplus_\alpha\H_*(\mathcal L_\alpha Q)$ through the Viterbo isomorphism, he showed that such condition is fulfilled when the evaluation map $\mathcal L_\alpha Q\to Q$, $x\mapsto x(0)$ has a section.
\end{enumerate}

Like the inequality $c_\HZ^o(W)\leq c_\SH(W)$, also the inequality $c_\sigma(W)\leq c_{\SH}(W)$ established in Theorem \ref{t:cgamma} holds when one twists the coefficients as in \cite{AFO17}. We obtain therefore the following result in the spirit of Theorem \ref{t:V}.
\begin{cor}
Let $Q$ be a closed manifold such that the Hurewicz map $\pi_2(Q)\to \H_2(Q;\Z)$ is non-zero. Then, every compactly supported Hamiltonian diffeomorphism on $T^*Q$ has a non-trivial one-periodic point and infinitely many distinct non-trivial periodic points. \hfill\qed
\end{cor}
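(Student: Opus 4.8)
The plan is to deduce the corollary from the twisted version of Theorem~\ref{t:cgamma} applied to a large disk cotangent bundle. Let $\varphi\neq\id$ be a compactly supported Hamiltonian diffeomorphism of $T^*Q$. Fix an auxiliary Riemannian metric $g$ on $Q$ and choose $r>0$ so large that the support of $\varphi$ lies in the interior of the disk cotangent bundle $W_r:=\{(q,p)\in T^*Q:\ |p|_g\leq r\}$. Then $(W_r,\lambda_{\can})$ is a Liouville domain: its Liouville vector field is the fiberwise radial field, its skeleton is the zero-section $Q$, and its completion $\wh W_r$ is canonically symplectomorphic to $(T^*Q,\omega_{T^*Q})$. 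Thus $\varphi$ is a Hamiltonian diffeomorphism supported in $W_r\setminus\p W_r$, and the whole discussion around Theorem~\ref{t:cgamma} and the paragraph preceding the corollary applies to $W_r$, provided we know that the relevant symplectic homology vanishes.

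Next I would invoke the vanishing result of Albers, Frauenfelder and Oancea \cite{AFO17}: since the Hurewicz map $\pi_2(Q)\to\H_2(Q;\Z)$ is non-zero, the symplectic homology $\SH(W_r;\one)$ of $W_r$, twisted by the local system of coefficients on the free-loop space constructed in \cite{AFO17} (which generalizes Ritter's observation \cite[Corollary 8]{Rit09} beyond the simply connected case), vanishes. This is the only place where the hypothesis on $Q$ enters.

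Finally I would appeal to the twisted analogue of Theorem~\ref{t:cgamma}, which, as remarked just before the corollary, holds verbatim with these twisted coefficients. Vanishing of the twisted $\SH(W_r;\one)$ makes the twisted homological capacity $c_{\SH}(W_r)$ finite, and hence the twisted spectral capacity $c_\sigma(W_r)$, computed in the completion $\wh W_r=T^*Q$, is finite as well. Consequently every Hamiltonian diffeomorphism supported in $W_r\setminus\p W_r$ — in particular $\varphi$ — has finite (twisted) spectral norm and therefore carries a non-trivial contractible one-periodic point together with infinitely many distinct non-trivial contractible periodic points. Since $r$ may be taken arbitrarily large, this covers every compactly supported Hamiltonian diffeomorphism of $T^*Q$, and the corollary follows.

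The proof is thus essentially an assembly of results stated in the paper and in \cite{AFO17}, so I do not expect a genuine obstacle. The one point deserving care is that the entire chain of inequalities in Theorem~\ref{t:cgamma} — above all the construction of the spectral invariant $\sigma$ and the passage from finiteness of $c_\sigma$ to the existence of infinitely many periodic points — continues to make sense after twisting by a non-trivial local system; this is exactly what the sentence preceding the corollary asserts, and it is the step one should double-check when filling in the details. A minor routine verification is that any compactly supported Hamiltonian diffeomorphism of $T^*Q$ genuinely lies in the interior of $W_r$ for $r$ large, which is immediate from compactness of its support.
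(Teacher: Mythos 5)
Your proposal is correct and follows exactly the route the paper intends: the corollary is stated immediately after the remark that the twisted version of Theorem~\ref{t:cgamma} holds, and the paper's $\qed$ is shorthand for precisely the argument you spell out — enclose the support in a large fiberwise disk bundle $W_r$, invoke the vanishing of the twisted $\SH(W_r;\one)$ from \cite{AFO17}, and apply the twisted analogue of Theorem~\ref{t:cgamma}. You also correctly flag the one substantive point (that the spectral-invariant machinery survives twisting) as the assertion to verify, which is exactly what the paper leaves implicit.
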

The approach coming from inequality \eqref{e:HZ-SH} and from (a) above are based on the vanishing of symplectic homology. One can ask if knowledge on the positive symplectic homology $\SH^+(W;\alpha)$ can be translated into a bound for some kind of Hofer--Zehnder capacity. For $\alpha\neq\one$, this idea has been explored by Biran, Polterovich and Salamon in \cite{BPS03}. Inspired by the work of Gatien and Lalonde \cite{GL00}, they introduced a relative capacity as follows. Let $U$ be a symplectic domain possibly with boundary, and let $Z\subset U\setminus\p U$ be a compact subset. Let $\mathcal H_\mathrm{BPS}(U,Z)$ be the set of smooth Hamiltonians $H\colon S^1\x U \to\R$ such that $H$ vanishes outside a compact subset of $U\setminus\p U$ and $\max_{S^1\x Z}H$ is negative. Then, the relative capacity with class $\alpha\in[S^1,U]$, $\alpha\neq\one$ introduced in \cite{BPS03} is given by
\[
c_\mathrm{BPS}(U,Z;\alpha):=\sup_{H\in\mathcal H_\mathrm{BPS}(U,Z)}\Big\{ -\max_{S^1\x Z}H\ \big|\ \mathcal P_1(H;\alpha)=\varnothing\Big\}\in(0,\infty],
\]
where $\mathcal P_1(H;\alpha)$ is the set of elements of $\mathcal P_1(H)$ in the class $\alpha$.
By definition, if $c_\mathrm{BPS}(U,Z;\alpha)$ is finite, we can infer the existence of a one-periodic orbit in the class $\alpha$ for every $H\in\mathcal H_\mathrm{BPS}(U,Z)$ with $-\max_{S^1\x Z}H>c_\mathrm{BPS}(U,Z;\alpha)$. Moreover, much as in the case of the Hofer--Zehnder capacity, finiteness of $c_\mathrm{BPS}$ implies the almost existence theorem for hypersurfaces $\Sigma\subset U\setminus\p U$ bounding a compact region containing $Z$ in its interior.

For the unit-disc cotangent bundle $D^*Q$ of $Q$ with Finsler metric $F$ there holds
\begin{equation}\label{e:compcot}
\ell_\alpha=c_\mathrm{BPS}(D^*Q,Q;\alpha),\qquad\alpha\neq\one
\end{equation}
where $\ell_\alpha$ is the minimal $F$-length of a closed curve in $Q$ in the class $\alpha\in[S^1,Q]\cong[S^1,D^*Q]$. For Riemannian metrics, this is established by Biran, Polterovich and Salamon in \cite{BPS03} for $Q=\T^n$ and by Weber \cite{Web06} for all closed manifolds $Q$. For general Finsler metrics, this is proved by Gong and Xue in \cite{GX19}. As a result, the almost existence theorem for non-contractible orbits holds for compact hypersurfaces which bound a compact region containing the zero-section in its interior. This strengthens the nearby existence Theorem \ref{t:HV} of Hofer and Viterbo for non-simply connected manifolds $Q$.

For Liouville domains $W$ and any class $\alpha\in[S^1,W]$, we can now use $\SH^+(W;\alpha)$ to define $c_{\SH^+}(W;\alpha)\in(0,\infty]$ as the minimal filtration level at which a non-zero class of $\SH^+(W;\alpha)$ appears, see \eqref{eq:sh+}. Thus, the finiteness of $c_{\SH^+}(W;\alpha)$ is equivalent to the non-vanishing of $\SH^+(W;\alpha)$. For contractible loops, we use the notation $c_{\SH^+}(W):=c_{\SH^+}(W;\one)$ and we have $c_{\SH^+}(W)\leq c_{\SH}(W)$ as observed in Lemma \ref{lem:c_SH=c_SH_+}. For non-contractible loops, Weber proved in \cite{Web06} (although the result is not explicitly stated) that 
\begin{equation}\label{e:bpssh}
 c_\mathrm{BPS}(W,W_\sk;\alpha)\leq c_{\SH^+}(W;\alpha),\qquad\alpha\neq\one.
\end{equation}
For cotangent bundles the filtered version of Viterbo's isomorphism \eqref{e:viterbo-iso} yields
\[
\ell_\alpha=c_{\SH^+}(D^*Q;\alpha),\qquad \alpha\neq\one,
\]
so that $\ell_\alpha$, $c_\mathrm{BPS}(D^*Q,Q;\alpha)$ and $c_{\SH^+}(D^*Q;\alpha)$ all coincide.

\section{Statement of main results}\label{s:results}

The present paper originates as an attempt to study a counterpart to the inequality \eqref{e:bpssh} for $\alpha=\one$. To this purpose, we need to replace the Biran--Polterovich--Salamon capacity with another relative Hofer--Zehnder capacity which we now define following the work of Ginzburg and G\"urel \cite{GG04}. For a symplectic manifold $U$ possibly with boundary and a compact subset $Z$ of $U\setminus\p U$, we define the set $\mathcal H(U,Z)$ of smooth Hamiltonians $H\colon U\to\R$ such that $H$ vanishes outside a compact subset of $U\setminus\p U$ and $H=\min H<0$ on a neighborhood of $Z$. Then the relative Hofer--Zehnder capacity is given by 
\[
c_{\HZ}(U,Z):=\sup_{H\in\mathcal H(U,Z)}\big\{ -\min H\ \big|\ \mathcal P_{\leq 1}(H)=\mathrm{Crit}(H)\big\} \in (0,\infty].
\]
Considering only orbits in the class $\alpha\in[S^1,U]$, we can also define $c_{\HZ}(U,Z;\alpha)$. By definition, 
\begin{equation}\label{eq:HZ_BPS}
c_{\HZ}(U,Z;\alpha)\leq c_\mathrm{BPS}(U,Z;\alpha),\qquad \alpha\neq\one.	
\end{equation}
Moreover for Liouville domains $W$, one can easily see 
\begin{equation}\label{eq:min_HZ}
\min\mathrm{spec}(R_\lambda;\alpha)\leq c_{\HZ}(W,W_\sk;\alpha)\qquad \forall \alpha\in [S^1,W]	
\end{equation}
where $\mathrm{spec}(R_\lambda;\alpha)$ is the set of periods of Reeb orbits of $(\p W,\lambda|_{\p W})$ with the class $\alpha$ in $W$.

From now on, we focus on contractible orbits and write $c_{\HZ}^o(U,Z):=c_{\HZ}(U,Z;\one)$. In order to deal with time-dependent Hamiltonians as well, we introduce a slightly different version of $c_{\HZ}^o(U,Z)$ when the symplectic form $\omega=\di\lambda$ is exact on $U$. We consider the set $\widetilde{\mathcal H}(U,Z)$ consisting of smooth time-dependent Hamiltonians $H\colon S^1\x U\to\R$ such that $H$ vanishes outside a compact subset of $S^1\times(U\setminus \p U)$ and $H=\min H<0$ on a neighborhood of $S^1\x Z$. The action of $p$-periodic loops $\gamma\colon\R/p\Z\to  U$ with respect to $H\colon S^1\x U \to\R$ is given by
\begin{equation}\label{e:action}
\mathcal A_H(\gamma)=\int_0^p\gamma^*\lambda-\int_0^p H(t,\gamma(t))\,\di t	.
\end{equation}
For all $a\in(0,\infty]$, we define
\[
\widetilde{c}_\HZ^{\,o}(U,Z,a):=\!\sup_{H\in\widetilde{\mathcal H}(U,Z)}\!\big\{-\min H\ \big|\ \forall x\in \mathcal P_1^o(H),\;\; \mathcal A_H(x)\notin (-\min H,-\min H+a]\big\}\in(0,\infty].
\]
By definition, there holds
\begin{equation}\label{e:cc}
c_{\HZ}^{\,o}(U,Z)\leq \widetilde{c}_{\HZ}^{\,o}(U,Z,a)\qquad \forall a\in(0,\infty].
\end{equation}
Moreover, every $H\in \mathcal {\widetilde H}(U,Z)$ with $-\min H>\widetilde{c}_{\HZ}^{\,o}(U,Z,a)$ has a (non-constant) contractible one-periodic orbit $x$ with
\begin{equation}\label{e:aa}
\mathcal A_H(x)\in(-\min H,-\min H+a].
\end{equation}
Building on these two facts, we obtain the following implications of this newly defined capacity to periodic orbits in the autonomous and non-autonomous setting. To this end, recall that a free-homotopy class $\alpha\in[S^1,U]$ is called torsion if $\alpha^p=\one$ for some $p\in\N$.

\begin{prp}\label{p:finite}
Let $(U,\di\lambda)$ be an exact symplectic manifold possibly with boundary and let $Z$ be a compact subset of $U\setminus \partial U$. 
\begin{enumerate}[(a)]
\item If $\widetilde{c}_{\HZ}^{\,o}(U,Z,a)<\infty$ for some $a\in(0,\infty]$, then the almost existence theorem for contractible orbits holds for every hypersurface in $U\setminus \p U$ bounding a compact region containing $Z$ in its interior.
\item Assume that $\widetilde{c}_{\HZ}^{\,o}(U,Z,a)<\infty$ for some $a<\infty$. If $H\in\widetilde{\mathcal H}(U,Z)$ has only finitely many one-periodic orbits with torsion free-homotopy classes and with action greater than $-\min H$, then for every sufficiently large prime number $p$, there exists a contractible, non-iterated, non-constant, $p$-periodic orbit of $H$  which has action greater than $-p\min H$. In particular every $H\in\widetilde{\mathcal H}(U,Z)$ carries infinitely many distinct contractible, non-constant periodic orbits.
\end{enumerate}
\end{prp}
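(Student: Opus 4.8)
The plan is to deduce both parts from the two ``black-box'' facts recorded just before the statement: every $H\in\widetilde{\mathcal H}(U,Z)$ with $-\min H>\widetilde c_{\HZ}^{\,o}(U,Z,a)$ has a non-constant contractible one-periodic orbit $x$ with $\mathcal A_H(x)\in(-\min H,-\min H+a]$, together with the inequality \eqref{e:cc}. For part (a), I would run the classical Hofer--Zehnder argument in relative form. Given a hypersurface $\Sigma\subset U\setminus\p U$ bounding a compact region $V\supset Z$, thicken it to a family $\Sigma_\varepsilon$ and, for each $\varepsilon$, build a cutoff Hamiltonian $H_\varepsilon\in\widetilde{\mathcal H}(U,Z)$ that equals its (very negative) minimum on a neighborhood of $V$, equals $0$ outside a slightly larger compact set, and is monotone in the collar coordinate on the thin shell between. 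Choosing $-\min H_\varepsilon$ larger than the finite number $\widetilde c_{\HZ}^{\,o}(U,Z,\infty)$ (take $a=\infty$ here), one produces a non-constant contractible one-periodic orbit; a standard action/energy estimate forces this orbit to lie in the shell, and by making the shell shrink onto $\Sigma_{\varepsilon(k)}$ for a suitable sequence $\varepsilon(k)$ one obtains closed characteristics on $\Sigma_\varepsilon$ for a.e.\ $\varepsilon$, all contractible in $U$. This is verbatim the proof of the almost existence theorem from $c_{\HZ}^o<\infty$ (cf.\ \cite{HZ11,MS05}), only localised near $Z$; the one point to check carefully is that the orbit cannot escape to where $H_\varepsilon\equiv 0$ or sit in the region where $H_\varepsilon$ is constant, which is exactly the content of the action window $(-\min H,-\min H+a]$.

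For part (b), the strategy is an iteration/pigeonhole argument on periods, in the spirit of the Conley-conjecture mechanism and of Viterbo's Theorem \ref{t:V}. Fix $a<\infty$ with $\widetilde c_{\HZ}^{\,o}(U,Z,a)<\infty$ and an $H\in\widetilde{\mathcal H}(U,Z)$ with only finitely many one-periodic orbits of torsion free-homotopy class and action $>-\min H$; denote $c:=-\min H>0$. The key observation is that the $p$-fold iterate $H^{(p)}(t,x):=pH(pt,x)$ (reparametrised so that its one-periodic orbits are the $p$-periodic orbits of $H$) again lies in $\widetilde{\mathcal H}(U,Z)$ with $-\min H^{(p)}=pc$, and its action functional satisfies $\mathcal A_{H^{(p)}}=p\,\mathcal A_H$ on the corresponding orbits. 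Apply the existence fact to $H^{(p)}$ with parameter $pa$: once $pc>\widetilde c_{\HZ}^{\,o}(U,Z,pa)$, which holds for all large $p$ since the capacity is finite, we get a contractible (in $U$) one-periodic orbit of $H^{(p)}$, i.e.\ a contractible $p$-periodic orbit $x_p$ of $H$, with
\[
\mathcal A_H(x_p)=\tfrac1p\,\mathcal A_{H^{(p)}}(x_p)\in\bigl(c,\;c+a\bigr],
\]
so in particular $\mathcal A_{H^{(p)}}(x_p)=p\,\mathcal A_H(x_p)>pc=-p\min H$, as required. It remains to arrange that $x_p$ is non-iterated. Here I would take $p$ prime: if $x_p=y^{(p/k)}$ were a nontrivial iterate, then $k\mid p$ forces $k=1$ or $k=p$; the case $k=p$ means $x_p$ is the $p$-th iterate of a one-periodic orbit $y$ of $H$, and then $\mathcal A_H(x_p)=p\,\mathcal A_H(y)$, whereas $\mathcal A_H(x_p)\le c+a$, so $\mathcal A_H(y)\le (c+a)/p$, which is $\le c$ for $p$ large — but $y$ would then be a one-periodic orbit with action $\le c$, and one checks that the orbits produced in the $p$-fold argument have action $\mathcal A_{H^{(p)}}(x_p)>pc$, incompatible with $x_p=y^{(p)}$ unless $\mathcal A_H(y)>c$; since there are only finitely many such $y$ (all of torsion — in fact contractible — class) and their finitely many actions are fixed numbers $<c+a$, only finitely many primes $p$ can satisfy $p\,\mathcal A_H(y)\in(c,c+a]$, so discarding those finitely many primes leaves $x_p$ non-iterated. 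Finally, letting $p$ range over the remaining primes yields contractible, non-constant $p$-periodic orbits of $H$ for infinitely many $p$; distinctness is immediate since they have distinct (prime) minimal periods, giving the ``in particular'' clause — and if there were only finitely many one-periodic orbits to begin with the hypothesis of the theorem is automatically met, so the last sentence applies unconditionally.

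The main obstacle I anticipate is the bookkeeping around iterates and free-homotopy classes in part (b): one must make sure that the orbit $x_p$ extracted from $H^{(p)}$ is genuinely $p$-periodic and not secretly of smaller period, that its free-homotopy class (as a $p$-periodic orbit) interacts correctly with the ``torsion'' hypothesis on the one-periodic orbits of $H$, and that the action normalisations $\mathcal A_{H^{(p)}}=p\mathcal A_H$ and $-\min H^{(p)}=p(-\min H)$ are set up consistently with the definition \eqref{e:action} and with the convention used in $\widetilde c_{\HZ}^{\,o}$. Once the primality trick and the finiteness-of-actions input are in place, the pigeonhole step is short; everything else is the standard reduction already implicit in \eqref{e:cc} and in the displayed existence statement preceding the proposition.
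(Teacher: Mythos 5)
Part (a) matches the paper's approach: use \eqref{e:cc} to reduce to $c_{\HZ}^o(U,Z)<\infty$ and then invoke the relative almost existence theorem of Ginzburg--G\"urel, which the paper simply cites as \cite[Theorem 2.14]{GG04} and which you sketch directly.

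Part (b) has a genuine gap: you apply the existence statement preceding the proposition to the iterated Hamiltonian $H^{(p)}$ with the \emph{scaled} window parameter $pa$, producing an orbit $x_p$ with $\mathcal A_{H^{(p)}}(x_p)\in(pc,pc+pa]$, where $c:=-\min H$, and this destroys the mechanism that excludes iterates. The paper instead applies the existence fact to $H^{\natural p}$ with the \emph{same} parameter $a$, so that $\mathcal A_{H^{\natural p}}(\gamma_{\mathrm{new}})\in(pc,pc+a]$: the window has fixed length $a$, while the action of the $p$-th iterate $\gamma_i^p$ of a one-periodic orbit $\gamma_i$ with $\mathcal A_H(\gamma_i)\geq c+\epsilon$ scales as $p\,\mathcal A_H(\gamma_i)\geq pc+p\epsilon$; once $p\epsilon>a$ this exits the fixed window, giving a contradiction. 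In your scaled window $(pc,pc+pa]$ the iterate $\gamma_i^p$ is excluded only if $\epsilon>a$, which cannot be arranged. Your attempted rescue, that ``only finitely many primes $p$ can satisfy $p\,\mathcal A_H(y)\in(c,c+a]$'', uses the wrong interval: the relevant constraint is $p\,\mathcal A_H(y)\in(pc,pc+pa]$, equivalently $\mathcal A_H(y)\in(c,c+a]$, which is $p$-independent, so discarding finitely many primes accomplishes nothing. (There is also an internal inconsistency: having set $\mathcal A_H(x_p):=\tfrac1p\mathcal A_{H^{(p)}}(x_p)$, if $x_p=y^{(p)}$ then $\mathcal A_H(x_p)=\mathcal A_H(y)$, not $p\,\mathcal A_H(y)$.) The fix is to keep the window at $a$: choose $\epsilon>0$ with $\mathcal A_H(\gamma_i)\geq c+\epsilon$ for the finitely many one-periodic orbits $\gamma_i$ of torsion class and action greater than $c$, then take $p$ prime with $p\epsilon>a$ and $pc>\widetilde c_{\HZ}^{\,o}(U,Z,a)$; since $p$ is prime and $\gamma_{\mathrm{new}}$ is contractible, any nontrivial iterate would be $\gamma^p$ for a one-periodic orbit $\gamma$ of $H$ with $[\gamma]^p=\one$, hence of torsion class, and with action greater than $c$, so $\gamma$ is some $\gamma_i$, which the action bound excludes. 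This is exactly the paper's argument.
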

\begin{proof}
Suppose that $\widetilde{c}_{\HZ}^{\,o}(U,Z,a)$ is finite for some $a\in(0,\infty]$. Then by \eqref{e:cc}, $c^o_\HZ(U,Z)$ is also finite, and the almost existence theorem for contractible orbits follows from \cite[Theorem 2.14]{GG04}. This proves (a). 

To show (b), we assume that $a$ is finite. Let $H\in\widetilde\HH(U,Z)$ be as in the statement, and let $\gamma_1,\dots,\gamma_m$ be all one-periodic orbits of $H$ with torsion free-homotopy classes and with action greater than $-\min H$. We choose $\epsilon>0$ such that 
\[
\AA_H(\gamma_i)\geq -\min H+\epsilon \qquad \forall i\in\{1,\dots, m\}.
\] 
For $p\in\N$, we define the $p$-th iteration $H^{\natural p}\colon S^1\x W\to\R$ of $H$ by $H^{\natural p}(t,z):= pH(pt,z)$ so that the Hamiltonian flow of $H^{\natural p}$ and that of $H$ are related by $\phi_{H^{\natural p}}^t=\phi_H^{pt}$. Thus one-periodic orbits of $H^{\natural p}$ can be viewed as $p$-periodic orbits of $H$. If $\gamma\colon S^1\to W$ is a one-periodic orbit of $H$, then its $p$-th iteration $\gamma^p\colon S^1\to W$ defined by $\gamma^p(t):=\gamma(pt)$ is a one-periodic orbit of $H^{\natural p}$. Moreover there obviously holds $\AA_{H^{\natural p}}(\gamma^p)=p\AA_H(\gamma)$. Let now $p$ be a prime number so large that 
\[
\epsilon p> a,\qquad -\min H^{\natural p}=-p\min H>\widetilde c_{\HZ}^o(U,Z,a).
\] 
By the definition of $\tilde c_{\HZ}^o(U,Z,a)$, $H^{\natural p}$ has a contractible one-periodic orbit $\gamma_\mathrm{new}$ such that 
\[
-\min H^{\natural p}<\AA_{H^{\natural p}}(\gamma_\mathrm{new})\leq-\min H^{\natural p}+a.
\] 
The first inequality shows that $\gamma_\mathrm{new}$ is non-constant. The latter one yields that $\gamma_\mathrm{new}$ is non-iterated. Indeed if $\gamma_\mathrm{new}$ is iterated, then it has to be the $p$-th iteration of $\gamma_i$ for some $1\leq i\leq m$ and this is absurd since
\[
\AA_{H^{\natural p}}(\gamma_i^p)=p\AA_{H}(\gamma_i)\geq p(-\min H+\epsilon)>-\min H^{\natural p}+a,\qquad \forall i=1,\ldots,m.
\]
This finishes the proof of (b).
\end{proof}

We are now ready to state our main result.
It says that the $\pi_1$-sensitive Hofer--Zehnder capacity of a Liouville domain relative to its skeleton can be bounded by the capacity obtained from positive symplectic homology in the contractible class.

\begin{thm}\label{main_thm}
For every Liouville domain $W$, there holds
\[
\tilde{c}_\HZ^{\,o}(W,W_\sk,a)\leq c_{\SH^+}(W) \qquad \forall a\in\big[c_{\SH^+}(W),\infty\big].
\]
Hence if $c_{\SH^+}(W)$ is finite, or equivalently $\SH^+(W;\one)$ is non-zero, then the same conclusion as in Proposition \ref{p:finite} holds for $(U,Z)=(W,W_\sk)$.
\end{thm}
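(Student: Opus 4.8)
The plan is to establish the inequality $\tilde c_\HZ^{\,o}(W,W_\sk,a)\le c_{\SH^+}(W)$ by contradiction: assume there is $H\in\widetilde{\mathcal H}(W,W_\sk)$ with $-\min H > c_{\SH^+}(W)$ and such that every contractible one-periodic orbit $x$ of $H$ has action $\mathcal A_H(x)\notin(-\min H,-\min H+a]$, and derive a contradiction with the definition of $c_{\SH^+}(W)$ as the minimal filtration level at which a nonzero class appears in $\SH^+(W;\one)$. The first step is reduction to a Hamiltonian defined on (the completion of) $W$: since $H$ vanishes near $\p W$ and near $S^1\times\p W$, one may regard $H$ as a compactly supported, admissible time-dependent Hamiltonian on $\widehat W$, after which one works with filtered Floer homology $\HF^{(b_1,b_2)}(H)$ and its positive part. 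The key comparison is an isomorphism relating the filtered positive Floer homology of a suitable cofinal deformation of $H$ to $\SH^+(W;\one)$ in an appropriate action window; this is the standard continuation/cofinality machinery for symplectic homology that I would cite from Section \ref{sec:Floer_homology}.

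The heart of the argument is the following action-window squeeze. Set $c:=-\min H$ and $m:=c_{\SH^+}(W)<c$. The condition on $H$ says the positive part of the Floer complex of $H$ has no generators with action in the interval $(c,c+a]$. Using the sandwich $H^- \le H \le H^+$ by radial (autonomous, $C^2$-small or linear-at-infinity) Hamiltonians together with the monotonicity of filtered Floer homology under such homotopies, I would show that the map
\[
\HF^{(\epsilon,\,c)}_+(H^-)\longrightarrow \HF^{(\epsilon,\,c+a)}_+(H^+)
\]
factors through $\HF^{(\epsilon,\,c)}_+(H)\cong \HF^{(\epsilon,\,c+a)}_+(H)$ (the last isomorphism because there are no generators in the gap $(c,c+a]$ and $a\ge m$ is large enough to absorb the action shift coming from the homotopy — this is exactly why the hypothesis $a\in[c_{\SH^+}(W),\infty]$ appears). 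On the other hand, since $m<c$, the direct limit defining $\SH^+(W;\one)$ already sees a nonzero class below level $c$; tracing this class through the above factorization forces a nonzero generator of $\HF_+(H)$ of action in $(-\min H, -\min H + a]$, equivalently a non-constant contractible one-periodic orbit of $H$ with action in that window — contradicting the choice of $H$. The second assertion of the theorem is then immediate: finiteness of $c_{\SH^+}(W)$ gives $\tilde c_\HZ^{\,o}(W,W_\sk,a)<\infty$ for $a=c_{\SH^+}(W)$, and Proposition \ref{p:finite} applies verbatim with $(U,Z)=(W,W_\sk)$.

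The main obstacle I anticipate is bookkeeping the action shifts precisely enough to make the window $(c,c+a]$ do its job. One must choose the radial comparison Hamiltonians $H^\pm$ and the homotopies between them so that the induced shifts in the action filtration are controlled by $c_{\SH^+}(W)$ — that the relevant Reeb periods one picks up are $\ge m$ and that the "new" orbit genuinely has positive action type (i.e. is not a critical point of the Morse part), which is where positivity of symplectic homology, rather than full symplectic homology, is essential. A secondary subtlety is handling the fact that $W_\sk$ may be complicated (not a point or a smooth submanifold): the neighborhood of $W_\sk$ on which $H=\min H$ must be related to a genuine sublevel region of the Liouville flow so that the Floer-theoretic comparison with $\SH^+(W;\one)$ is legitimate; this is a routine but necessary use of the symplectomorphism $\Psi\colon\p W\times(0,1]\to W\setminus W_\sk$ from the excerpt. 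Once these identifications are in place, the contradiction is formal.
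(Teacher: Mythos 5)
Your strategy is the same as the paper's: sandwich $H$ between radial Hamiltonians, use continuation maps to factor the structure map $\iota_\epsilon^{a,\infty}\colon\SH^{(\epsilon,a)}(W)\to\SH^+(W)$ through a piece of the filtered Floer homology of $H$, and observe that the contradiction hypothesis (no contractible one-periodic orbits of $H$ with action in $(-\min H,\,-\min H+a)$) forces that piece to vanish, contradicting $\iota_\epsilon^{a,\infty}\neq 0$ for $a>c_{\SH^+}(W)$. The paper isolates exactly this step as Proposition~\ref{prp:lower_bound}, which gives the stronger quantitative bound $\#\Gamma\geq\rk\iota_\epsilon^{a,\infty}$; the reduction to $a=c_{\SH^+}(W)$ plus the closedness of $\mathrm{spec}(H)$ and nowhere-density of $\mathrm{spec}(R_\lambda)$ then supply the needed open action window. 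So the route is the same; only the packaging differs.

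That said, the action bookkeeping in your factorization is off in a way you should fix. You write a continuation arrow $\HF^{(\epsilon,c)}_+(H^-)\to\HF^{(\epsilon,c+a)}_+(H^+)$ with two \emph{different} Floer action windows, but monotone-homotopy continuation maps preserve the action window; they do not enlarge it. In the paper's diagram~\eqref{eq:diagram} all three Floer groups use the \emph{same} window $(-\min H+\epsilon/2,\,-\min H+a)$, and the two \emph{different} symplectic-homology windows $\SH^{(\epsilon,a)}$ and $\SH^{(\epsilon,b)}$ on the right-hand side arise from the different \emph{slopes} of the radial comparison Hamiltonians $f_a$ and $k_b$, not from different Floer windows. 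Moreover, your window $(\epsilon,c)$ with $c=-\min H$ sits \emph{below} the action level of the constant orbits on $W_\sk$ (which is exactly $c$) and therefore is not the window that captures the non-constant orbits you are after; the relevant window must lie strictly above $c$, as in $(-\min H+\epsilon/2,\,-\min H+a)$. Finally, the isomorphism $\HF^{(\epsilon,c)}_+(H)\cong\HF^{(\epsilon,c+a)}_+(H)$ is also not what is needed: with a single common window, the contradiction hypothesis makes $\HF^{(-\min H+\epsilon/2,\,-\min H+a)}(\widehat H)$ vanish outright, and then the factorization $\Phi_3=\Phi_2\circ\Phi_1$ kills $\iota_\epsilon^{a,b}$ and, in the limit $b\to\infty$, $\iota_\epsilon^{a,\infty}$. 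Once you replace your two mismatched windows by one common window above $-\min H$ and attribute the two SH windows to the two slopes, your argument coincides with the paper's.
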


\begin{rmk}
The hypothesis  $\SH^+(W;\one)\neq 0$ in Theorem \ref{main_thm} is indispensable. For example, if $W$ is a fiberwise starshaped domain in $T^*S^1$, then $\SH^+(W;\one)=0$ and none of (a) and (b) in Proposition \ref{p:finite} is true.
\end{rmk}

\begin{rmk}
For $\delta>0$, let
\begin{equation}\label{e:M_delta}
W^{\delta}:=\phi_Y^{\log \delta}(W).	
\end{equation}
Then the proof of Theorem \ref{main_thm} actually shows that for any $\delta\in(0,1]$, 
\[
\widetilde{c}_\HZ^{\,o}(W,W^{\delta},a)\leq (1-\delta) c_{\SH^+}(W)\qquad  \forall a\in \big[(1-\delta) c_{\SH^+}(W),\infty\big]
\]
and this subsumes Theorem \ref{main_thm} since $W_\sk=\bigcap_{\delta>0}W^\delta$.
\end{rmk}

Theorem \ref{main_thm} will follow from Proposition \ref{prp:lower_bound}, which provides a lower bound on the number of contractible one-periodic orbits of $H\in\widetilde\HH(W,W_\sk)$ with action in a certain interval in terms of positive symplectic homology. Combining Theorem \ref{main_thm} with the isomorphism \eqref{e:viterbo-iso}, we immediately obtain the following corollary in $T^*Q$.

\begin{cor}\label{c:cot}
Let $Q$ be a closed manifold such that $\H(\mathcal L_\one Q,Q)$ is non-zero. 
\begin{enumerate}[(a)]
	\item The almost existence theorem for contractible orbits holds for every hypersurface of $T^*Q$ bounding a compact region containing the zero-section.
	\item Every compactly supported smooth Hamiltonian $H\colon S^1\x T^*Q\to \R$ with  $H=\min H<0$ on $S^1\x U$, where $U$ is a neighborhood of the zero-section, has infinitely many distinct non-constant, contractible periodic orbits with action greater than $-\min H$. \hfill\qed
\end{enumerate}
\end{cor}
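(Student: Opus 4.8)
The plan is to deduce both statements from Theorem~\ref{main_thm} and Proposition~\ref{p:finite} applied to a suitable bounded fiberwise starshaped domain of $T^*Q$. First I would fix the relevant compact data --- in~(a) a hypersurface $\Sigma\subset T^*Q$ together with the compact region $K$ it bounds, with the zero-section $0_Q$ contained in $\inn K$; in~(b) a Hamiltonian $H$ as in the statement --- and use compactness to choose a bounded fiberwise starshaped domain $W\subset T^*Q$, for instance a sufficiently large disc cotangent bundle for some auxiliary Riemannian metric, whose interior contains $K$ in case~(a) and $\mathrm{supp}\,H$ in case~(b). Then $(W,\lambda)$ is a Liouville domain with completion $T^*Q$ whose skeleton $W_\sk$ is the zero-section $0_Q\cong Q$ (as recalled in Section~\ref{ss:HZ}), and the inclusion $W\into T^*Q$ is a homotopy equivalence, so ``contractible in $W$'' and ``contractible in $T^*Q$'' coincide.

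The crucial step is to show $c_{\SH^+}(W)<\infty$, equivalently $\SH^+(W;\one)\ne 0$. I would obtain this from the Viterbo isomorphism \eqref{e:viterbo-iso} in the class $\one$: under it the Morse summand of $\SH(W;\one)$ corresponds to the constant loops $Q\into\mathcal L_\one Q$, so that the long exact sequence relating the Morse complex of $(W,\p W)$, $\SH(W;\one)$ and $\SH^+(W;\one)$ is intertwined with the long exact sequence of the pair $(\mathcal L_\one Q,Q)$; hence $\SH^+(W;\one)\cong\H(\mathcal L_\one Q,Q)$ up to a degree shift. By hypothesis this group is non-zero, so $\SH^+(W;\one)\ne 0$ and $c_{\SH^+}(W)<\infty$.

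Now Theorem~\ref{main_thm} gives $\widetilde{c}_{\HZ}^{\,o}(W,W_\sk,a)\le c_{\SH^+}(W)<\infty$ for $a:=c_{\SH^+}(W)<\infty$, so Proposition~\ref{p:finite} applies to $(U,Z)=(W,W_\sk)$. Part~(a) of the Corollary is then immediate from Proposition~\ref{p:finite}(a), since $\Sigma$ is a hypersurface of $W\setminus\p W$ bounding the compact region $K$, which contains $W_\sk=0_Q$ in its interior. For part~(b), I would check that $H|_{S^1\x W}\in\widetilde{\mathcal H}(W,W_\sk)$ --- it is compactly supported in $S^1\x(W\setminus\p W)$ and equals $\min H<0$ on a neighbourhood of $S^1\x W_\sk$ --- and apply Proposition~\ref{p:finite}(b); reading off its proof, the orbits produced are non-constant, contractible, and form an infinite family of pairwise distinct periodic orbits, each of action exceeding $-\min H$ (the $p$-periodic orbit of the prime argument has action $>-p\min H$; in the complementary case one finds them among the one-periodic orbits directly, passing if necessary to $k$-fold iterates of one-periodic orbits in non-trivial torsion classes, of action $k\,\mathcal A_H(\cdot)>-\min H$).

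The one genuinely non-routine ingredient is the identification $\SH^+(W;\one)\cong\H(\mathcal L_\one Q,Q)$: it requires knowing that \eqref{e:viterbo-iso} is a morphism of the two long exact sequences above, equivalently that it carries the action filtration to the length/energy filtration and the low-action (Morse) part of $\SH(W;\one)$ to the constant loops. This compatibility is part of the body of work cited for \eqref{e:viterbo-iso}; the rest of the argument is bookkeeping with the definitions of the various capacities.
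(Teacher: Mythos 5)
Your proposal is correct and follows exactly the route the paper intends (the paper only remarks that the corollary follows "immediately" from Theorem~\ref{main_thm} and the Viterbo isomorphism~\eqref{e:viterbo-iso}): choose a large enough disc cotangent bundle $W$, identify $\SH^+(W;\one)$ with $\H(\mathcal L_\one Q,Q)$ via the filtered Viterbo isomorphism and the two intertwined long exact sequences, and then apply Theorem~\ref{main_thm} together with Proposition~\ref{p:finite}. Your explicit handling of the two cases in Proposition~\ref{p:finite}(b) (iterating orbits in non-trivial torsion classes when there are infinitely many, and using the prime-period argument otherwise) and your check that all resulting orbits have action exceeding $-\min H$ correctly fill in details the paper leaves implicit.
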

\begin{rmk}\label{r:rot}
Let $Q$ be simply connected. By the theory of minimal models of Sullivan \cite{Sul75,VPS76}, the group $\H(\mathcal L_\one Q,Q)$ is infinite dimensional and, in particular, non-zero. As observed by Thomas Rot in a MathOverflow post \cite{Rot}, using the (relative) Hurewicz theorem and the long exact sequence of the pair $(\mathcal L_\one Q,Q)$ in homology and homotopy, one can show that $\H_{k-1}(\mathcal L_\one Q,Q)\neq0$, if $k$ is the smallest integer such that $\pi_k(Q)\neq0$. When $\pi_1(Q)\cong\Z$ conditions on the homotopy groups of $Q$ ensuring $\H(\mathcal L_\one Q,Q)\neq0$ are given in \cite[Corollary 1.9]{AGKM}.
\end{rmk}

\begin{rmk}
The corollary finds application also to exact twisted cotangent bundles. Let $\di\theta$ be an exact two-form on a closed manifold $Q$ and consider the twisted cotangent bundle $(T^*Q,\omega_{T^*Q}+\pi^*(\di\theta))$, where $\omega_{T^*Q}$ is the canonical symplectic form on $T^*Q$ and $\pi$ is the foot-point projection $\pi\colon T^*Q\to Q$. If $\H(\mathcal L_\one Q,Q)\neq0$, then the statements in Corollary \ref{c:cot} hold by replacing the zero-section with the graph of the one-form $\theta$.
\end{rmk}
If $D^*Q$ is the unit disc cotangent bundle of a Riemannian metric $g$, the exact value of $c_{\SH^+}(D^*Q)$ and of $\tilde c_{\HZ}^o(D^*Q,Q,a)$ for $a\geq c_{\SH^+}(D^*Q)$ can be computed via Viterbo isomorphism if we know the homology of $\mathcal L_\one Q$ filtered by the square root of the $g$-energy of loops sufficiently well. For instance, applying a theorem of Ziller \cite{Zil} and Lemma \ref{l:bir=sys} contained in the appendix of the present paper written by Abbondandolo and Mazzucchelli, we get the following statement.
\begin{cor}\label{c:cross}

Let $Q$ be a closed manifold endowed with a Riemannian metric $g$. Let $D^*Q$ be the unit-disc cotangent bundle of $g$ and denote by $\ell_\one$ the length of the shortest non-constant contractible geodesic for the metric. If $(Q,g)$ is a compact, non-aspherical homogeneous space (for instance a compact rank one symmetric space) or $(Q,g)$ is a two-sphere with positive curvature, then there holds
\[
\ell_\one=\tilde c_{\HZ}^o(D^*Q,Q,a)=c_{\SH^+}(D^*Q), \qquad \forall a\in\big[c_{\SH^+}(D^*Q),\infty\big].
\]
\end{cor}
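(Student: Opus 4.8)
The goal is to compute $c_{\SH^+}(D^*Q)$ and the relative capacity $\tilde c_{\HZ}^o(D^*Q,Q,a)$ in the two listed cases by squeezing them between $\ell_\one$ from above and below. The upper bound is already essentially in place: the filtered Viterbo isomorphism gives $c_{\SH^+}(D^*Q)=c_{\SH^+}(D^*Q;\one)$ equal to the smallest filtration value at which a nonzero class in $\H(\mathcal L_\one Q, Q)$ appears, where the filtration on the loop space is by the square root of the $g$-energy (equivalently, by length on critical points), and then Theorem \ref{main_thm} yields $\tilde c_{\HZ}^o(D^*Q,Q,a)\leq c_{\SH^+}(D^*Q)$ for every $a\geq c_{\SH^+}(D^*Q)$. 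The lower bound $\ell_\one\leq \tilde c_{\HZ}^o(D^*Q,Q,a)$ should come from a direct construction of admissible Hamiltonians: using the standard fiberwise-quadratic/autonomous Hamiltonian supported near $D^*Q$ whose one-periodic orbits of short period correspond to geodesics of length $<\ell_\one$, one sees no non-constant contractible one-periodic orbit appears below level $\ell_\one$, so the capacity is at least $\ell_\one$; this is exactly the kind of estimate encoded in \eqref{eq:min_HZ} with $\alpha=\one$, namely $\min\mathrm{spec}(R_\lambda;\one)=\ell_\one\leq c_{\HZ}^o(D^*Q,Q)\leq \tilde c_{\HZ}^o(D^*Q,Q,a)$. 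So the whole corollary reduces to the single equality $c_{\SH^+}(D^*Q)=\ell_\one$, i.e.\ to showing that the first nonzero class of $\H(\mathcal L_\one Q,Q)$ sits exactly at filtration level $\ell_\one$.

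For this I would argue that in both cases the shortest non-constant closed geodesic in the contractible class is actually a \emph{minimum} of the energy functional on a non-trivial connected component-free but topologically essential piece of $\mathcal L_\one Q$, so that it generates (rather than kills) homology of the pair $(\mathcal L_\one Q, Q)$ at its own filtration level. Concretely: for a compact non-aspherical homogeneous space $(Q,g)$ one invokes Ziller's theorem \cite{Zil}, which describes the closed geodesics on such spaces and in particular gives that the shortest contractible one is non-degenerate (or Morse--Bott non-degenerate along its orbit) with index such that it contributes a nonzero class to $\H_*(\mathcal L_\one Q, Q)$ at exactly sublevel $\ell_\one$ and not below; for the positively curved two-sphere one uses Lemma \ref{l:bir=sys} from the appendix, which identifies the relevant length scale (the ``birth'' value $\bir$ equals the systole $\sys$), to conclude that the first homology class of the pair again appears precisely at $\ell_\one$. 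In both instances the point is that nothing in $\H(\mathcal L_\one Q,Q)$ is born strictly below $\ell_\one$: any loop of square-root-energy (hence length after reparametrization) less than $\ell_\one$ either is constant or can be contracted into $Q$ by a length-non-increasing deformation, because there is no critical point of positive critical value below $\ell_\one$ and the negative gradient flow of the energy retracts the corresponding sublevel set onto $Q$.

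The main obstacle I expect is precisely this ``no earlier birth'' claim for the positively curved $S^2$: a priori $\H(\mathcal L_\one S^2,S^2)$ could receive contributions from high-index critical points of the energy at small length, and one must rule this out. This is where the appendix is doing the real work — the monotonicity of systoles of convex two-spheres in $\R^3$ (and the identity $\bir=\sys$ of Lemma \ref{l:bir=sys}) is exactly the statement that the shortest closed geodesic controls the first topologically visible length scale of the loop space, so that the Morse-theoretic sublevel set $\{\sqrt{E}<\ell_\one\}$ deformation retracts onto the constants $Q$. For homogeneous spaces this is classical via Ziller's explicit computation of the geodesic flow and the resulting rational/$\Z_2$ homology of $\mathcal L_\one Q$, so there the obstacle is merely bookkeeping: match Ziller's filtration-by-length data with the definition of $c_{\SH^+}$ in \eqref{eq:sh+} and observe that the bottom class occurs at $\ell_\one$. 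Once $c_{\SH^+}(D^*Q)=\ell_\one$ is established, the chain $\ell_\one\leq c_{\HZ}^o(D^*Q,Q)\leq \tilde c_{\HZ}^o(D^*Q,Q,a)\leq c_{\SH^+}(D^*Q)=\ell_\one$ closes up and forces all three quantities to equal $\ell_\one$, for every admissible $a$.
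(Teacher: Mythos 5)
Your overall strategy is the right one and matches the paper's: reduce everything to the single equality $c_{\SH^+}(D^*Q)=\ell_\one$ by squeezing, using \eqref{eq:min_HZ} and \eqref{e:cc} for the lower bound $\ell_\one\leq c_{\HZ}^o(D^*Q,Q)\leq\tilde c_{\HZ}^o(D^*Q,Q,a)$, Theorem \ref{main_thm} for the upper bound $\tilde c_{\HZ}^o\leq c_{\SH^+}$, and the filtered Viterbo isomorphism to turn $c_{\SH^+}(D^*Q)$ into the loop-space quantity $c(\mathcal E)=\inf\{a>0\mid\jmath_\epsilon^{a,\infty}\neq0\}$.

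However, you have the two directions of the equality $c(\mathcal E)=\ell_\one$ backwards, and this leads you to misattribute the roles of Ziller's theorem and of the appendix. The direction $c(\mathcal E)\geq\ell_\one$ (``no earlier birth'') is the \emph{trivial} one: since $\mathcal E$ has no critical points of positive value below $\ell_\one$, negative gradient flow retracts $\{\mathcal E<a\}$ onto $Q$ for $a<\ell_\one$, so $\H^{(\epsilon,a)}(\mathcal L_\one Q)=0$; this requires neither Ziller nor the appendix, and it is the loop-space analogue of the statement $\ell_\one=\min\mathrm{spec}(R_\lambda;\one)\leq c_{\SH^+}(D^*Q)$ which the paper dispatches by ``constructing a suitable radial Hamiltonian.'' The genuinely hard direction is $c(\mathcal E)\leq\ell_\one$: one must show that the homology born at filtration level $\ell_\one$ actually \emph{survives} the map $\jmath_\epsilon^{\ell_\one+\epsilon,\infty}$ to $\H(\mathcal L_\one Q,Q)$, i.e.\ is not killed at a higher filtration level. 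For homogeneous non-aspherical $Q$ this is exactly what \cite[Theorem 5]{Zil} supplies. For the positively curved two-sphere this is exactly what Lemma \ref{l:bir=sys} supplies: the identity $\bir=\sys$ is proved by building a degree-one sweep-out $u\colon[-1,1]\to\{\mathcal E\leq\ell_\one\}$ with endpoints in $Q$, which gives a non-trivial class in $\H_1(\mathcal L_\one Q,Q)$ already represented at level $\ell_\one$. So the appendix does \emph{not} prove that $\{\sqrt E<\ell_\one\}$ deformation retracts onto $Q$ (that is elementary); it proves the opposite inequality, namely the existence of the surviving class at $\ell_\one$. If you fix this attribution the argument goes through, but as written the paragraph identifying ``the main obstacle'' mislocates the difficulty and would not convince a reader that the claimed references do the work you ascribe to them.
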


Finally, we can adapt \cite[Theorem 3.2]{GG04} to obtain weaker statements on the existence of periodic orbits for $H$ belonging to a class larger than $\widetilde{\mathcal H}(W,W_\sk)$ of functions that are allowed to be time-dependent also on $W_\sk$. The definition of the Floer homology $\HF$ and the canonical map $\iota_{-\infty}^{\epsilon,\infty}$ is given in Section \ref{sec:Floer_homology}.

\begin{thm}\label{thm:nonautonomous}
Let $(W,\lambda)$ be a Liouville domain, and let $H\colon S^1\x W\to\R$ be a Hamiltonian which is supported in $S^1\times(W\setminus\p W)$ and satisfies $\max_{S^1\x W_\sk} H<0$.
\begin{enumerate}[(a)]
		\item For every small $a>0$, there holds 
	\[
	\rk \HF^{(a,\infty)}(H) \geq \rk\left[\iota_{-\infty}^{\epsilon,\infty}\colon\H(W,\p W)\to \SH(W;\one)\right].
	\]
	In particular, if $\SH(W;\one)$ is nonzero, then $H$ has a contractible one-periodic orbit with positive action. 
		\item Assume in addition that $ \max_{S^1\x W} H=0$. For every small $a>0$, there exists a surjective homomorphism
	\[
	\HF^{(a,\infty)}(H)\longrightarrow \H(W,\p W).
	\]
	In particular, $H$ has a contractible one-periodic orbit with positive action.
	\end{enumerate}
\end{thm}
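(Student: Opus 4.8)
The plan is to adapt the sandwiching argument of \cite[Theorem 3.2]{GG04} to the Liouville setting, feeding in the Floer‑homological content of $\SH(W;\one)$ and of the canonical map $\iota_{-\infty}^{\epsilon,\infty}$ in place of the capacity hypothesis used there. Throughout, $\HF^{(a,\infty)}(H)$ is the homology of the quotient complex $\CF(H)/\CF^{(-\infty,a]}(H)$ for a regular value $a$ of the action, and I use that a monotone continuation map associated to $K_-\leq K_+$ strictly decreases the action along non-constant cylinders, hence respects sublevels and descends to the truncated quotients. I also use the standard facts that a compactly supported $H$ on $\widehat W$ has $\HF(H)\cong\H(W,\p W)$ via continuation to a $C^2$-small Morse function (the homotopy keeps slope $0$, which lies below the shortest Reeb period, so the continuation is an isomorphism), that $\varinjlim_\tau\HF(K_\tau)=\SH(W;\one)$ over a cofinal family with slopes $\tau\notin\mathrm{spec}(R_\lambda)$, and that $\iota_{-\infty}^{\epsilon,\infty}$ is the map $\varinjlim_\tau\big(\HF^{(-\infty,\epsilon)}(K_\tau)\to\HF(K_\tau)\big)$.

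\emph{Part (a).} Since $H$ is compactly supported in $S^1\x(W\setminus\p W)$ and $\max_{S^1\x W_\sk}H<0$, fix $\epsilon_0>0$ and a neighbourhood $V\supset W_\sk$ with $H|_{S^1\x V}<-\epsilon_0$, and take $a$ smaller than $\epsilon_0$ and than the shortest Reeb period. Choose an autonomous $G\in\mathcal H(W,W_\sk)$ which is a deep well: $G\equiv\min G$ on $V$ with $-\min G$ arbitrarily large, $G$ vanishes near $\p W$, and $G\leq H$ everywhere (possible because off a neighbourhood of $W_\sk$ one may also push $G$ below $H$). All one-periodic orbits of $G$ are constants of action $\geq-\min G>a$, so $\HF^{(a,\infty)}(G)=\HF(G)\cong\H(W,\p W)$. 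Choose also a cofinal family $\{K_\tau\}$ with $K_\tau\geq H$, linear of slope $\tau\notin\mathrm{spec}(R_\lambda)$ at infinity, and $C^2$-small near $W_\sk$. The inequalities $G\leq H\leq K_\tau$ give filtered continuation maps; unfiltered, the composite $\HF(G)\to\HF(H)\to\varinjlim_\tau\HF(K_\tau)$ is $\iota_{-\infty}^{\epsilon,\infty}$ under the canonical identifications, and $\HF(G)\to\HF(H)$ is an isomorphism. Hence, writing $q\colon\HF(H)\to\HF^{(a,\infty)}(H)$ for the truncation map, the composite $\HF(G)\to\HF^{(a,\infty)}(H)$ has rank $\rk q=\dim\H(W,\p W)-\rk\big(\HF^{(-\infty,a]}(H)\to\HF(H)\big)$, and it remains to prove $\rk\big(\HF^{(-\infty,a]}(H)\to\HF(H)\big)\leq\dim\ker\iota_{-\infty}^{\epsilon,\infty}$. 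This is where $\max_{S^1\x W_\sk}H<0$ enters: every one-periodic orbit of $H$ meeting $V$ has action $>\epsilon_0>a$, so the truncated-away orbits all live in the region between $V$ and $\p W$, where $H$ is compactly supported and behaves like a perturbation of $0$; comparing $\HF^{(-\infty,a]}(H)$ with the corresponding low-action part of $\{K_\tau\}$ identifies the image of $\HF^{(-\infty,a]}(H)\to\HF(H)\cong\H(W,\p W)$ with a subspace of $\ker\iota_{-\infty}^{\epsilon,\infty}$, giving the inequality. The ``in particular'' then follows since $\SH(W;\one)\neq0$ forces its ring unit, which lies in $\mathrm{im}\,\iota_{-\infty}^{\epsilon,\infty}$, to be non-zero, so $\rk\iota_{-\infty}^{\epsilon,\infty}\geq1$.

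\emph{Part (b).} The extra assumption $\max_{S^1\x W}H=0$ together with compact support gives $H\leq0$ everywhere, with $H=0$ on an open set containing a collar of $\p W$. One now runs a dual comparison: choose $K_\tau\geq H$ with $K_\tau\leq0$ on $W$, $C^2$-small and vanishing exactly on $\{H=0\}$, slope $\tau$ at infinity, so that $\varinjlim_\tau\HF(K_\tau)=\SH(W;\one)$ and the low-action part of the cofinal family recovers $\H(W,\p W)$ unscathed. Combining the surjective continuation map $\HF(H)\to\H(W,\p W)$ produced by the deep-well argument above with the truncation long exact sequence for $H$, and using $H\leq0$ to see that every one-periodic orbit of $H$ of action $\leq a$ maps trivially to $\H(W,\p W)$ under it, one gets that this surjection factors through $\HF^{(a,\infty)}(H)$, whence $\HF^{(a,\infty)}(H)\twoheadrightarrow\H(W,\p W)$. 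Since $\H(W,\p W)$ always contains the fundamental class in degree $2n$, it is non-zero, so $\HF^{(a,\infty)}(H)\neq0$ and $H$ has a contractible one-periodic orbit of positive action.

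\emph{Main obstacle.} The crux in both parts is the action-filtration bookkeeping: the many auxiliary data — the shapes of $G$ and of the cofinal family $\{K_\tau\}$, the auxiliary perturbations, the level $a$ — must be chosen mutually compatibly so that the truncation at the small level $a$ loses neither the class detected by $\iota_{-\infty}^{\epsilon,\infty}$ in (a) nor the fundamental class in (b). Concretely this amounts to pinning down $\HF^{(-\infty,a]}(H)$ and its image in $\HF(H)\cong\H(W,\p W)$ — equivalently, controlling the one-periodic orbits of $H$ of small positive action, which can only occur in the uncontrolled region between $W_\sk$ and $\p W$ — and matching them with the low-action part of symplectic homology; the two sign hypotheses $\max_{S^1\x W_\sk}H<0$ and (in (b)) $\max_{S^1\x W}H=0$ are precisely what make this matching work.
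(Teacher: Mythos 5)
Your general strategy---sandwich $H$ between two model Hamiltonians and compare truncated Floer homologies via monotone continuation maps---is the right one and matches the paper in spirit, but the construction of the lower Hamiltonian $G$ is wrong, and this breaks the key step.

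You take $G$ to be a ``deep well'' that equals $\min G$ on a neighbourhood of $W_\sk$, vanishes near $\p W$, and has $-\min G$ arbitrarily large. You then claim that all one-periodic orbits of $G$ are constants of action $\geq -\min G>a$, so that $\HF^{(a,\infty)}(G)=\HF(G)$. This is false on two counts. First, near $\p W$ the Hamiltonian $G\equiv 0$, so the constants there have action $0$, not $-\min G$, and they are therefore killed by the truncation at level $a>0$. Second, and more seriously, the transition from the very deep value $\min G$ up to $0$ inside $W$ forces the radial derivative of $G$ to exceed $\min\mathrm{spec}(R_\lambda)$ somewhere, and thus produces non-constant Reeb-type one-periodic orbits whose actions are not controlled. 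Consequently $\HF^{(a,\infty)}(G)$ is not $\H(W,\p W)$, and the equality $\rk(\HF(G)\to\HF^{(a,\infty)}(H))=\rk q$ that you deduce from it collapses. The remaining step you flag as needed, namely that the image of $\HF^{(-\infty,a]}(H)\to\HF(H)$ sits inside $\ker\iota_{-\infty}^{\epsilon,\infty}$, is left as a vague ``comparison'' and is precisely the part that the paper's cleaner choice of model Hamiltonians avoids having to address at all.

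The fix is to pick the lower model as in the paper: take $f$ to be the constant $-c$ on \emph{all} of $W$ (not vanishing near the boundary) and of small slope $\epsilon<\min\mathrm{spec}(R_\lambda)$ on $\widehat W\setminus W$, with $c$ large enough that $f\leq\widehat H$. Then every one-periodic orbit of $f$ is a constant of action exactly $c>a$ and there are no Reeb-type orbits (the slope is below the first Reeb period), so $\HF^{(a,\infty)}(f)=\HF(f)\cong\H(W,\p W)$ genuinely. Similarly take the upper model $k$ to be $-d$ on a small $W^\delta\supset W_\sk$ (using $\max_{S^1\times W_\sk}H<0$ to ensure $\widehat H\leq k$) and of large slope $b$ outside, so that again $\HF^{(a,\infty)}(k)=\HF(k)\cong\SH^{(-\infty,b)}(W)$. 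The continuation $\HF^{(a,\infty)}(f)\to\HF^{(a,\infty)}(\widehat H)\to\HF^{(a,\infty)}(k)$ then represents $\iota_{-\infty}^{\epsilon,b}$ directly, giving $\rk\HF^{(a,\infty)}(H)\geq\rk\iota_{-\infty}^{\epsilon,b}=\rk\iota_{-\infty}^{\epsilon,\infty}$ for $b$ large without ever touching $\HF^{(-\infty,a]}(H)$. Your part~(b) inherits the same flaw since it invokes ``the surjective continuation map produced by the deep-well argument above''; the correct route there is to sandwich $\widehat H$ between $k-c$ and $k$ for a single small-slope $k$, observe that neither of these has orbits of action $\leq a$, and conclude that the continuation $\HF^{(a,\infty)}(k-c)\to\HF^{(a,\infty)}(k)$, which factors through $\HF^{(a,\infty)}(\widehat H)$, is an isomorphism because $k$ and $k-c$ share the same slope.
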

\subsection*{Organization of the paper}
In Section \ref{sec:Floer_homology}, we recall the precise definition of Hamiltonian Floer homology, of symplectic homology, of the associated capacities. At the end of the section, a proof of Theorem \ref{t:cgamma} is given. In Section \ref{sec:proofs} we prove Theorem \ref{main_thm},  Theorem \ref{thm:nonautonomous}, and Corollary \ref{c:cross}. Appendix \ref{appendix}, written by Abbondandolo and Mazzucchelli, shows the monotonicity of the systoles for convex Riemannian two-spheres in $\R^3$. In doing so, they prove Lemma \ref{l:bir=sys} which is needed in Corollary \ref{c:cross}.
\subsection*{Acknowledgements} G.~Benedetti acknowledges funding by the Deutsche Forschungsgemeinschaft (DFG, German Research Foundation) through the projects: 281869850 (RTG 2229); 390900948 (EXC-2181/1); 281071066 (SFB/TRR 191). J.~Kang was supported by Research Resettlement Fund for the new faculty of Seoul National University and by TJ Park Science Fellowship of POSCO TJ Park Foundation. We are grateful to Thomas Rot for discussions around Remark \ref{r:rot}, to Pierre-Alexandre Mailhot and Egor Shelukhin for pointing out a mistake in a previous version of the manuscript, and to Viktor Ginzburg and Kei Irie for their feedback on the paper. 

\section{Floer homologies and symplectic capacities }\label{sec:Floer_homology}
In this section we define the capacities given by symplectic homology and prove Theorem \ref{t:cgamma}. Prior to this we give a concise construction of Floer homology and refer to \cite{CFH95,Sal99,Vit99,BPS03,GG04,Web06} for details.

\subsection{Hamiltonian Floer homology}\label{sec:Hamiltonian}
For a Liouville domain $(W,\lambda)$, let $(\widehat W,\hat\lambda)$ be its completion defined in \eqref{eq:completion} and $W^\delta\subset \widehat W$ for $\delta>0$ be the subset given in \eqref{e:M_delta}. We consider a smooth Hamiltonian $H\colon S^1\x \widehat W\to\R$ with
\begin{equation}\label{eq:H_infty}
H(t,r,x)= \tau r + \eta \qquad (t,r,x)\in  S^1\x (\widehat W\setminus W^\delta)=S^1\x (\delta,\infty)\x\p W
\end{equation}
for some $\delta>0$, $\tau\in(0,\infty)\setminus\mathrm{spec}(R_\lambda)$ and $\eta\in\R$. The constant $\tau$ is called the slope of $H$. The action spectrum $\mathrm{spec}(H)$ is the set of action values of all critical points of $\AA_H$. This is a compact, nowhere dense subset of $\R$. Let $a\leq b$ be numbers in $\overline{\R}:=\R\cup\{-\infty,\infty\}$ not belonging to $\mathrm{spec}(H)$. We denote by $\PP_1^{(a,b)}(H;\alpha)$ the set of one-periodic orbits of $H$ with free-homotopy class $\alpha$ and with action in $(a,b)$. Suppose that all elements in $\PP_1^{(a,b)}(H;\alpha)$ are nondegenerate. Then this is a finite set due to $\tau\notin\mathrm{spec}(R_\lambda)$.   The Floer chain group is
\[
\mathrm{CF}^{(a,b)}(H;\alpha):=\bigoplus_{\substack{x\in\PP_1^{(a,b)}(H;\alpha)}}  \Z_2\langle x\rangle .
\]
Let $J$ be a smooth $S^1$-family of almost complex structures on $\widehat W$ with the property that $\diff\hat\lambda(\cdot,J(t,u)\cdot)$ is an inner product on $T_u\widehat W$ for all $(t,u)\in S^1\times \widehat W$ and satisfying 
\begin{equation}\label{e:J}
J^*\hat\lambda=\di r, \qquad \text{on }\{r\geq\delta_0\}
\end{equation}
for some $\delta_0>0$ such that all element in $\PP_1^{(a,b)}(H;\alpha)$ are contained in the interior of $W^{\delta_0}$. 
For $x,y\in\PP_1(H;\alpha)$, we denote by $\mathcal{M}(x,y)$ the moduli space of Floer cylinders connecting $x$ and $y$, namely smooth solutions $u\colon\R\x S^1\to \widehat W$ of 
\begin{equation}\label{eq:Floer}
\p_su-J(t,u)\big(\p_tu-X_H(t,u)\big)=0,\qquad \lim_{s\to -\infty}u(s,\cdot)=x,\qquad \lim_{s\to +\infty}u(s,\cdot)=y,
\end{equation}
where $(s,t)\in\R\x S^1$ and $X_H$ is the Hamiltonian vector field of $H$ defined via the equation $\diff\hat\lambda(X_H,\cdot)=-\diff H$. Unless $x=y$, there is a free $\R$-action on $\mathcal{M}(x,y)$ by translating solutions in the $s$-direction. We define $n(x,y)$ as the parity of $\mathcal{M}(x,y)/\R$ if $x\neq y$ and it is a finite set. Otherwise we set $n(x,y)=0$. The differential $\p\colon\mathrm{CF}^{(a,b)}(H;\alpha)\to \mathrm{CF}^{(a,b)}(H;\alpha)$ is defined by the linear extension of the formula
\[
\qquad \p x := \sum_{\substack{{y\in \PP_1^{(a,b)}(H;\alpha)}}} n(x,y)y.
\]
For a generic choice of $J$, we indeed have $\p\circ \p=0$ and denote the Floer homology of $H$ with action-window $(a,b)\subset{\R}$ and with free homotopy class $\alpha\in[S^1,\widehat W]$ by
\[
\HF^{(a,b)}(H;\alpha):=\H(\mathrm{CF}^{(a,b)}(H;\alpha),\p).
\]
Simplifying the notation we denote $\HF(H;\alpha)=\HF^{(-\infty,\infty)}(H;\alpha)$.
As the notation indicates, a different choice of $J$ produces an isomorphic homology via a continuation homomorphism.

Given $a<b<c$ not belonging to $\mathrm{spec}(H)$, the exact sequence of chain complexes
\[
0 \longrightarrow \CF^{(a,b)}(H;\alpha)\longrightarrow \CF^{(a,c)}(H;\alpha) \longrightarrow  \CF^{(b,c)}(H;\alpha) \longrightarrow 0
\]
induced by natural inclusion and projection gives rise to the long exact sequence 
 \begin{equation}\label{eq:les1}
\cdots\stackrel{\delta}{\to} \HF^{(a,b)}(H;\alpha)\stackrel{\iota}{\to} \HF^{(a,c)}(H;\alpha) \stackrel{\pi}{\to} \HF^{(b,c)}(H;\alpha) \stackrel{\delta}{\to} \HF^{(a,b)}(H;\alpha) \stackrel{\iota}{\to} \cdots.
\end{equation}
Let $H,K\colon S^1\x\widehat W\to\R$ be two smooth Hamiltonians with the aforementioned properties and $H\leq K$. We choose a smooth monotone homotopy $H_s$, $s\in\R$ from $H$ to $K$, namely
\[
H_s=H \quad  \forall s\leq -1,\qquad H_s=K\quad \forall s\geq 1,\qquad \p_s H_s\geq0\quad \forall s\in\R,
\]  
and $H_s$ has a constant slope, see \eqref{eq:H_infty}, for every $s\in\R$.
Consider the moduli space of solutions of \eqref{eq:Floer} with $H$ replaced by $H_s$ and define the continuation homomorphism 
\begin{equation}\label{eq:continuation}
\Phi=\Phi_{H,K}^{(a,b)}\colon\HF^{(a,b)}(H;\alpha)\longrightarrow \HF^{(a,b)}(K;\alpha)	
\end{equation}
in an analogous way to defining the differential. Another choice of monotone homotopy produces the same continuation homomorphism. Moreover the map $\Phi$ induces a commuting map from the exact sequence \eqref{eq:les1} for $H$ to that for $K$.
If we consider another smooth Hamiltonian $G\colon S^1\x\widehat W\to\R$ satisfying $K\leq G$, then we have continuation homomorphisms $\Phi_{K,G}^{(a,b)}$ and $\Phi_{H,G}^{(a,b)}$ and there holds $\Phi_{H,G}^{(a,b)}=\Phi_{K,G}^{(a,b)}\circ\Phi_{H,K}^{(a,b)}$. In the case that $H$ and $K$ have the same slope and $(a,b)=(-\infty,\infty)$, continuation homomorphisms $\Phi_{H,K}=\Phi_{H,K}^{(-\infty,\infty)}$ and $\Phi_{K,H}$ are still defined and satisfy $\Phi_{H,K}\circ \Phi_{K,H}=\Phi_{K,K}=\mathrm{Id}$ and $\Phi_{K,H}\circ \Phi_{H,K}=\Phi_{H,H}=\mathrm{Id}$. Thus $\HF(H;\alpha)\cong\HF(K;\alpha)$ for $H$ and $K$ with the same slope.

In fact the above construction extends to smooth Hamiltonians $H\colon S^1\x \widehat W\to\R$ such that elements $\PP_1^{(a,b)}(H;\alpha)$ are not necessarily nondegenerate. The nondegeneracy condition can be achieved by a small compact perturbation $K$ of $H$. Moreover the Floer homology $\HF^{(a,b)}(K;\alpha)$ is independent of the choice of a small perturbation up to continuation isomorphisms. To be precise, if $G$ is another small compact perturbation of $H$, then we have continuation homomorphisms $\Phi^{(a,b)}_{K,G}$ and $\Phi^{(a,b)}_{G,K}$ which are inverse to each other. Here it is crucial that $a,b\notin\mathrm{spec}(H)$, and $K$ and $G$ have the same slope.
Thus we set $\HF^{(a,b)}(H;\alpha):=\HF^{(a,b)}(K;\alpha)$.

Next we define the Floer homology of $H\colon S^1\x W\to\R$ with support in $S^1\times (W\setminus\p W)$. We choose $\delta_1\in(0,1)$ such that $H=0$ on $W\setminus W^{\delta_1}$. Then we smoothly extend $H$ to $\widehat  H\colon S^1\x\widehat W\to \R$ to satisfy
\begin{itemize}
	\item $\widehat H=H$ on $W\setminus W^{\delta_2}$ for some $\delta_2\in(\delta_1,1)$;
	\item $\widehat H(t,r,x)=h(r)$ on $(t,r,x)\in\widehat W\setminus W^{\delta_1}=S^1\x (\delta_1,\infty)\x\p W$ where $h\colon(\delta_1,\infty)\to\R$ is a smooth convex function;
	\item $h'(r)=\epsilon$ for some $0<\epsilon<\min\mathrm{spec}(R_\lambda)$ on $\widehat W\setminus W$.
\end{itemize}
Then we define the Floer homology of $H$ as that of $\widehat H$:
\begin{equation}\label{eq:fh_cpt_supp}
\HF^{(a,b)}(H;\alpha):=\HF^{(a,b)}(\widehat H;\alpha)	
\end{equation}
where $a,b\in\overline{\R}\setminus\mathrm{spec}(H)$ as usual. Due to the choice of slope $\epsilon$, $\PP_1(H)=\PP_1(\widehat H)$ and furthermore the definition \eqref{eq:fh_cpt_supp} is independent of the choice of $\widehat H$.

Finally, we make an action computation that will be repeatedly used. 
Let $H\colon S^1\x W\to\R$ be such that there exist $\delta>0$ and a smooth function $h\colon(\delta,\infty)\to\R$ with the property that $H=h$ on $S^1\x (W\setminus W^\delta)=S^1\x (\delta,\infty)\x \p W$. In this case, the action of one-periodic orbits $x$ of $H$ located on $\p W^r$ for $r\in (\delta,\infty)$ is explicitly computed as 
\begin{equation}\label{e:action_computation}
	\AA_H(x)= rh'(r)-h(r),
\end{equation} 
which is minus the $y$-intercept of the tangent line of $h$ at $r$.
\subsection{Symplectic homology}
Let $\HH^{a,b}$ for $a\leq b$ in $\overline\R$ be the set of smooth Hamiltonians $H\colon S^1\x\widehat W\to\R$ satisfying $a,b\notin\mathrm{spec}(H)$, $H|_{S^1\x W}<0$, and \eqref{eq:H_infty} for some $\delta\geq1$, $\tau\in(0,\infty)\setminus\mathrm{spec}(R_\lambda)$, and $\eta\in\R$.  We endow $\HH^{a,b}$ with the partial relation $\leq$ given by the pointwise inequality so that for every $H,K\in \HH^{a,b}$ with $H\leq K$, we have the continuation homomorphism defined in \eqref{eq:continuation}. Floer homology groups of elements in $\HH^{a,b}$ together with continuation homomorphisms form a direct system, and the direct limit is called the symplectic homology of $W$:
\[
\SH^{(a,b)}(W;\alpha):=\varinjlim_{H\in\HH^{a,b}}\HF^{(a,b)}(H;\alpha).
\]
We remark that the symplectic homology changes only when the action-window crosses $\mathrm{spec}(R_\lambda)$, i.e.~$\SH^{(a,b)}(W;\alpha)\cong\SH^{(a',b')}(W;\alpha)$ if $(a,b)\cap\mathrm{spec}(R_\lambda)=(a',b')\cap \mathrm{spec}(R_\lambda)$. Let $\epsilon$ denote a constant such that $0<\epsilon<\min\mathrm{spec}(R_\lambda)$. Thus $(-\infty,\epsilon)\cap \mathrm{spec}(R_\lambda)=\varnothing$, and there holds
\begin{equation}\label{eq:SH_epsilon}
\SH^{(-\infty,\epsilon)}(W;\alpha)\cong 
\left\{ \begin{aligned} & \H(W,\p W) \;\;&\alpha=\one\,, \\[0.5ex]
 	& \; 0 & \alpha\neq\one\,,
\end{aligned}\right.
\end{equation}
where $\H(W,\p W)$ is the relative homology of the pair $(W,\p W)$. We denote
\[
\SH(W;\alpha):=\SH^{(-\infty,+\infty)}(W;\alpha),\qquad \SH^+(W;\alpha)=\SH^{(\epsilon,\infty)}(W;\alpha).
\]
For any $a\leq b\leq c$ in $\overline{\R}\setminus\mathrm{spec}(R_\lambda)$, the exact sequence \eqref{eq:les1} leads to the exact sequence
\begin{equation}\label{eq:les_SH}
\cdots\stackrel{\delta}{\to} \SH^{(a,b)}(W;\alpha)\stackrel{\iota}{\to} \SH^{(a,c)}(W;\alpha) \stackrel{\pi}{\to} \SH^{(b,c)}(W;\alpha) \stackrel{\delta}{\to} \SH^{(a,b)}(W;\alpha)\stackrel{\iota}{\to} \cdots.
\end{equation}
We decorate $\iota$ to indicate involved action-windows as follows:
\begin{equation}\label{eq:iota}
\iota_a^{b,c}:\SH^{(a,b)}(W;\alpha)\longrightarrow \SH^{(a,c)}(W;\alpha).	
\end{equation}
This map is functorial in the sense that $\iota_a^{c,d}\circ\iota_a^{b,c}=\iota_a^{b,d}$ holds for any $d\geq c$. Indeed, the map $\iota$ in \eqref{eq:les1} defined for each $H$ has such a property and is compatible with the continuation homomorphism in \eqref{eq:continuation}. Thus the desired functorial property for  symplectic homology follows.
Applying the exact sequence \eqref{eq:les_SH} to $(a,b,c)=(-\infty,\epsilon,\infty)$, we deduce
\[
\SH^+(W;\alpha)\cong\SH(W;\alpha),\qquad \alpha\neq\one
\]
and 
\begin{align*}
\dim \SH(W;\one)=\infty \qquad&\Longleftrightarrow\qquad \dim \SH^+(W;\one)=\infty,\\
\SH(W;\one)=0\qquad &\Longrightarrow\qquad \SH^+(W;\one)\cong \H(W;\p W).
\end{align*}

Symplectic homologies with finite action-window and the homomorphisms in \eqref{eq:iota} can be interpreted as Floer homologies of suitably chosen Hamiltonians and continuation homomorphisms between them. For $a\in(0,\infty)\setminus\mathrm{spec}(R_\lambda)$ we consider the set $\mathcal G_a$ of smooth functions $g_a\colon\widehat W\to \R$ such that there are positive numbers $\epsilon',\delta,c$ with $\delta<1$ depending on $g_a$ with
\begin{itemize}
	\item $g_a=-\epsilon'$ on $W^{1-\delta}$;
	\item on $\widehat W\setminus W^{1-\delta}$, the function $g_a$ depends only on $r$ and there holds $g''_a(r)\geq0$;
	\item $g_a=a(r-1)-c$ on $\widehat W\setminus W$.
\end{itemize}
A non-constant one-periodic orbit of $g_a$ sits in $\p W^r$ for $r>1-\delta$ such that $g'(r)\in\mathrm{spec}(R_\lambda)$, and corresponds to a closed Reeb orbit of $(\p W,\lambda|_{\p W})$ with period $g'(r)$. 
If we consider the piecewise linear function
\[
\bar g_a\colon\widehat W\to\R,\qquad \bar g_a|_W=0,\quad \bar g_a|_{\widehat W\setminus W}=a(r-1),
\] 
then choosing $\epsilon',\delta,c$ small enough, the function $g_a$ can be arbitrarily $C^0$-close to $\bar g_a$ on $\widehat W$ and $C^\infty$-close to $\bar g_a$ away from $\p W$, and furthermore the action of a non-constant one-periodic orbit of $g_a$ can be arbitrarily close to the period of the corresponding Reeb orbit by \eqref{e:action_computation}.

\begin{lem}\label{lem:isom}
Let $\epsilon,a,b$ be real numbers such that 
\[
0<\epsilon<\min\mathrm{spec}(R_\lambda)<a<b,\qquad  a,b\notin\mathrm{spec}(R_\lambda).
\] 
There exist $g_a\in\mathcal G_a$ and $g_b\in\mathcal G_b$ which can be taken arbitrarily $C^0$-close to $\bar g_a$ and $\bar g_b$ respectively such that the following diagram commutes
\begin{equation}\label{e:comlem}
\begin{gathered}
	\xymatrix{
\HF^{(\epsilon,b)}(g_b;\alpha) \ar^{\cong}_{\phi}[r]  & \SH^{(\epsilon,b)}(W;\alpha)\\
\HF^{(\epsilon,b)}(g_a;\alpha) \ar[u]_{\Phi} \ar@/^-1.5pc/@[][ru]_-{\phi}    &
	\\
\HF^{(\epsilon,a)}(g_a;\alpha) \ar[u]_{\iota}^{\cong} \ar[r]_{\phi}^{\cong} &  \SH^{(\epsilon,a)}(W;\alpha).\ar_{\iota_{\epsilon}^{a,b}}[uu]
	}
\end{gathered}
	\end{equation}
Here $\Phi$ is a continuation homomorphism, $\iota$ is a homomorphism from \eqref{eq:les1}, $\iota^{a,b}_\epsilon$ is from \eqref{eq:iota}, and the maps $\phi$ are homomorphisms in the direct system. 
\end{lem}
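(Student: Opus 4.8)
The strategy is to build the diagram \eqref{e:comlem} by choosing the approximating Hamiltonians $g_a, g_b$ carefully so that all three vertical maps become identifiable with geometrically meaningful operations, and then to verify commutativity of each of the three subdiagrams (the outer square, the upper triangle, and the lower square) separately. First I would recall that $\SH^{(\epsilon,b)}(W;\alpha)$ is the direct limit over $\HH^{\epsilon,b}$ of Floer homologies, and that by the stabilization remark ``symplectic homology changes only when the action-window crosses $\mathrm{spec}(R_\lambda)$'' one may compute $\SH^{(\epsilon,c)}(W;\alpha)$ for $c\in\{a,b\}$ as the Floer homology $\HF^{(\epsilon,c)}(g;\alpha)$ of any single $g\in\mathcal{G}_{c'}$ whose slope $c'$ is large enough that $\mathrm{spec}(R_\lambda)\cap(0,c')$ already contains all of $\mathrm{spec}(R_\lambda)\cap(0,c)$; indeed, increasing the slope further or passing to cofinal families of general $H\in\HH^{\epsilon,c}$ induces continuation isomorphisms, because the new non-constant one-periodic orbits that appear have action $\geq$ their Reeb period $> c$ and so are invisible in the window $(\epsilon,c)$. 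This is exactly the content that makes the bottom horizontal map $\phi\colon\HF^{(\epsilon,a)}(g_a;\alpha)\to\SH^{(\epsilon,a)}(W;\alpha)$ and the top horizontal map $\phi\colon\HF^{(\epsilon,b)}(g_b;\alpha)\to\SH^{(\epsilon,b)}(W;\alpha)$ isomorphisms, provided $g_a$ has slope in $(\min\mathrm{spec}(R_\lambda),a)$ close to the bottom of the spectrum — wait, more carefully: we need $g_a$ of slope $a$ itself (an element of $\mathcal{G}_a$), and the point is that $a,b\notin\mathrm{spec}(R_\lambda)$ guarantees the windows $(\epsilon,a)$, $(\epsilon,b)$ have endpoints off the spectrum so the direct-limit maps are isomorphisms on the nose.

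Next I would construct $g_b$ from $g_a$ by a \emph{monotone} homotopy: choose $g_a\in\mathcal{G}_a$ first (with its associated small parameters $\epsilon',\delta,c$), and then choose $g_b\in\mathcal{G}_b$ with $g_b\geq g_a$ pointwise, which is possible since $b>a$ forces the linear part $b(r-1)-c_b$ to dominate $a(r-1)-c_a$ for $r$ large once $c_b$ is taken small, and near $W$ one can interpolate keeping convexity and $g_b\geq g_a$; both can be kept $C^0$-close to $\bar g_b$ and $\bar g_a$. This gives the continuation map $\Phi\colon\HF^{(\epsilon,b)}(g_a;\alpha)\to\HF^{(\epsilon,b)}(g_b;\alpha)$ as the left vertical arrow in the upper square. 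The triangle then commutes because both legs are direct-system maps into the limit $\SH^{(\epsilon,b)}(W;\alpha)$: the map $\phi$ out of $\HF^{(\epsilon,b)}(g_a;\alpha)$ factors as $\phi\circ\Phi$ by definition of the maps in a direct limit (a continuation homomorphism in the defining direct system is compatible with the structure maps $\phi$), which is precisely the statement $\phi = \phi\circ\Phi$ drawn as the curved arrow. The map $\iota\colon\HF^{(\epsilon,a)}(g_a;\alpha)\to\HF^{(\epsilon,b)}(g_a;\alpha)$ is the one from the long exact sequence \eqref{eq:les1} applied with triple $(\epsilon,a,b)$; it is an isomorphism because the intermediate piece $\HF^{(a,b)}(g_a;\alpha)$ vanishes — the one-periodic orbits of $g_a$ with action in $(a,b)$ would correspond to Reeb orbits of period landing in that range, but $g_a$ has slope exactly $a$ so by the action formula \eqref{e:action_computation} all its non-constant orbits have action at most (something close to) the largest spectral value below $a$, hence none lie in $(a,b)$; one must take the $C^0$-approximation tight enough that this holds, which is where the freedom to shrink $\epsilon',\delta,c$ is used.

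Finally, the lower square: its bottom arrow $\iota_\epsilon^{a,b}$ is the symplectic-homology map from \eqref{eq:iota}, which by the functoriality paragraph is induced on the direct limit by the maps $\iota$ from \eqref{eq:les1} at each Hamiltonian; so once we know $\phi$ intertwines $\iota$ (at the level of $g_a$) with $\iota_\epsilon^{a,b}$ — which is exactly the compatibility of the long exact sequence with continuation maps asserted earlier (``the map $\Phi$ induces a commuting map from the exact sequence \eqref{eq:les1} for $H$ to that for $K$'') — the square commutes. The outer square then follows formally from the triangle and the two inner squares. The main obstacle I anticipate is purely technical rather than conceptual: arranging a single monotone homotopy $g_a\leq g_b$ within the class $\mathcal{G}_\bullet$ that keeps \emph{both} the convexity condition $g''\geq 0$ outside $W^{1-\delta}$ and the prescribed linear behavior on $\widehat W\setminus W$, while simultaneously keeping each $g$ $C^0$-close to its piecewise-linear model $\bar g$ and ensuring the ``no orbits in $(a,b)$'' vanishing for $g_a$; these constraints pull the parameters $\epsilon',\delta,c$ in slightly competing directions, so the construction has to fix $a$'s data first and then build $b$'s data relative to it. Everything else is bookkeeping with the already-established functoriality of $\iota_a^{b,c}$ and of continuation homomorphisms, plus the stability of $\SH^{(a,b)}$ under changes of action-window not crossing $\mathrm{spec}(R_\lambda)$.
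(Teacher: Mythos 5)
Your proposal follows essentially the same route as the paper's proof: both establish that the direct-limit maps $\phi$ are isomorphisms by exhibiting a cofinal monotone sequence in $\mathcal G_\bullet$ along which the continuation maps on the fixed action window are isomorphisms (the paper invokes \cite[Lemma 2.8]{Web06} for this), both observe that $\iota$ is an isomorphism because $g_a$, having slope $a$ with $a\notin\mathrm{spec}(R_\lambda)$ and parameters chosen small, has no one-periodic orbits with action above $a$ so that $\HF^{(a,b)}(g_a;\alpha)=0$, and both reduce commutativity to the functoriality of $\iota$ under continuation maps and the defining compatibilities of the direct system. The one place where you are slightly looser than the paper is the assertion that passing to cofinal families in $\HH^{\epsilon,c}$ ``induces continuation isomorphisms because the new orbits have action $\geq$ their Reeb period $> c$'': this is not automatic for arbitrary $H\in\HH^{\epsilon,c}$ and is exactly why the paper restricts to a specific cofinal sequence $(g_{a_i})\subset\mathcal G_{a_i}$ connected by monotone homotopies $(g^s)$ that avoid crossing the action values $\epsilon$ and $a$; your proposal implicitly uses the same control via the freedom to shrink $\epsilon',\delta,c$, so the gap is one of explicitness rather than substance.
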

\begin{proof}
For any increasing sequence $(a_i)_{i\in\N}\subset(0,\infty)\setminus\mathrm{spec}(R_\lambda)$ with $a_1=a$, we choose a sequence of functions $g_{a_i}\in\mathcal G_{a_i}$ such that $(g_{a_i})$ is cofinal in $\HH^{\epsilon,a}$ with $g_{i+1}\geq g_i$ and for each $i\in\N$ there is a monotone homotopy $(g^s)_{s\in[0,1]}$ from $g_{a_i}$ to $g_{a_{i+1}}$ with the property that for all $s\in[0,1]$, the function $g^s$ has no one-periodic orbit $x$ with $\mathcal A_{g^s}(x)\in\{\epsilon,a\}$. Then the continuation homomorphism induced by $(g^s)$, 
\[
\Phi\colon\HF^{(\epsilon,a)}(g_{a_i};\alpha) \stackrel{\cong}{\longrightarrow}   \HF^{(\epsilon,a)}(g_{a_{i+1}};\alpha)
\]
	is an isomorphism, see for instance \cite[Lemma 2.8]{Web06}, and thus the lower horizontal arrow in \eqref{e:comlem} is an isomorphism. The same argument shows that the upper horizontal map is also an isomorphism.
	Moreover, the map $\iota$ is an isomorphism since $g_{a_1}=g_a$ does not have one-periodic orbits with action greater than $a$. Finally, the commutativity of the diagram follows from the definitions of the involved homomorphisms.
\end{proof}
\begin{rmk}\label{r:isom}
The statement of Lemma \ref{lem:isom} holds \textit{mutatis mutandis} with $\epsilon$ replaced by $-\infty$ and $a<b$ any pair of positive numbers not in $\mathrm{spec}(R_\lambda)$.
\end{rmk}
\subsection{Capacities from Floer and symplectic homology}\label{sec:SH_capacity}
We now define the capacities mentioned in Section \ref{sec:intro}. To define the spectral invariant, we take a function $f\colon\widehat W\to\R$ in $\mathcal G_\epsilon$ where $0<\epsilon<\min\mathrm{spec}(R_\lambda)$. As discussed above, we have 
\[
\HF(f;\one)\cong \SH^{(-\infty,\epsilon)}(W;\one) \cong \H(W,\p W).
\]
We denote by $e_f\in \HF(f;\one)$ the homology class corresponding to the fundamental class in $\H(W,\p W)$ through the above isomorphism. Let $H\colon S^1\times W\to\R$ be a smooth Hamiltonian  supported in $S^1\times( W\setminus\p W)$ whose Floer homology is defined as in \eqref{eq:fh_cpt_supp}. For $a\in\R\setminus \mathrm{spec}(H)$, we consider the chain of homomorphisms
\begin{equation}\label{e:fa}
\HF(f;\one)\xrightarrow{\Phi_{f,H}}\HF(H;\one)\stackrel{\pi_a}{\longrightarrow}\HF^{(a,\infty)}(H;\one),
\end{equation}
where $\Phi_{f,H}$ is a continuation homomorphism, which is in fact an isomorphism since $f$ and $H$ have the same slope. The map $\pi_a$ is from \eqref{eq:les1}. The spectral invariant of $H$ is defined by
\[
\sigma(H):=\inf\{a\ |\ \pi_a\circ\Phi_{f,H}(e_f)=0\}.
\]
The spectral capacity $c_\sigma(W)$ of $W$ is defined by the supremum of $\sigma(H)$ over all smooth Hamiltonians $H\colon S^1\x W\to\R $ supported in $S^1\x(W\setminus\p W)$.
\medskip

 Due to \eqref{eq:SH_epsilon}, we can view the homomorphism $\iota^{\epsilon,c}_{-\infty}$ defined in \eqref{eq:iota} as a map
\[
\iota^{\epsilon,c}_{-\infty}\colon\H(W,\p W)\longrightarrow \SH^{(-\infty,c)}(W;\one).
\]
We consider the number
\begin{equation}\label{eq:c_SH}
c_{\SH}(W):=\inf\big\{c>0\ \big|\  \iota^{\epsilon,c}_{-\infty}=0\big\}\in(0,\infty].
\end{equation}
We note that due to functoriality $\iota_{-\infty}^{\epsilon,c}=0$ for any $c>c_{\SH}(W)$.
It is known that $\SH(W;\one)$ admits a ring structure with unit given by the image of the fundamental class of $\H(W,\p W)$ under $\iota_{-\infty}^{\epsilon,\infty}$, thus the quantity $c_{\SH}(W)$ is finite if and only if $\SH(W;\one)$ vanishes. Using positive symplectic homology we can also define the quantity 
\begin{equation}\label{eq:sh+}
c_{\SH^+}(W;\alpha):=\inf\{c>0\ |\ \iota_{\epsilon}^{c,\infty}\colon\SH^{(\epsilon,c)}(W;\alpha)\to \SH^+(W;\alpha)\text{ is nonzero}\}\in(0,\infty].
\end{equation}
It is finite if and only if $\SH^+(W;\alpha)\neq0$. This is equivalent to $\SH(W;\alpha)\neq0$ for $\alpha\neq \one$ and to $\SH(W;\one)\not\cong \H(W,\p W)$ for $\alpha=\one$ due to \eqref{eq:les_SH}. We use the notation $c_{\SH^+}(W)=c_{\SH^+}(W;\one)$. 

\begin{lem}\label{lem:c_SH=c_SH_+}
There holds $c_{\SH}(W)\geq c_{\SH^+}(W)$. Moreover the equality holds if $\SH(W;\one)=0$ and $\rk\H(W,\p W)=1$. 
\end{lem}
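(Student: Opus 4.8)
The plan is to deduce the whole statement from a single commuting square of symplectic homology groups coming from the inclusion of filtered Floer complexes, together with the identification $\SH^{(-\infty,\epsilon)}(W;\one)\cong\H(W,\p W)$ of \eqref{eq:SH_epsilon}; recall here that $\H(W,\p W)\neq 0$, since $W$ is a nonempty compact manifold with boundary and hence its top relative homology with $\Z_2$-coefficients is non-zero. Throughout, $c$ is understood to range over $(0,\infty)\setminus\mathrm{spec}(R_\lambda)$, which is immaterial for the infima in \eqref{eq:c_SH} and \eqref{eq:sh+} because $\mathrm{spec}(R_\lambda)$ is closed and nowhere dense.

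First I would set up the square. For an admissible Hamiltonian of sufficiently large slope, the natural inclusions $\CF^{(-\infty,c)}\hookrightarrow\CF^{(-\infty,\infty)}$ and $\CF^{(\epsilon,c)}\hookrightarrow\CF^{(\epsilon,\infty)}$, together with the identity on $\CF^{(-\infty,\epsilon)}$, form a morphism of short exact sequences of chain complexes from $0\to\CF^{(-\infty,\epsilon)}\to\CF^{(-\infty,c)}\to\CF^{(\epsilon,c)}\to 0$ to $0\to\CF^{(-\infty,\epsilon)}\to\CF^{(-\infty,\infty)}\to\CF^{(\epsilon,\infty)}\to 0$. Since these maps commute with the continuation homomorphisms, taking homology, passing to the direct limit over $\HH^{a,b}$, and invoking naturality of the connecting homomorphism produces the commuting square
\[
\xymatrix{
\SH^{(\epsilon,c)}(W;\one) \ar[r]^-{\delta_c} \ar[d]_{\iota_\epsilon^{c,\infty}} & \SH^{(-\infty,\epsilon)}(W;\one) \ar@{=}[d] \\
\SH^+(W;\one) \ar[r]^-{\delta_\infty} & \SH^{(-\infty,\epsilon)}(W;\one),
}
\]
that is $\delta_\infty\circ\iota_\epsilon^{c,\infty}=\delta_c$, where $\delta_c$ and $\delta_\infty$ are the connecting maps of \eqref{eq:les_SH} for the triples $(-\infty,\epsilon,c)$ and $(-\infty,\epsilon,\infty)$, and we have used $\SH^+(W;\one)=\SH^{(\epsilon,\infty)}(W;\one)$.

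With the square in hand, the first inequality drops out. If $\iota_{-\infty}^{\epsilon,c}=0$, then exactness of \eqref{eq:les_SH} for $(-\infty,\epsilon,c)$ makes $\delta_c$ surjective, hence $\delta_\infty\circ\iota_\epsilon^{c,\infty}=\delta_c$ surjects onto $\H(W,\p W)\neq 0$, so $\iota_\epsilon^{c,\infty}\neq 0$; thus $\{c:\iota_{-\infty}^{\epsilon,c}=0\}\subseteq\{c:\iota_\epsilon^{c,\infty}\neq 0\}$, and passing to infima yields $c_{\SH^+}(W)\leq c_{\SH}(W)$. For the equality, assume $\SH(W;\one)=0$ and $\rk\H(W,\p W)=1$: plugging $\SH^{(-\infty,\infty)}(W;\one)=0$ into \eqref{eq:les_SH} for $(-\infty,\epsilon,\infty)$ shows that $\delta_\infty$ is an isomorphism onto $\SH^{(-\infty,\epsilon)}(W;\one)\cong\H(W,\p W)$, so $\SH^+(W;\one)$ is one-dimensional over $\Z_2$. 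Then any non-zero $\iota_\epsilon^{c,\infty}$ is automatically surjective, hence so is $\delta_c=\delta_\infty\circ\iota_\epsilon^{c,\infty}$, and exactness of \eqref{eq:les_SH} for $(-\infty,\epsilon,c)$ now forces $\iota_{-\infty}^{\epsilon,c}=0$; this reverse inclusion of sets gives $c_{\SH}(W)\leq c_{\SH^+}(W)$, so with the first part equality holds.

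The only step that deserves scrutiny is the commutativity of the displayed square, i.e.\ that $\iota_\epsilon^{c,\infty}$ and the identity intertwine the two connecting homomorphisms. This is just naturality of the connecting map for a morphism of short exact sequences of chain complexes, and it persists through the direct limit defining $\SH$ because every map involved commutes with continuation homomorphisms, as already recorded in the excerpt for \eqref{eq:les1}, \eqref{eq:les_SH} and \eqref{eq:iota}; everything else is reading off these exact sequences, so no further analytic input is required.
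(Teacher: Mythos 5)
Your argument is correct and is essentially the paper's proof: the commuting square you isolate is exactly the left square in the paper's ladder of long exact sequences for the triples $(-\infty,\epsilon,c)$ and $(-\infty,\epsilon,\infty)$, and both proofs rely on the same exactness arguments, the identification $\SH^{(-\infty,\epsilon)}(W;\one)\cong\H(W,\p W)\neq 0$, and, for the equality, the one-dimensionality of $\H(W,\p W)$ together with $\delta_\infty$ being an isomorphism when $\SH(W;\one)=0$.
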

\begin{proof}
This is an immediate consequence of the commutative diagram induced by \eqref{eq:les_SH}:
\[
\xymatrix{
\cdots \ar[r]& \SH^{(\epsilon,c)}(W;\one)\ar^{\iota_\epsilon^{c,\infty}}[d]\ar^{\delta_1}[r]& \H(W,\p W)\ar^-{\iota_{-\infty}^{\epsilon,c}}[r]\ar^{\iota_{-\infty}^{\epsilon,\epsilon}=\id}[d] & \SH^{(-\infty,c)}(W;\one)\ar^{\iota_{-\infty}^{c,\infty}}[d]\ar[r] & \cdots\\
\cdots \ar[r] & \SH^{+}(W;\one)\ar[r]^{\delta_2} & \H(W,\p W)\ar^-{\iota_{-\infty}^{\epsilon,\infty}}[r] & \SH(W;\one)\ar[r]&\cdots
}
\]
For any $c>c_{\SH}(W)$, we have $\iota_{-\infty}^{\epsilon,c}=0$. Thus $\delta_1$ and also $\iota_{\epsilon}^{c,\infty}$ are nonzero. This shows that $c_{\SH^+}(W)\leq c$, and hence $c_{\SH}(W)\geq c_{\SH^+}(W)$.

Suppose $\SH(W;\one)=0$ and $\rk\H(W,\p W)=1$. Then $\iota_{-\infty}^{\epsilon,c}= 0$ if and only if $\delta_1\neq 0$, and this  is equivalent also to $\iota_\epsilon^{c,\infty}\neq0$ since $\delta_2$ is an isomorphism. 
\end{proof}

Before showing the announced results about $c_{\SH^+}(W)$ in Section \ref{sec:proofs}, we prove Theorem \ref{t:cgamma}, which asserts $c_{\sigma}(W)\leq c_\SH(W)$.

\subsection*{Proof of Theorem \ref{t:cgamma}}
It suffices to show that $\sigma(H)\leq c_{\SH}(W)$ for every smooth Hamiltonian $H\colon S^1\x W\to\R$ with support in $S^1\x (W\setminus\p W)$ when $c_{\SH}(W)$ is finite. For any $a>c_{\SH}(W)$ not belonging to  $\mathrm{spec}(R_\lambda)\cup\mathrm{spec}(H)$, which is a closed nowhere dense set, we extend $H$ to a smooth function $\widetilde H\colon S^1\times \widehat W\to\R$ in the same manner as in defining $\widehat H$ in \eqref{eq:fh_cpt_supp} but with $h'(r)=\epsilon$ replaced by $h'(r)=a$. The homomorphisms in \eqref{e:fa} can be completed to a commutative diagram
\[
\xymatrix@R-1.5pc{
&\HF(\widetilde H;\one)\ar[r]^-{\widetilde\pi_a}&\HF^{(a,\infty)}(\widetilde H;\one)
\\
\HF(f;\one)\ar[dr]^{\Phi_{f,H}}\ar[ur]^{\Phi_{f,\widetilde H}}&&
\\
&\HF(H;\one)\ar[r]^-{\pi_a} \ar[uu]_{} &\HF^{(a,\infty)}(H;\one)\ar[uu]_{\Phi}^\cong
}
\]
where vertical arrows are continuation homomorphisms, and $\Phi$ is even an isomorphism since $H$ and $\widetilde H$ have the same 1-periodic orbits in the action window $(a,\infty)$. We claim 
\[
\Phi_{f,\widetilde H}(e_f)=0.
\]
Once the claim is verified, the diagram shows that $\pi_a\circ\Phi_{f,H}(e_f)=0$ which implies $\sigma(H)\leq a$ and hence $\sigma(H)\leq c_{\SH}(W)$ as we wanted. The claim now is a consequence of the fact that by Lemma \ref{lem:isom} and Remark \ref{r:isom}, there is a commutative diagram
\begin{equation}\label{e:com}
\begin{gathered}
	\xymatrix{
\HF(\widetilde H;\one)\ar[r]^-{\Phi_{\widetilde H,g}}_-{\cong}&\HF(g;\one)\ar[r]^-{}_-\cong&\SH^{(-\infty,a)}(W;\one)\\
&\HF(f;\one)\ar[lu]^{\Phi_{f,\widetilde H}}\ar[r]_-{\cong}\ar[u]^{\Phi_{f,g}}&\SH^{(-\infty,\epsilon)}(W;\one)\ar[u]_{\iota^{\epsilon,a}_{-\infty}}
}
\end{gathered}
\end{equation}
for some function $g\in\mathcal G_a$, where $\HF(g;\one)=\HF^{(-\infty,a)}(g;\one)$ and $\HF(f;\one)=\HF^{(-\infty,\epsilon)}(f;\one)$ since $g$ and $f$ have no one-periodic orbits outside the action windows $(-\infty,a)$ and $(-\infty,\epsilon)$, respectively, by \eqref{e:action_computation}. Notice that the triangular diagram is commutative since the maps involved are continuation maps and that the horizontal arrow $\Phi_{\widetilde H,g}$ is an isomorphism since $\widetilde H$ and $g$ have the same slope. Now, $\iota^{\epsilon,a}_{-\infty}=0$ since $a>c_{\SH}(W)$ and therefore the claim $\Phi_{f,\widetilde H}(e_f)=0$ follows by commutativity of the diagram.
\hfill\qed

\section{Proofs of the main results}\label{sec:proofs}
In this section, we will be working exclusively with contractible loops. Therefore, we will omit the symbol $\one$ from the notation to make formulas more readable and write for instance $\HF(H)$ and $\SH(W)$ for $\HF(H;\one)$ and $\SH(W;\one)$ respectively. We start by proving the following fundamental result, which is an adaptation of \cite[Proposition 5.2]{GG04} to our setting.

\begin{prp}\label{prp:lower_bound}
Let $H\in\widetilde\HH(W,W_\sk)$ and let $a\in(0,-\min H)\setminus\mathrm{spec}(R_\lambda)$. We assume that all elements of the set 
\[
\Gamma:=\{x\in\PP_1^o(H)\mid -\min H<\AA_H(x)<-\min H+a\}
\] 
are nondegenerate. Then there holds
\[
\#\Gamma\geq \rk \Big[\iota_\epsilon^{a,\infty}\colon\SH^{(\epsilon,a)}(W)\to \SH^+(W)\Big],
\]
where $0<\epsilon<\min\mathrm{spec}(R_\lambda)$ as usual.
\end{prp}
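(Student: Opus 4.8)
The plan is to run a sandwich argument comparing the Floer homology of $H$ with the symplectic homology of $W$ via monotone homotopies through carefully chosen autonomous profile functions, so that the counting of $1$-periodic orbits of $H$ with action in the window $(-\min H, -\min H + a)$ controls the rank of the map $\iota_\epsilon^{a,\infty}$. First I would normalize: set $c:=-\min H$, so that $H \in \widetilde{\mathcal H}(W,W_\sk)$ takes values in $[-c, 0]$ with $H \equiv -c$ near $S^1 \times W_\sk$ and $H \equiv 0$ near $S^1 \times \partial W$. The key geometric input is that since $H = -c$ on a neighborhood of the skeleton, one can squeeze a radially symmetric function $g_-$ from $\mathcal G_\epsilon$ (after shifting down by $c$, i.e.\ $g_- - c$) below $H$, and a radially symmetric slope-$a$ function $g_+$ related to $\bar g_a - c$ above $H$; both comparison functions are supported in the region where $\Psi$ identifies $W \setminus W_\sk$ with the symplectization, exactly as in the setup preceding Lemma~\ref{lem:isom}. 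The action shift by $-c$ is what moves the relevant window for the $g$'s (namely $(\epsilon, a)$ for $\SH$) to the window $(c, c+a) = (-\min H, -\min H + a)$ for $H$.

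Next I would assemble the continuation square. The monotone homotopies $g_- - c \leq H \leq g_+ - c$ (up to a further cofinal adjustment of $g_+$ inside $\mathcal G_a$ and its shift) induce continuation maps on Floer homology in the action window $(c, c+a)$, and I would check — using the action computation \eqref{e:action_computation} and the fact that the comparison functions have no $1$-periodic orbits with action on the boundary of the window — that the composite $\HF^{(c,c+a)}(g_- - c) \to \HF^{(c,c+a)}(H) \to \HF^{(c,c+a)}(g_+ - c)$ agrees with the canonical map on symplectic homology. Concretely, passing to the limit over a cofinal family and invoking Lemma~\ref{lem:isom} (with the shift by $c$, which merely translates the action filtration), this composite is identified with $\iota_\epsilon^{a,\infty}\colon \SH^{(\epsilon,a)}(W) \to \SH^+(W)$; here I use that $\SH^{(c,c+a)}$ computed from shifted Hamiltonians equals $\SH^{(\epsilon,a)}$ and $\SH^{(c,\infty)}$ equals $\SH^+$, since translating a Hamiltonian by a constant translates the whole action spectrum. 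The point is that the map on the chain level from $\HF^{(c,c+a)}(H)$ factors any such continuation composite, so
\[
\rk\big[\iota_\epsilon^{a,\infty}\big] \leq \rk \HF^{(c,c+a)}(H) \leq \dim \CF^{(c,c+a)}(H) = \#\Gamma,
\]
where the middle inequality is that homology rank is bounded by chain rank and the last equality is the nondegeneracy hypothesis on $\Gamma$. This is exactly the structure of the argument in \cite[Proposition 5.2]{GG04}, transplanted from the torus/symplectically aspherical setting to the Liouville domain completion.

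The main obstacle — and the step needing genuine care rather than bookkeeping — is ensuring that the squeezed comparison functions actually exist with $H$ strictly between them \emph{and} that the relevant continuation homotopies can be chosen so that no $1$-periodic orbit crosses the action-window endpoints $c$ and $c+a$ during the homotopy (this is what makes the boundary maps in the long exact sequence \eqref{eq:les1} vanish and the window-restricted continuation maps behave functorially and stay isomorphisms where claimed, cf.\ \cite[Lemma 2.8]{Web06}). The delicate point is that $H$ vanishes only near $\partial W$ and equals $-c$ only near $W_\sk$, so on the intermediate region $H$ can be genuinely time-dependent and wild; I would handle this by first enlarging slightly — replacing $W_\sk$ by $W^\delta$ for small $\delta$, as in Remark after Theorem~\ref{main_thm} — so that $H \equiv -c$ on all of $W^\delta$, giving room to interpolate to a radial function on $W^{\delta'} \setminus W^{\delta}$ below $H$, and similarly using that $H \equiv 0$ outside some $W^{\delta_1}$ to cap off above with a slope-$a$ radial function on the end; then let $\delta \to 0$. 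A secondary subtlety is the exactness/grading: since we work over $\Z_2$ and with the action filtration only (no use of a $\Z$-grading), the rank inequality $\rk \HF \le \dim \CF$ is immediate, so no orientation or index bookkeeping is required — but I would double-check that $a < c = -\min H$ (part of the hypothesis) is what guarantees the window $(c, c+a)$ lies above the action $c$ of the "constant" orbits coming from the minimum set, so that those do not contribute to $\CF^{(c,c+a)}(H)$ and do not inflate $\#\Gamma$ spuriously.
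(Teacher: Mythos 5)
Your overall framework --- sandwiching $H$ between autonomous radial profiles, working in the action window $(-\min H, -\min H+a)$, and bounding via $\rk\HF\le\dim\CF=\#\Gamma$ --- matches the paper's (which adapts \cite[Proposition 5.2]{GG04}), and you correctly flag the subtle points: the $W^\delta$ enlargement, the need for monotone homotopies to avoid the window endpoints, and the role of the hypothesis $a<-\min H$. However, your choice of the two sandwich functions is wrong in a way that breaks the key identification with $\iota_\epsilon^{a,\infty}$.

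The domain $\SH^{(\epsilon,a)}(W)$ is generated by Reeb orbits of period in $(\epsilon,a)$ and, by Lemma~\ref{lem:isom}, is computed by a Hamiltonian of slope $a$ in the window $(\epsilon,a)$. Your lower function $g_-\in\mathcal G_\epsilon$ has slope $\epsilon<\min\mathrm{spec}(R_\lambda)$ and hence no non-constant one-periodic orbits at all: with $c:=-\min H$, the group $\HF^{(c,c+a)}(g_- - c)$ is $\H(W,\p W)$ coming from the constants, not $\SH^{(\epsilon,a)}(W)$. Likewise, $\SH^+(W)=\SH^{(\epsilon,\infty)}(W)$ requires taking a direct limit over slopes $b\to\infty$, which your slope-$a$ upper function cannot provide: it sees at most $\SH^{(\epsilon,a)}(W)$ again. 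The sandwich inequality is in fact reversed as well: on $W$ one has $\bar g_a - c\equiv -c\leq H$, so $\bar g_a - c$ can only serve as a \emph{lower} bound for $H$ --- and that is exactly the role the paper assigns it (its function $f_a$). The correct upper bound is a much steeper function $k_b$ of slope $b\gg a$ that flattens to $\min H$ only on $W^\eta$ for small $\eta$; one also needs $\eta$ small relative to $b$ to keep the non-constant orbits of $k_b$ inside $(-\min H,-\min H+a)$. With $f_a\le\widehat H\le k_b$ and the hypothesis $a<-\min H$ keeping the orbits of $f_a$, $k_b$, and the cylindrical end of $\widehat H$ out of the window, the composite continuation map identifies with $\iota_\epsilon^{a,b}$ via Lemma~\ref{lem:isom}, and then $\rk\iota_\epsilon^{a,\infty}\le\rk\iota_\epsilon^{a,b}$ completes the argument.
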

\begin{proof}
Let us consider $H$ as in the statement. There is no loss of generality in assuming $-\min H+a\notin\mathrm{spec}(H)$. Indeed since $\mathrm{spec}(R_\lambda)$ is closed, for $a'<a$ sufficiently close to $a$, we have $a'\notin\mathrm{spec}(R_\lambda)$, $\rk \iota_{\epsilon}^{a',\infty}=\rk \iota_{\epsilon}^{a,\infty}$. We choose $\delta>0$ such that 
\[
H|_{W^\delta}=-\min H,\qquad H|_{\widehat W\setminus W^{1-\delta}}=0.
\] 
Let $\widehat H\colon\widehat W\to\R$ be a smooth function such that $\widehat H=H$ on $W^{1-\delta/2}$ and $\widehat H=\widehat h$ on $\widehat W\setminus W^{1-\delta/2}$, where $\widehat h$ is a smooth function depending only on $r$ such that 
\[
\widehat h''\geq0,\qquad \widehat h|_{\widehat W\setminus W} = a(r-1)+c
\] 
for some $c>0$ small enough.
All one-periodic orbits of $\widehat H$ that are not one-periodic orbits of $H$ have action less than $a$ by \eqref{e:action_computation}: 
\[
\{x\in\PP_1(\widehat H)\mid \AA_{\widehat H}(x)>-\min H\}=\{x\in\PP_1(H)\mid \AA_{H}(x)>-\min H\}.	
\]
In order to relate the Floer homology of $H$ with the positive symplectic homology of $W$, we introduce two auxiliary functions. First, we choose a smooth function $k_b\colon\widehat W\to\R$ which is obtained by smoothening the piecewise linear function that is equal to $\min H$ on $W^{\eta}$ for $\eta<\delta$ and to $b(r-\eta)+\min H$ for $b\in\R\setminus\mathrm{spec}(R_\lambda)$ on $W\setminus W^\eta$. The function $k_b$ depends only on $r$ on $\widehat W\setminus W_\sk$ and $k_b''(r)\geq0$, see Figure \ref{fig:sandwich}. Taking $b$ large enough, we have $k_b\geq \widehat H$. The constant one-periodic orbits of $k_b$ have action equal to $-\min H$. We also take $\epsilon$ and $\eta$ small enough so that the following action estimate holds by \eqref{e:action_computation}:
\begin{equation}\label{eq:action_k_b}
-\min H+\epsilon< \AA_{k_b}(x)< -\min H+a \qquad \forall x\in\PP_1(k_b)\setminus\Crit k_b.
\end{equation}

Similarly, we take $f_a\colon\widehat W\to\R$ to be a convex, smooth approximation of the piecewise linear function which is equal to $\min H$ on $W$ and equal to $a(r-1)+\min H$ on $\widehat W\setminus W$, see Figure \ref{fig:sandwich}. We have $f_a\leq \widehat H$. All constant one-periodic orbits of $f_a$ have action $-\min H$ and
\begin{equation}\label{eq:action_f_a}
-\min H+\epsilon< \AA_{f_a}(x)< -\min H+a \qquad \forall x\in\PP_1(f_a)\setminus\Crit f_a.
\end{equation}

\begin{figure}[htb]
\centering
\includegraphics[width=0.7\textwidth,clip]{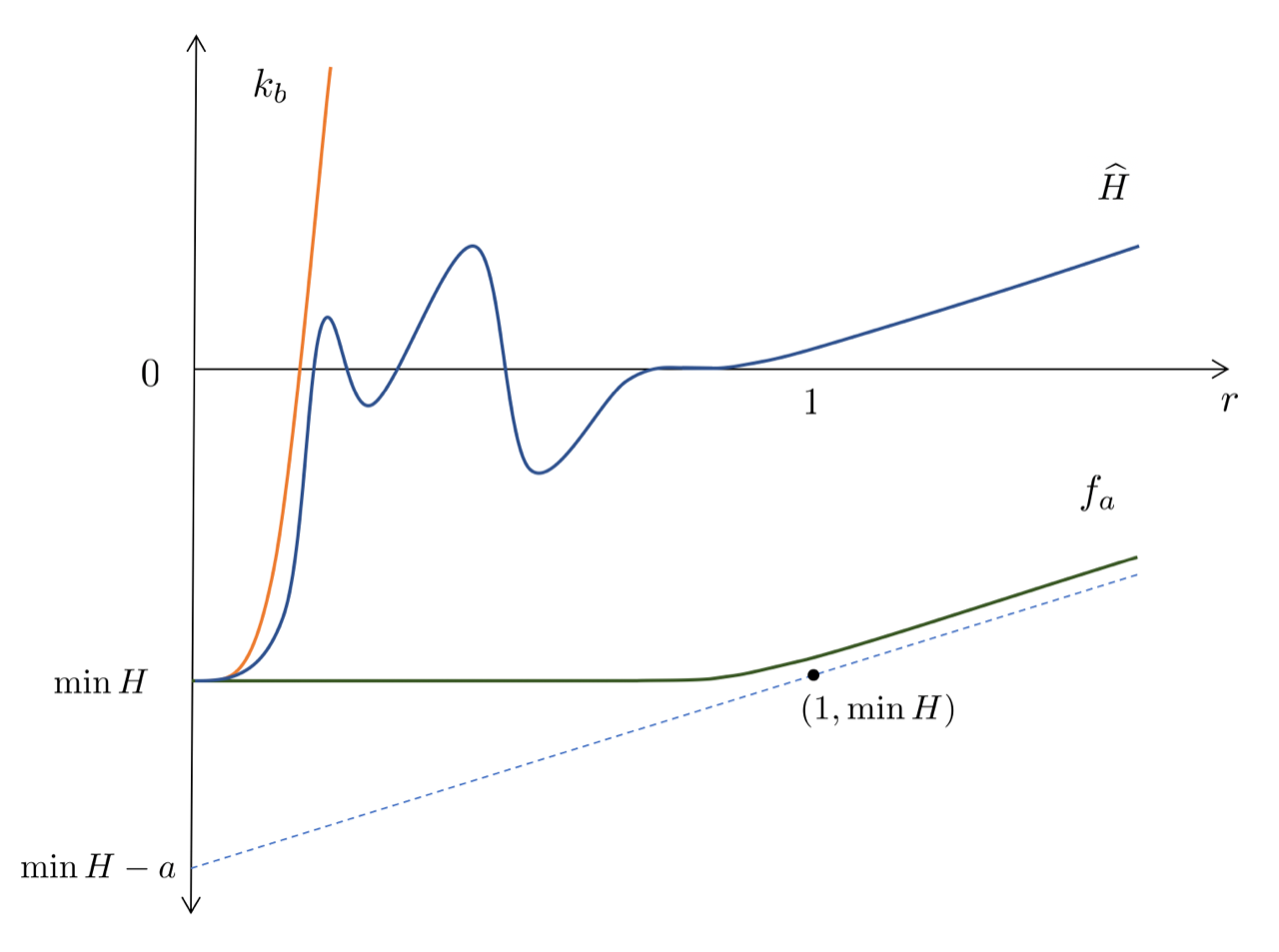}
\caption{The Hamiltonians $\widehat H$, $k_b$ and $f_a$}\label{fig:sandwich}
\end{figure}

We claim that there exists a commutative diagram 
\begin{equation}\label{eq:diagram}
\begin{gathered}
\xymatrix{
& \HF^{(-\min H+\epsilon/2,-\min H+a)}(k_{b})\ar^-{\cong}[r] & \SH^{(\epsilon,b)}(W)\\
\HF^{(-\min H+\epsilon/2,-\min H+a)}(\widehat H)\ar[ru]^{\Phi_2}&&\\
& \HF^{(-\min H+\epsilon/2,-\min H+a)}(f_a)\ar[r]^-{\cong}\ar[lu]_{\Phi_1}\ar[uu]_{\Phi_3} & \SH^{(\epsilon,a)}(W),\ar_{\iota^{a,b}_\epsilon}[uu]
}
\end{gathered}
\end{equation}
where $\epsilon>0$ is chosen so that $-\min H+\epsilon/2\notin\mathrm{spec}(\widehat H)$. 
The maps $\Phi_1$, $\Phi_2$, and $\Phi_3$ are continuation homomorphisms induced by monotone homotopies and satisfy $\Phi_2\circ\Phi_1=\Phi_3$. Once the diagram \eqref{eq:diagram} is established, the proposition follows from
\[
\begin{split}
 \#\Gamma&=\#\big\{x\in\PP_1^o(\widehat H)\mid -\min H<\AA_{\widehat H}(x)<-\min H+a\big\}\\
 &\geq \rk \HF^{(-\min H+\epsilon/2,-\min H+a)}(\widehat H)\\
 &\geq \rk \Big[\iota^{a,b}_\epsilon\colon\SH^{(\epsilon,a)}(W)\to \SH^{(\epsilon,b)}(W)\Big]\\
	&\geq \rk \Big[\iota^{a,\infty}_\epsilon\colon\SH^{(\epsilon,a)}(W)\to \SH^+(W)\Big],
\end{split}
\]
where the second inequality is due to \eqref{eq:diagram}, and the last inequality holds by the identity $\iota_\epsilon^{a,\infty}=\iota_\epsilon^{b,\infty}\circ\iota_\epsilon^{a,b}$.

Let us now define the horizontal isomorphisms in \eqref{eq:diagram} and show that the rectangular diagram commutes. To this purpose, we define 
\[
\widetilde k_b:=k_{b}-\min H-\epsilon/2,\qquad \widetilde f_a:=f_a-\min H-\epsilon/2.
\] 
We also consider a smooth family of functions $\widetilde k^s_b\colon\widehat W\to\R$ for $s\in[0,1-\eta]$ such that\vspace{-5pt}
	\begin{itemize}
	\item $\widetilde k^s_b=-\epsilon/2$ on $W^{\eta+s}$,\vspace{-5pt}
	\item $\widetilde k^s_b(r):=\widetilde k^s_b(r-s)$ for $r>\delta+s$.
	\end{itemize}
We note that $\widetilde k_b^0=\widetilde k_b$ and $\widetilde k_b^{1-\eta}\geq \widetilde f_a$. 
We define the rectangular diagram in \eqref{eq:diagram} as composition of the diagrams:
\begin{equation}\label{eq:Phi_diagram}
\begin{gathered}\xymatrix{
\HF^{(-\min H+\epsilon/2,-\min H+a)}(k_{b}) \ar[r]_-{\Phi_4}^-{\cong} &  \HF^{(\epsilon,\infty)}(\widetilde k_{b})  & \HF^{(\epsilon,\infty)}(\widetilde k_{b}^{1-\eta}) \ar_{\cong}^{\Phi_7}[l] \ar[r]^{\cong}   & \SH^{(\epsilon,b)}(W)\\
\HF^{(-\min H+\epsilon/2,-\min H+a)}(f_a)   \ar[u]_{\Phi_3}  \ar[r]_-{\Phi_5}^-{\cong} & \HF^{(\epsilon,\infty)}(\widetilde f_a) \ar[r]_{\id}\ar[u]_{\Phi_6} & \HF^{(\epsilon,\infty)}(\widetilde f_a) \ar[u]_{\Phi_8} \ar[r]^{\cong} & \SH^{(\epsilon,a)}(W)\ar_{\iota_\epsilon^{a,b}}[u]
}
\end{gathered}
\end{equation}
Since $k_b$ and $f_a$ do not have one-periodic orbits with action greater than $-\min H+a$, 
\[
\begin{split}
&\HF^{(-\min H+\epsilon/2,-\min H+a)}(k_{b})=\HF^{(-\min H+\epsilon/2,\infty)}(k_{b}),\\
&\HF^{(-\min H+\epsilon/2,-\min H+a)}(f_a) =\HF^{(-\min H+\epsilon/2,\infty)}(f_a).
\end{split}
\] 
The maps $\Phi_4$ and $\Phi_5$ are canonical isomorphisms: The functions $k_b$ (resp.~$f_a$) and $\widetilde k_b$ (resp.~$\widetilde f_a$) have the same one-periodic orbits with action shifted by $-\min H-\epsilon/2$ and the same Floer cylinders. These can also be understood as continuation maps of monotone homotopies. The map $\Phi_3$ is a continuation homomorphism, and $\Phi_6$ equals $\Phi_3$ with action shifted by $-\min H-\epsilon/2$. Thus, leftmost rectangle readily commutes.  The monotone homotopy $\widetilde k^s_b$ between $\widetilde k_b^{1-\eta}$ and $\widetilde k_b^0=\widetilde k_b$ has no one-periodic orbit with action equal to $\epsilon$ for all $s$.   Therefore the continuation homomorphism $\Phi_7$ induced by $\widetilde k^s_b$ is an isomorphism. The map $\Phi_8$ is also a continuation homomorphism induced by a monotone homotopy, and the rectangle in the middle commutes since all maps are continuation homomorphisms.  
Finally, the rightmost rectangle follows from \eqref{e:comlem} since $\widetilde f_a$ and $\widetilde k_b^{1-\eta}$ can be taken as $g_a$ and $g_b$ respectively given in Lemma \ref{lem:isom} and, again by action reasons,
\[
\HF^{(\epsilon,\infty)}(\widetilde k_{b}^{1-\eta})=\HF^{(\epsilon,b)}(\widetilde k_{b}^{1-\eta}),\quad \HF^{(\epsilon,\infty)}(\widetilde f_a)=\HF^{(\epsilon,a)}(\widetilde f_a).\qedhere
\]
\end{proof}

\subsection{Proof of Theorem \ref{main_thm}}
The statement is void if $c_{\SH^+}(W)=\infty$. Thus, we suppose  $c_{\SH^+}(W)<\infty$. It is enough to show 
\[
\tilde{c}_\HZ^{\,o}\big (W,W_\sk,c_{\SH^+}(W)\big)\leq c_{\SH^+}(W).
\]
We assume by contradiction that there is $H\in\widetilde{\mathcal H}(W,W_\sk)$ such that 
\[
-\min H>c_{\SH^+}(W),\qquad \AA_H(x)\notin(-\min H,-\min H+c_{\SH^+}(W)]\quad\forall x\in\PP_1^o(H).
\] 
Since $\mathrm{spec}(H)$ is closed and $\mathrm{spec}(R_\lambda)$ is nowhere dense, there exists $a\in(c_{\SH^+}(W),-\min H)$, $a\notin\mathrm{spec}(R_\lambda)$ such that 
\[ 
\AA_H(x)\notin(-\min H,-\min H+a)\qquad\forall x\in\PP_1^o(H).
\] 
This contradicts Proposition \ref{prp:lower_bound}, and thus the theorem is proved.
\qed
\medskip

\subsection{Proof of Theorem \ref{thm:nonautonomous}.(a)}
Let $H\colon S^1\x W\to\R$ be a smooth Hamiltonian with support inside $S^1\times (W\setminus \p W)$ and such that $H|_{S^1\x W_\sk}<0$. 
We extend $H$ smoothly to $\widehat H\colon S^1\x \widehat W  \to\R$ as in the definition of $\HF^{(a,b)}(H)=\HF^{(a,b)}(\widehat H)$ in  \eqref{eq:fh_cpt_supp}. We consider two piecewise linear functions 
\[
\begin{split}
&\bar f\colon\widehat W\longrightarrow \R,\qquad \bar f|_W=-c,\quad\;\bar f|_{\widehat W\setminus W}=\epsilon(r-1)-c \\[.5ex]
&\bar k\colon\widehat W\longrightarrow\R,\qquad \bar k|_{W^{\delta}}=-d,\quad \bar k |_{\widehat W\setminus W^{\delta}}=b(r-\delta)-d
\end{split}
\]
for some positive numbers $c,d,b,\delta>0$ with $b\notin\mathrm{spec}(R_\lambda)$ and for some $0<\epsilon<\min\mathrm{spec}(R_\lambda)$. Smoothening $\bar f\colon\widehat W\to\R$ near $\p W$ and $\bar k\colon\widehat W\to\R$ near $\p W^\delta$, we obtain smooth functions $f\colon\widehat W\to\R$ and $k\colon\widehat W\to\R$ respectively, both of which depend only on $r$ on $\widehat W\setminus W_\sk$ and convex. The assumption on $H$ ensures that for large $b,c>0$ and for small $d,\delta>0$, we have
\[
f(z)\leq \widehat H(t,z)\leq k(z)\qquad \forall (t,z)\in S^1\x\widehat W.
\]
Then for any $a\in(0,d)\setminus\mathrm{spec}(H)$, we have the commutative diagram
\begin{equation}\label{eq:diagram2}
\begin{gathered}\xymatrix@R-1.5pc{
& \HF^{(a,\infty)}(k)\ar^-{\cong}[r] & \SH^{(-\infty,b)}(W)\\
 \HF^{(a,\infty)}(\widehat H)\ar[ru]^{\Phi_2} &&\\
& \HF^{(a,\infty)}(f)\ar[r]^-{\cong}\ar[lu]_{\Phi_1}\ar[uu]_{\Phi_3} & \SH^{(-\infty,\epsilon)}(W), \ar^{\iota_{-\infty}^{\epsilon,b}}[uu]
}
\end{gathered}
\end{equation}
where the $\Phi$'s are continuation homomorphisms induced by monotone homotopies. The rectangular diagram is obtained as the rightmost one in \eqref{eq:Phi_diagram} noticing that $\HF^{(a,\infty)}(k)=\HF(k)$ and $\HF^{(a,\infty)}(f)=\HF(f)$ as $k$ and $f$ do not have one-periodic orbits with action less than $a$ by \eqref{e:action_computation}. The diagram in \eqref{eq:diagram2} readily yields 
\[
\rk \HF^{(a,\infty)}(H)=\rk \HF^{(a,\infty)}(\widehat H)\geq\rk\Phi_3= \rk \iota_{-\infty}^{\epsilon,b} = \rk \iota_{-\infty}^{\epsilon,\infty}
\]
where the last equality holds for large $b>0$. Since $\SH^{(-\infty,\epsilon)}(W)\cong \H(W,\p W)$ by \eqref{eq:SH_epsilon}, the proof is complete.
\qed

\subsection{Proof of Theorem \ref{thm:nonautonomous}.(b)}
Let $H\colon S^1\x W\to(-\infty,0]$ be a smooth Hamiltonian supported in $S^1\times (W\setminus \p W)$ such that $H|_{S^1\x W_\sk}<0$. We extend $H$ to $\widehat H\colon S^1\x\widehat W\to\R$ as in the proof of Theorem \ref{thm:nonautonomous}.(a). Let $k\colon\widehat W\to\R$ be a smooth function obtained also as before by smoothening a piecewise linear function 
\[
\bar k\colon\widehat W\longrightarrow\R,\qquad \bar k|_{W^\delta}=-d,\quad  \bar k|_{\widehat W\setminus W^{\delta}}=\epsilon (r-\delta)-d
\]
for $d,\delta>0$ and $0<\epsilon<\min\mathrm{spec}(R_\lambda)$. We take $d,\delta>0$ small enough and choose a sufficiently large $c>0$ to satisfy
\[
(k-c)(z) \leq \widehat H(t,z) \leq k(z) \qquad \forall (t,z)\in S^1\x\widehat W.
\]
Then for any $a\in(0,d)\setminus\mathrm{spec}(H)$, the following diagram commutes:
\[
\xymatrix@R-1.5pc{
& \HF^{(a,\infty)}(k).\\
\HF^{(a,\infty)}(\widehat H)\ar[ru]^{\Phi_2}&\\
& \HF^{(a,\infty)}(k-c) \ar[lu]_{\Phi_1}\ar[uu]_{\Phi_3}^{\cong}
}
\]
where $\Phi$'s are continuation homomorphisms induced by monotone homotopies. Moreover $\Phi_3$ is an isomorphism since $k$ and $k-c$ have the same slope and possess no one-periodic orbits with action less than $a$. Hence we conclude that $\Phi_2$ is a surjective homomorphism, and this finishes the proof since $\HF^{(a,\infty)}(k)= \HF(k)\cong\H(W;\p W)$.
\qed
\subsection{Proof of Corollary \ref{c:cross}}
Let $Q$ be a closed manifold endowed with a metric $g$ and denote by $D^*Q$ the associated unit-disc cotangent bundle. The foot-point projection $\pi\colon T^*Q\to Q$ gives a bijection between periodic Reeb orbits on $\p( D^*Q)$ with respect to the canonical one-form $\lambda_{T^*Q}$ and closed geodesics on $Q$ where the periods of Reeb orbits correspond to the length of geodesics. In particular if $\ell_\one$ denotes the length of the shortest non-constant, contractible closed geodesic, then 
\[
\min\mathrm{spec}(R_\lambda,\one)=\ell_\one. 
\]
Let $\epsilon>0$ be a positive number smaller than this common value. Consider the square root of the energy functional on the loop space of contractible loops 
\[
\mathcal E\colon\mathcal L_\one Q\to\R,\qquad \mathcal E(x)=\Big(\int_0^1\Vert \dot x(t)\Vert_g^2\,\di t\Big)^{1/2},
\]
where $\Vert\cdot\Vert_g$ is the norm induced by $g$. The functional $\mathcal E$ coincides with the length on the set of geodesics and yields a filtration $\H^{(\epsilon,a)}(\mathcal L_\one Q)$ of the singular homology of the loop space for $a\geq\epsilon$ together with inclusion homomorphisms
\[
\jmath_{\epsilon}^{a,b}\colon\H^{(\epsilon,a)}(\mathcal L_\one Q)\to \H^{(\epsilon,b)}(\mathcal L_\one Q)
\]
for $\epsilon\leq a\leq b$. By the action filtration version of Viterbo isomorphism \cite[Theorem 2.9]{Web06}, we have the commutative diagram
\begin{equation}\label{eq:viterbo_filtered}
\xymatrix{
\SH^{(\epsilon,b)}(D^*Q)\ar[r]^-\cong&\H^{(\epsilon,b)}(\mathcal L_\one Q)\\
\SH^{(\epsilon,a)}(D^*Q)\ar[u]^{\iota_\epsilon^{a,b}}\ar[r]^-\cong&\H^{(\epsilon,a)}(\mathcal L_\one Q).\ar[u]^{\jmath_\epsilon^{a,b}}
}
\end{equation}
By definition of $\epsilon$ we have $\SH^+(D^*Q)=\SH^{(\epsilon,\infty)}(D^*Q)$ and $\H(\mathcal L_\one Q,Q)=\H^{(\epsilon,\infty)}(\mathcal L_\one Q)$. Thus,
\[
c_{\SH^+}(D^*Q)=c(\mathcal E):=\inf\{a>0\ |\ \jmath_{\epsilon}^{a,\infty}\neq0\}.
\]
Since $\ell_\one\leq c_{\SH^+}(D^*Q)$ by constructing a suitable radial Hamiltonian, we are left to show $\ell_\one\geq c(\mathcal E)$ in the two cases mentioned in the statement.
\begin{itemize}
\item Let $(Q,g)$ be a closed, non-aspherical homogeneous space. Since $Q$ is non-aspherical, $\ell_\one$ is finite by the classical Lusternik--Fet theorem and the set of closed, non-constant, contractible geodesics with length $\ell_\one$ is non-empty. By \cite[Theorem 5]{Zil}, the map
\[ \jmath_\epsilon^{\ell_\one+\epsilon,\infty}\colon\H^{(\epsilon,\ell_\one+\epsilon)}(\mathcal L_\one Q)\to \H(\mathcal L_\one Q,Q)
\]
is non-zero for small $\epsilon$ small. Thus, $\ell_\one+\epsilon\geq c(\mathcal E)$ and the result follows letting $\epsilon$ to $0$.
\item Let $(Q,g)$ be a two-sphere with strictly positive Gaussian curvature. Abbondandolo and Mazzucchelli show in Lemma \ref{l:bir=sys} below that there is a continuous path $u\colon[-1,1]\to\{\mathcal E\leq \ell_\one\}$ with $u(-1),u(1)\in Q$ representing a non-trivial element in $\H_1(\mathcal L_\one Q,Q)$. Thus $\jmath_{\epsilon}^{\ell_\one+\epsilon,\infty}\neq0$ for all $\epsilon$ sufficiently small and we conclude that $\ell_\one\geq c(\mathcal E)$.
\end{itemize}

\appendix

\section[The monotonicity of the systole of convex Riemannian two-spheres (by Alberto Abbondandolo  and Marco Mazzucchelli)]{The monotonicity of the systole of convex Riemannian two-spheres \textnormal{(by Alberto Abbondandolo\footnote{Ruhr Universit\"at Bochum, Fakult\"at f\"ur Mathematik, \texttt{\href{mailto:alberto.abbondandolo@rub.de}{alberto.abbondandolo@rub.de}}} and Marco Mazzucchelli\footnote{CNRS, \'Ecole Normale Sup\'erieure de Lyon, UMPA, \texttt{\href{mailto:marco.mazzucchelli@ens-lyon.fr}{marco.mazzucchelli@ens-lyon.fr}}})}}
\label{appendix}

Throughout this appendix, the notion of convexity must be understood in the differentiable sense: A compact three-ball $B\subset\R^{3}$ with smooth boundary is strictly convex when there exists a smooth function $F\colon\R^3\to[0,\infty)$ with positive definite Hessian at every point and such that $\partial B=F^{-1}(1)$. Equivalently, the boundary sphere $M=\partial B$, which will always be equipped with the Riemannian metric $g$ that is the restriction of the ambient Euclidean metric, has strictly positive Gaussian curvature. The systole $\sys(M)>0$ is the length of the shortest closed geodesic of $(M,g)$. The main result of this appendix answers in dimension 3 a question that was posed to us by Yaron Ostrover:

\begin{thm}\label{thm:monotonicity_systol}
Let $B_1\subseteq B_2$ be two compact strictly convex three-balls in $\R^3$ with smooth boundary. Then $\sys(\partial B_1)\leq\sys(\partial B_2)$.
\end{thm}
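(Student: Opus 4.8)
The plan is to sandwich both systoles between a Birkhoff-type min--max quantity and to exploit that the nearest-point projection onto a convex set is $1$-Lipschitz. Write $M_i=\partial B_i$ with the Riemannian metric $g_i$ induced from $\R^3$, so $M_i\cong S^2$. For a metric $g$ on $S^2$ call a \emph{sweep-out} a continuous path $t\mapsto c_t$, $t\in[-1,1]$, of Lipschitz loops on $(S^2,g)$ with $c_{\pm1}$ constant and whose class in $\H_1(\mathcal L_\one S^2,S^2)$ is non-trivial, and set $W(g):=\inf_{\{c_t\}}\max_{t}\mathrm{length}(c_t)$, the infimum being over all sweep-outs. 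By the classical Lusternik--Schnirelmann--Birkhoff min--max principle on the free loop space, $W(g)>0$ and $W(g)$ is the length of some closed geodesic of $(S^2,g)$; in particular $\sys(S^2,g)\le W(g)$.

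First I would establish the monotonicity $W(g_1)\le W(g_2)$. Replacing $B_1$ by its image under a homothety of ratio $1-\epsilon$ centred at an interior point of $B_1$ multiplies both $\sys(M_1)$ and $W(g_1)$ by $1-\epsilon$ and places $B_1$ inside $\mathrm{int}(B_2)$, so we may assume $B_1\subseteq\mathrm{int}(B_2)$ and let $\epsilon\to0$ at the end. The nearest-point projection $\pi\colon\R^3\to B_1$ is $1$-Lipschitz because $B_1$ is convex, and since $B_2\setminus\mathrm{int}(B_1)$ is a smooth collar over $M_1$ the restriction $\pi|_{M_2}\colon M_2\to M_1$ has degree one, hence is a homotopy equivalence (a degree-one map between two-spheres). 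Consequently, if $\{c_t\}$ is a sweep-out of $(M_2,g_2)$ then $\{\pi\circ c_t\}$ is a sweep-out of $(M_1,g_1)$: the endpoints remain constant, non-triviality of the loop-space class is preserved since $\mathcal L(\pi|_{M_2})$ is a homotopy equivalence, and $\mathrm{length}(\pi\circ c_t)\le\mathrm{length}(c_t)$ for every $t$. Taking the infimum over sweep-outs of $(M_2,g_2)$ yields $W(g_1)\le W(g_2)$.

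It then remains to identify $W$ with $\sys$ on convex spheres. The inequality $\sys\le W$ was noted above. The reverse inequality $W(g)\le\sys(S^2,g)$ for a positively curved two-sphere --- equivalently, by the solution of Weyl's problem, for the boundary of a smooth strictly convex body in $\R^3$ --- is precisely Lemma~\ref{l:bir=sys}: it furnishes a sweep-out all of whose loops have length $\le\sys$. Granting it for $(M_2,g_2)$ and combining with the previous step,
\[
\sys(M_1)\ \le\ W(g_1)\ \le\ W(g_2)\ \le\ \sys(M_2),
\]
which is the assertion of the theorem.

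The step I expect to be the main obstacle is Lemma~\ref{l:bir=sys} itself, i.e.\ the construction of a sweep-out of a convex two-sphere all of whose loops are no longer than the systole; strict convexity (equivalently $K>0$) is genuinely used here, since for a general metric on $S^2$ one only has $\sys\le W$ and this inequality can be strict. The natural strategy is to invoke the Calabi--Cao theorem, by which a shortest closed geodesic $\gamma$ on a two-sphere with $K>0$ is simple and hence bounds two embedded discs $D_\pm$; one then sweeps each $D_\pm$ by loops of length $\le\mathrm{length}(\gamma)$ --- for instance by a curve-shortening process inside $D_\pm$ starting from $\gamma$, or by using the ambient convexity of $B\subset\R^3$ to build an explicit foliation of $D_\pm$ by short curves --- and concatenates the two one-parameter families across $\gamma$ to obtain the desired sweep-out. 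Making such a disc-sweeping argument work together with the length bound is the technical core of the appendix.
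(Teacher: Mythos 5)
Your proposal is correct and follows essentially the same route as the paper's proof: sandwich the systoles between a Birkhoff min--max via the 1-Lipschitz nearest-point projection onto $B_1$, and invoke Lemma~\ref{l:bir=sys} to identify that min--max with the systole on positively curved spheres. The only cosmetic differences are that you shrink $B_1$ by a homothety to ensure $B_1\subset\mathrm{int}(B_2)$ and characterize sweep-outs by a non-trivial class in $\H_1(\mathcal L_\one S^2,S^2)$, whereas the paper notes directly that $\pi|_{M_2}\colon M_2\to M_1$ is a 1-Lipschitz homeomorphism and defines the min--max family via degree-one maps $\tilde u\colon S^2\to M$ -- interchangeable formulations here.
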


The main ingredient of the proof is the observation that the systole of positively curved Riemannian two-spheres coincides with the classical Birkhoff min-max, as we will now prove.  Let $(M,g)$ be a Riemannian two-sphere. We denote the energy functional on the $W^{1,2}$ free loop space by 
\begin{align*}
E\colon\Lambda M=W^{1,2}(S^1,M)\to[0,\infty),
\qquad
E(\zeta)=\int_{S^1} \|\dot\zeta(t)\|^2_{g}\diff t.
\end{align*}
Here and in the following, we denote by $S^1=\R/\Z$ the 1-periodic circle.
We consider the unit sphere $S^2\subset\R^3$. For each $z\in[-1,1]$, we denote by $\gamma_z\colon S^1\to S^2$ the parallel at latitude $z$, parametrized as
\begin{align*}
 \gamma_z(t) = \left( \sqrt{1-z^2}\cos(2\pi t) , \sqrt{1-z^2}\sin(2\pi t) , z\right).
\end{align*}
For each continuous map $u\colon[-1,1]\to \Lambda M$ such that $E(u(0))=E(u(1))=0$ there exists a unique continuous map $\tilde u\colon S^2\to M$ such that $u(z)=\tilde u\circ\gamma_z$ for each $z\in[-1,1]$.
We denote by $\UU$ the space of such maps $u$ whose associated $\tilde u$ has degree $1$. The Birkhoff min-max value 
\begin{align*}
\bir(M,g) = \inf_{u\in\UU} \max_{z\in[-1,1]} E(u(z))^{1/2}
\end{align*}
is the length of some closed geodesic of $(M,g)$.

\begin{lem}
\label{l:bir=sys}
On every positively curved closed Riemannian two-sphere $(M,g)$, we have \[\bir(M,g)=\sys(M,g).\]
\end{lem}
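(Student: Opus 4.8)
The inequality $\bir(M,g)\le\sys(M,g)$ is the one that requires work; the reverse inequality is essentially the statement that Birkhoff's min-max value is the length of a closed geodesic, combined with the definition of the systole as the \emph{shortest} closed geodesic. Indeed, since $\bir(M,g)$ equals the length of some closed geodesic of $(M,g)$, and every closed geodesic has length at least $\sys(M,g)$, we get $\bir(M,g)\ge\sys(M,g)$ for free. So the heart of the matter is to exhibit, for any $\varepsilon>0$, a sweep-out $u\in\UU$ with $\max_{z\in[-1,1]}E(u(z))^{1/2}\le\sys(M,g)+\varepsilon$, or ideally with $\le\sys(M,g)$ exactly.

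**The construction of the sweep-out.** Let $\gamma$ be a shortest closed geodesic of $(M,g)$, so $\gamma$ is a simple closed curve of length $\sys(M,g)$ (simplicity of a shortest closed geodesic on a two-sphere is classical). Being simple, $\gamma$ divides $M$ into two closed discs $D_+$ and $D_-$. The idea is to build a sweep-out of $M$ by closed curves, starting at a constant curve (a point in the interior of $D_-$), sweeping across $D_-$ until we reach $\gamma$, and then sweeping across $D_+$ until we collapse to a point in the interior of $D_+$ — all the while keeping every curve in the sweep-out no longer than $\gamma$. The crucial geometric input is that here we are on a \emph{positively curved} two-sphere, so that each of the two discs $D_\pm$ bounded by $\gamma$ is "thin" in the length sense: one can foliate $D_\pm$ by closed curves (the level sets of the distance to a chosen interior point, suitably smoothed, or better, curves obtained by a curve-shortening-type or convexity argument) each of which has length at most $\mathrm{length}(\partial D_\pm)=\sys(M,g)$. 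This is where positive curvature is indispensable — on a general two-sphere a shortest geodesic need not bound a disc that is sweepable by shorter curves. Concretely, I would invoke either a convexity/isoperimetric property of geodesic discs under positive curvature, or run a curve-shortening flow on each disc starting from small circles around the interior point and noting that the flow decreases length and, when started near $\partial D_\pm$, stays below $\mathrm{length}(\partial D_\pm)$; gluing the two half-sweeps along $\gamma$ produces a continuous path $u\colon[-1,1]\to\Lambda M$ with $E(u(-1))=E(u(1))=0$.

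**Checking $u\in\UU$ and concluding.** I then need to verify that the associated map $\tilde u\colon S^2\to M$ has degree $1$, i.e. that the sweep-out genuinely covers $M$ once. This follows because the two half-sweeps cover $D_-$ and $D_+$ respectively, each with the correct orientation, and together they cover $M$ with total degree $1$; one can check this on a regular value of $\tilde u$ lying in the interior of, say, $D_+$, which is hit exactly once. Having produced $u\in\UU$ with $\max_z E(u(z))^{1/2}\le\sys(M,g)$, the definition of $\bir$ gives $\bir(M,g)\le\sys(M,g)$, completing the proof.

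**Main obstacle.** The delicate point is the quantitative claim that a shortest closed geodesic $\gamma$ on a positively curved two-sphere bounds on \emph{each} side a disc that admits a sweep-out by loops of length $\le\mathrm{length}(\gamma)$. Making this precise — whether via a monotonicity property of the length of equidistant curves, a curve-shortening argument with a length bound that survives up to the boundary, or a direct convexity estimate for geodesic discs under positive curvature — is the real content; once it is in hand, everything else (continuity of the sweep-out in $W^{1,2}$, the gluing along $\gamma$, the degree computation) is routine. An alternative route to the same conclusion is to appeal to known results relating Birkhoff's min-max to the shortest geodesic on convex surfaces (in the spirit of Croke, Balacheff, and others), but I would prefer the self-contained sweep-out construction above.
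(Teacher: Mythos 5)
Your plan correctly isolates the crux: the reverse inequality $\bir\ge\sys$ is immediate since $\bir$ is a positive critical value of $E$, so everything hinges on exhibiting a sweep-out through $\gamma$ by loops of energy at most $\sys^2$. But the paragraph you label ``Main obstacle'' is exactly the part of the argument that the paper actually supplies, and your proposal leaves it unfilled. You list three candidate mechanisms (equidistant curves, curve-shortening flow, a convexity/isoperimetric estimate), none of which is developed, and each has a problem as stated. For equidistant curves: the first variation of length of the parallel curve at distance $r$ from a geodesic vanishes at $r=0$, so you are at a critical point of length and need a second-order argument to see that the parallels actually get \emph{shorter}; moreover the equidistant curves develop singularities past the focal/cut locus, so they do not give a continuous $W^{1,2}$ sweep-out all the way to a point. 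For curve shortening: the flow only \emph{preserves} the bound $L\le\sys$ if you start with a curve of length $<\sys$, but the curve you start near, namely $\gamma$ itself, has length exactly $\sys$; your sentence even begins by flowing small circles outward, which the flow does not do.

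The missing idea that unblocks all of this is the one the paper puts at the front: for a shortest closed geodesic $\gamma$ on a positively curved sphere, the normal field $\nu=J\dot\gamma$ is \emph{parallel}, so $\diff^2E(\gamma)[\nu,\nu]=-\int_{S^1}K_g\|\dot\gamma\|_g^4\,\diff t<0$. Pushing $\gamma$ a small amount in the $\pm\nu$ directions then produces loops strictly inside $B_\pm$ with energy strictly below $\sys^2$; only now can a length/energy-nonincreasing flow finish the job. The paper runs this flow not as curve shortening but as the antigradient flow of the energy on Morse's finite-dimensional approximation $\Lambda_kM$, which makes two further points manageable that your sketch also glosses over: (i) a confinement argument showing the flowed loops never cross $\gamma$ (they stay in $U_\pm$, proved by analyzing the antigradient at a putative first exit time), and (ii) the fact that $\sys^2$ is the \emph{smallest} positive critical value, so the flow drives the energy down to an arbitrarily small level, after which an explicit exponential-chart contraction to a point stays below $\sys^2$. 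Without the second-variation step you have no way to get under the bar to begin with, and without the confinement and ``lowest critical value'' observations you cannot guarantee the flow delivers you to low energy on the correct side of $\gamma$. Your degree-$1$ check and the gluing along $\gamma$ are fine, but they are downstream of the missing input.
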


\begin{proof}
Let $\gamma\colon S^1\to M$ be a shortest closed geodesic of $(M,g)$ parametrized with constant speed, so that 
$E(\gamma)=L(\gamma)^2 = \sys(M,g)^2$.
A theorem of Calabi--Cao \cite{Calabi:1992aa} implies that $\gamma$ is simple, that is, an embedding $\gamma\colon S^1\hookrightarrow M$. We fix an orientation on $M$, and consider the corresponding complex structure of $(M,g)$. Namely, for every non-zero $v\in T_xM$, the tangent vector  $Jv\in T_xM$ is obtained by rotating $v$ in the positive direction of an angle $\pi/2$. We consider the vector field $\nu(t)=J\dot\gamma(t)$ orthogonal to $\dot\gamma(t)$. Notice that $\nu$ is a parallel vector field, since the complex structure $J$ is parallel. If $K_g$ denotes the Gaussian curvature of $(M,g)$, we have
\begin{align}
\label{e:negative_Hessian}
\diff^2 E(\gamma)[\nu,\nu] = \int_{S^1} \big( \|\nabla_t\nu\|_g^2 -K_g \|\dot\gamma\|_g^2 \|\nu\|_g^2 \big)\,\diff t= - \int_{S^1} K_g \|\dot\gamma\|_g^4\diff t <0.
\end{align}

We now consider Morse's finite dimensional approximation of the free loop space (see, e.g., \cite{Milnor:1963rf}). We fix a positive integer $k$ that is large enough so that $d(\zeta(t_0),\zeta(t_1))<\injrad(M,g)$ for all $\zeta\in\Lambda M$ with $E(\zeta)\leq E(\gamma)=\sys(M,g)^2$ and for all $t_0,t_1\in\R$ with $|t_1-t_0|<1/k$. Here, $d$ denotes the Riemannian distance on $(M,g)$. We consider the open finite dimensional manifold
\begin{align*}
 \Lambda_kM=\big\{ \xx=(x_0,...,x_{k-1})\in M\times ...\times M\ \big|\   d(x_i,x_{i+1})<\injrad(M,g)\ \ \forall i\in\Z_k \big\}.
\end{align*}
Such a manifold admits an embedding
\begin{align*}
\iota\colon\Lambda_kM\hookrightarrow\Lambda M,
\qquad
\iota(\xx)=\gamma_{\xx},
\end{align*}
where each restriction $\gamma_{\xx}|_{[i/k,(i+1)/k]}$ is the shortest geodesic parametrized with constant speed joining $x_i$ and $x_{i+1}$. We denote the restricted energy functional by
\begin{align*}
E_k=E\circ\iota\colon\Lambda_k M\to[0,\infty),
\qquad
E_k(\xx)= k\sum_{i\in\Z_k}d(x_i,x_{i+1})^2.
\end{align*}

Let $\xx:=\iota^{-1}(\gamma)$. We consider the tangent vector $\vv:=(v_0,...,v_{k-1})\in T_{\xx}(\Lambda_kM)$ such that $v_i=\nu(i/k)$ for all $i\in\Z_k$. Inequality~\eqref{e:negative_Hessian} readily implies that $\diff\iota(\xx)\vv$ lies in the negative cone of the Hessian $\diff^2E(\gamma)$, since
\begin{equation}
\label{e:negative_Hessian_2}
\begin{split}
\diff^2E_k(\xx)[\vv,\vv]
&
=
\tfrac{\diff^2}{\diff z^2}\big|_{z=0} E(\iota(\exp_{\xx}(z\vv))\\
&
\leq
\tfrac{\diff^2}{\diff z^2}\big|_{z=0} E(\exp_{\gamma(\cdot)}(z\nu(\cdot)))\\
&
=
\diff^2 E(\gamma)[\nu,\nu] \\
&<0.
\end{split} 
\end{equation}
Here, the exponential map in $\Lambda_kM$ is the one associated with the natural Riemannian metric $g\oplus...\oplus g$.

The complement $M\setminus\gamma$ has two connected components $B_+$ and $B_-$, each one diffeomorphic to a two-ball. The vector field $\nu$ points into one of them, say $B_+$. We define the continuous map 
\[
w\colon[-1/3,1/3]\to \Lambda_kM,
\qquad 
w(z)=\exp_{\xx}(z \epsilon \vv).\]
Notice that $w(0)=\xx$. We fix $\epsilon>0$ small enough so that, for all $z\in(0,1/3]$, the loop $\iota(w(\pm z))$ is entirely contained in the open ball $B_\pm$, and by Equation~\eqref{e:negative_Hessian_2}  we have
\[E_k(w(z))<E_k(w(0))=\sys(M,g)^2,\qquad\forall z\in[-1/3,1/3]\setminus\{0\}.\] 

We now consider the open subspaces $U_+,U_-\subset\Lambda_kM$ given by
\begin{align*}
U_\pm  =  \Lambda_kM \cap (B_\pm\times...\times B_\pm).
\end{align*}
We have $w(\pm1/3)\in U_{\pm}$. The flow $\phi_s$ of the anti-gradient $-\nabla E_k$ is complete in positive time $s$ in the sublevel set $E_k^{-1}([0,\sys(M,g)^2])$. We claim that \[\phi_s(w(\pm1/3))\in U_{\pm},\qquad \forall s\geq0.\] Indeed, assume by contradiction that there exists $s_0>0$ such that $\phi_{s_0}(w(\pm1/3))\in\partial U_{\pm}$, and take $s_0$ to be the minimal such time. If $\yy:=\phi_{s_0}(w(\pm1/3))$, the components of the anti-gradient vector $\zz:=-\nabla E_k(\yy)$ are given by 
\begin{align*}
 z_i= 2(\dot\gamma_{\yy}(\tfrac{i}{k}^+)-\dot\gamma_{\yy}(\tfrac{i}{k}^-)),\qquad \forall i\in\Z_k.
\end{align*}
Since $\yy\in\partial U_{\pm}$, at least one of its components $y_i$ must belong to $\partial B_{\pm}$. Assume that all the $y_i$'s belong to $\partial B_\pm$, and therefore they are of the form $y_i=\gamma(t_i)$ for some $t_i\in S^1$. In this case, we  have $z_i=\lambda_i \dot\gamma(t_i)$ for some $\lambda_i\in\R$; but this is impossible, since it would imply that all the components of $\phi_s(w(\pm1/3))$  belong to $\partial B_\pm$ for all $s\in\R$, and thus that $\phi_s(w(\pm1/3))$ belong to  $\partial U_\pm$ for all $s\in\R$. Therefore at least one component $y_i\in\partial B_{\pm}$ is adjacent to a component in the interior $y_{i-1}\in B_{\pm}$. However, this implies that the vector $z_i$ points inside $B_{\pm}$, and therefore $\phi_{s_0-\delta}(w(\pm1/3))\not\in U_{\pm}$ for all $\delta>0$ small enough, contradicting the minimality of~$s_0$.

We set 
$\delta:=\min\{\injrad(M,g),\sys(M)/(4k)\}$. Since $E_k(\phi_s(w(\pm 1/3)))<\sys(M,g)^2$ for all $s\geq0$, and since $\sys(M,g)^2$ is the smallest positive critical value of $E_k$, we can fix a large enough $s>0$ such that  $E_k(\phi_s(w(\pm1/3)))<\delta^2$. We extend $w$ to a map $w\colon[-2/3,2/3]\to \Lambda_kM$ by setting
\begin{align*}
w(\pm z)=\phi_{(3z-1)s}(w(\pm1/3)),\qquad\forall z\in[1/3,2/3].
\end{align*}
Notice that $w(\pm z)\in U_\pm$ for all $z\in(0,2/3]$, and $E_k(w(\pm2/3))<\delta^2$. We set 
\[\yy^{\pm}=(y_0^{\pm},...,y_{k-1}^{\pm}):=w(\pm2/3).\]
For each $r\in[0,1]$, we define $\yy^{\pm}(r)=(y_0^{\pm}(r),...,y_{k-1}^{\pm}(r))$ by
\begin{align*}
y_i^{\pm}(r):=\exp_{y_0^{\pm}}((1-r)\exp_{y_0^{\pm}}^{-1}(y_i^{\pm})).
\end{align*}
Notice that $\yy^{\pm}(0)=\yy^\pm$, $\yy^{\pm}(r)\in  U_\pm$, and 
\begin{align*}
E_k(\yy^{\pm}(r)) & = k\sum_{i\in\Z_k} d(y_i^{\pm}(r) , y_{i+1}^{\pm}(r))^2 
< 4k^2\delta^2 \leq \sys(M,g)^2,\qquad\forall r\in[0,1],\\
E_k(\yy^{\pm}(1)) & =0.
\end{align*}
We extend $w$ to a continuous map $w\colon[-1,1]\to \Lambda_kM$ by setting
\begin{align*}
w(\pm z)=\yy^{\pm}(3z-2),\qquad\forall z\in[2/3,1].
\end{align*}
Finally, we define $u:=\iota\circ w\colon[-1,1]\to \Lambda M$. Notice that the associated continuous map $\tilde u\colon S^2\to M$ has degree 
$1$; indeed, the preimage $u^{-1}(\gamma(t))$ is a singleton for every $t\in S^1$, and the restriction of $u$ to a neighborhood of $u^{-1}(\gamma)$ is a homeomorphism onto its image. Therefore $u\in\UU$, and
\begin{align*}
\bir(M,g)\leq \max_{z\in[-1,1]} E(u(z))^{1/2} = E(u(0))^{1/2} = \sys(M,g).
\end{align*}
On the other hand, $\bir(M,g)^2$ is a positive critical value of $E$, and therefore 
\[\bir(M,g)\geq \sys(M,g). \qedhere\]
\end{proof}

\begin{proof}[Proof of Theorem~\ref{thm:monotonicity_systol}]
We set $M_i:=\partial B_i$, $i=1,2$. Since the regions $B_1\subset B_2$ are strictly convex, for each $x\in M_2$ there exists a unique $\pi(x)\in M_1$ such that 
\[
\|x-\pi(x)\| = \min_{y\in M_1} \|x-y\|.
\]
The map $\pi\colon M_2\to M_1$ is a 1-Lipschitz homeomorphism with respect to the Riemannian metrics $g_i$ on $M_i$ that are restriction of the ambient Euclidean metric. In particular, for every $W^{1,2}$ curve $\gamma_2\colon S^1\to M_2$, if we denote by $\gamma_1\colon=\pi\circ\gamma_2$ its image in $M_1$, we have
\begin{align*}
\int_{S_1} \|\dot\gamma_2(t)\|^2\diff t \geq \int_{S_1} \|\dot\gamma_1(t)\|^2\diff t 
\end{align*}
We denote by $\UU_1$ and $\UU_2$ the family of maps involved in the definition of the Birkhoff min-max values of $M_1$ and $M_2$ respectively. Notice that $\pi\circ u\in\UU_1$ for all $u\in\UU_2$. Therefore, if we denote the energy of $W^{1,2}$ loops $\gamma\colon S^1\to\R^3$ by
\begin{align*}
E(\gamma)=\int_{S^1} \|\dot\gamma(t)\|^2\,\diff t,
\end{align*}
we have
\begin{align*}
 \bir(M_2)
 =
 \inf_{u\in\UU_2} \max_{z\in[-1,1]} E(u(z))^{1/2}
 \geq
 \inf_{u\in\UU_2} \max_{z\in[-1,1]} E(\pi\circ u(z))^{1/2}
 \geq
 \bir(M_1).
\end{align*}
This, together with Lemma~\ref{l:bir=sys}, implies that $\sys(M_2)\geq\sys(M_1)$.
\end{proof}

\bibliographystyle{amsalpha}
\bibliography{HZ_vs_SH}

\def\cprime{$'$} \def\cprime{$'$}
\providecommand{\bysame}{\leavevmode\hbox to3em{\hrulefill}\thinspace}
\providecommand{\MR}{\relax\ifhmode\unskip\space\fi MR }
\providecommand{\MRhref}[2]{%
  \href{http://www.ams.org/mathscinet-getitem?mr=#1}{#2}
}
\providecommand{\href}[2]{#2}
\begin{thebibliography}{AGKM20}

\bibitem[Abo15]{Abo15}
Mohammed Abouzaid, \emph{Symplectic cohomology and {V}iterbo's theorem}, Free
  loop spaces in geometry and topology, IRMA Lect. Math. Theor. Phys., vol.~24,
  Eur. Math. Soc., Z\"{u}rich, 2015, pp.~271--485. \MR{3444367}

\bibitem[AFO17]{AFO17}
Peter Albers, Urs Frauenfelder, and Alexandru Oancea, \emph{Local systems on
  the free loop space and finiteness of the {H}ofer-{Z}ehnder capacity}, Math.
  Ann. \textbf{367} (2017), no.~3-4, 1403--1428. \MR{3623229}

\bibitem[AGKM20]{AGKM}
Miguel Abreu, Jean Gutt, Jungsoo Kang, and Leonardo Macarini, \emph{Two closed
  orbits for non-degenerate {R}eeb flows}, Math. Proc. Camb. Phil. Soc. (2020),
  1--36.

\bibitem[Arn86]{Arn86}
V.~I. Arnol{\cprime}d, \emph{The first steps of symplectic topology}, Uspekhi
  Mat. Nauk \textbf{41} (1986), no.~6(252), 3--18, 229. \MR{890489}

\bibitem[AS06]{AS06}
Alberto Abbondandolo and Matthias Schwarz, \emph{On the {F}loer homology of
  cotangent bundles}, Comm. Pure Appl. Math. \textbf{59} (2006), no.~2,
  254--316. \MR{2190223}

\bibitem[AS14]{AS14}
\bysame, \emph{Corrigendum: {O}n the {F}loer homology of cotangent bundles},
  Comm. Pure Appl. Math. \textbf{67} (2014), no.~4, 670--691. \MR{3168124}

\bibitem[AS15]{AS15}
\bysame, \emph{The role of the {L}egendre transform in the study of the {F}loer
  complex of cotangent bundles}, Comm. Pure Appl. Math. \textbf{68} (2015),
  no.~11, 1885--1945. \MR{3403755}

\bibitem[AS20]{AS20}
Luca Asselle and Maciej Starostka, \emph{The {P}alais-{S}male condition for the
  {H}amiltonian action on a mixed regularity space of loops in cotangent
  bundles and applications}, Calc. Var. Partial Differential Equations
  \textbf{59} (2020), no.~4, Paper No. 113, 28. \MR{4114264}

\bibitem[BM14]{BM14}
Matthew~Strom Borman and Mark McLean, \emph{Bounding {L}agrangian widths via
  geodesic paths}, Compos. Math. \textbf{150} (2014), no.~12, 2143--2183.
  \MR{3292298}

\bibitem[BPS03]{BPS03}
Paul Biran, Leonid Polterovich, and Dietmar Salamon, \emph{Propagation in
  {H}amiltonian dynamics and relative symplectic homology}, Duke Math. J.
  \textbf{119} (2003), no.~1, 65--118. \MR{1991647}

\bibitem[CC92]{Calabi:1992aa}
Eugenio Calabi and Jian~Guo Cao, \emph{Simple closed geodesics on convex
  surfaces}, J. Differential Geom. \textbf{36} (1992), no.~3, 517--549.
  \MR{1189495}

\bibitem[CFH95]{CFH95}
Kai Cieliebak, Andreas Floer, and Helmut Hofer, \emph{Symplectic homology.
  {II}. {A} general construction}, Math. Z. \textbf{218} (1995), no.~1,
  103--122. \MR{1312580}

\bibitem[CGK04]{CGK04}
Kai Cieliebak, Viktor~L. Ginzburg, and Ely Kerman, \emph{Symplectic homology
  and periodic orbits near symplectic submanifolds}, Comment. Math. Helv.
  \textbf{79} (2004), no.~3, 554--581. \MR{2081726}

\bibitem[CHLS07]{CHLS}
Kai Cieliebak, Helmut Hofer, Janko Latschev, and Felix Schlenk,
  \emph{Quantitative symplectic geometry}, Dynamics, ergodic theory, and
  geometry, Math. Sci. Res. Inst. Publ., vol.~54, Cambridge Univ. Press,
  Cambridge, 2007, pp.~1--44. \MR{2369441}

\bibitem[FGS05]{FGS05}
Urs Frauenfelder, Viktor~L. Ginzburg, and Felix Schlenk, \emph{Energy capacity
  inequalities via an action selector}, Geometry, spectral theory, groups, and
  dynamics, Contemp. Math., vol. 387, Amer. Math. Soc., Providence, RI, 2005,
  pp.~129--152. \MR{2179791}

\bibitem[FH94]{FH94}
Andreas Floer and Helmut Hofer, \emph{Symplectic homology. {I}. {O}pen sets in
  {${\bf C}^n$}}, Math. Z. \textbf{215} (1994), no.~1, 37--88. \MR{1254813}

\bibitem[Flo88]{Flo88}
Andreas Floer, \emph{Morse theory for {L}agrangian intersections}, J.
  Differential Geom. \textbf{28} (1988), no.~3, 513--547.

\bibitem[Flo89]{Flo89}
\bysame, \emph{Symplectic fixed points and holomorphic spheres}, Comm. Math.
  Phys. \textbf{120} (1989), no.~4, 575--611. \MR{987770}

\bibitem[FO99]{FO99}
Kenji Fukaya and Kaoru Ono, \emph{Arnold conjecture and {G}romov-{W}itten
  invariant}, Topology \textbf{38} (1999), no.~5, 933--1048. \MR{1688434}

\bibitem[FP17]{FP17}
Urs Frauenfelder and Andrei Pajitnov, \emph{Finiteness of {$\pi_1$}-sensitive
  {H}ofer-{Z}ehnder capacity and equivariant loop space homology}, J. Fixed
  Point Theory Appl. \textbf{19} (2017), no.~1, 3--15. \MR{3625059}

\bibitem[FS07]{FS07}
Urs Frauenfelder and Felix Schlenk, \emph{Hamiltonian dynamics on convex
  symplectic manifolds}, Israel J. Math. \textbf{159} (2007), 1--56.
  \MR{2342472}

\bibitem[GG04]{GG04}
Viktor~L. Ginzburg and Ba\c{s}ak~Z. G{\"u}rel, \emph{Relative {H}ofer-{Z}ehnder
  capacity and periodic orbits in twisted cotangent bundles}, Duke Math. J.
  \textbf{123} (2004), no.~1, 1--47. \MR{2060021}

\bibitem[GG15]{GG15}
\bysame, \emph{The {C}onley conjecture and beyond}, Arnold Math. J. \textbf{1}
  (2015), no.~3, 299--337. \MR{3390228}

\bibitem[Gin05]{Gin05}
Viktor~L. Ginzburg, \emph{The {W}einstein conjecture and theorems of nearby and
  almost existence}, The breadth of symplectic and {P}oisson geometry, Progr.
  Math., vol. 232, Birkh\"{a}user Boston, Boston, MA, 2005, pp.~139--172.
  \MR{2103006}

\bibitem[Gin10]{Gin10}
\bysame, \emph{The {C}onley conjecture}, Ann. of Math. (2) \textbf{172} (2010),
  no.~2, 1127--1180. \MR{2680488}

\bibitem[GL00]{GL00}
Daniel Gatien and Fran\c{c}ois Lalonde, \emph{Holomorphic cylinders with
  {L}agrangian boundaries and {H}amiltonian dynamics}, Duke Math. J.
  \textbf{102} (2000), no.~3, 485--511. \MR{1756107}

\bibitem[Gro85]{Gro85}
M.~Gromov, \emph{Pseudo holomorphic curves in symplectic manifolds}, Invent.
  Math. \textbf{82} (1985), no.~2, 307--347. \MR{809718}

\bibitem[GS18]{GS18}
Viktor~L. Ginzburg and Jeongmin Shon, \emph{On the filtered symplectic homology
  of prequantization bundles}, Internat. J. Math. \textbf{29} (2018), no.~11,
  1850071, 35. \MR{3871720}

\bibitem[G{\"u}r08]{Gur08}
Ba\c{s}ak~Z. G{\"u}rel, \emph{Totally non-coisotropic displacement and its
  applications to {H}amiltonian dynamics}, Commun. Contemp. Math. \textbf{10}
  (2008), no.~6, 1103--1128. \MR{2483254}

\bibitem[GX20]{GX19}
Wenmin Gong and Jinxin Xue, \emph{Floer homology in the cotangent bundle of a
  closed {F}insler manifold and noncontractible periodic orbits}, Nonlinearity
  \textbf{33} (2020), no.~12, 6297--6348.

\bibitem[Her00]{H00}
David Hermann, \emph{Holomorphic curves and {H}amiltonian systems in an open
  set with restricted contact-type boundary}, Duke Math. J. \textbf{103}
  (2000), no.~2, 335--374. \MR{1760631}

\bibitem[HS95]{HS95}
Helmut Hofer and Dietmar~A. Salamon, \emph{Floer homology and {N}ovikov rings},
  The {F}loer memorial volume, Progr. Math., vol. 133, Birkh\"{a}user, Basel,
  1995, pp.~483--524. \MR{1362838}

\bibitem[HV88]{HV88}
Helmut Hofer and Claude Viterbo, \emph{The {W}einstein conjecture in cotangent
  bundles and related results}, Ann. Scuola Norm. Sup. Pisa Cl. Sci. (4)
  \textbf{15} (1988), no.~3, 411--445 (1989). \MR{1015801}

\bibitem[HV92]{HV92}
\bysame, \emph{The {W}einstein conjecture in the presence of holomorphic
  spheres}, Comm. Pure Appl. Math. \textbf{45} (1992), no.~5, 583--622.
  \MR{1162367}

\bibitem[HZ87]{HZ87}
Helmut Hofer and Eduard Zehnder, \emph{Periodic solutions on hypersurfaces and
  a result by {C}. {V}iterbo}, Invent. Math. \textbf{90} (1987), no.~1, 1--9.
  \MR{906578}

\bibitem[HZ11]{HZ11}
\bysame, \emph{Symplectic invariants and {H}amiltonian dynamics}, Modern
  Birkh\"auser Classics, Birkh\"auser Verlag, Basel, 2011, Reprint of the 1994
  edition.

\bibitem[Iri14]{Iri14}
Kei Irie, \emph{Hofer-{Z}ehnder capacity of unit disk cotangent bundles and the
  loop product}, J. Eur. Math. Soc. (JEMS) \textbf{16} (2014), no.~11,
  2477--2497. \MR{3283403}

\bibitem[Kan14]{Kan}
Jungsoo Kang, \emph{Symplectic homology of displaceable {L}iouville domains and
  leafwise intersection points}, Geom. Dedicata \textbf{170} (2014), 135--142.
  \MR{3199480}

\bibitem[LO96]{LO96}
H\^{o}ng~V\^{a}n L\^{e} and Kaoru Ono, \emph{Cup-length estimates for
  symplectic fixed points}, Contact and symplectic geometry ({C}ambridge,
  1994), Publ. Newton Inst., vol.~8, Cambridge Univ. Press, Cambridge, 1996,
  pp.~268--295. \MR{1432466}

\bibitem[LT98]{LT98}
Gang Liu and Gang Tian, \emph{Floer homology and {A}rnold conjecture}, J.
  Differential Geom. \textbf{49} (1998), no.~1, 1--74. \MR{1642105}

\bibitem[LT00]{LT00}
\bysame, \emph{Weinstein conjecture and {GW}-invariants}, Commun. Contemp.
  Math. \textbf{2} (2000), no.~4, 405--459. \MR{1806943}

\bibitem[Lu98]{Lu98}
Guangcun Lu, \emph{The {W}einstein conjecture on some symplectic manifolds
  containing the holomorphic spheres}, Kyushu J. Math. \textbf{52} (1998),
  no.~2, 331--351. \MR{1645455}

\bibitem[Lu06]{Lu06}
\bysame, \emph{Gromov-{W}itten invariants and pseudo symplectic capacities},
  Israel J. Math. \textbf{156} (2006), 1--63. \MR{2282367}

\bibitem[Mac04]{Mac04}
Leonardo Macarini, \emph{Hofer-{Z}ehnder capacity and {H}amiltonian circle
  actions}, Commun. Contemp. Math. \textbf{6} (2004), no.~6, 913--945.
  \MR{2112475}

\bibitem[Mil63]{Milnor:1963rf}
John Milnor, \emph{Morse theory}, Based on lecture notes by M. Spivak and R.
  Wells. Annals of Mathematics Studies, No. 51, Princeton University Press,
  Princeton, N.J., 1963. \MR{0163331}

\bibitem[MS05]{MS05}
Leonardo Macarini and Felix Schlenk, \emph{A refinement of the
  {H}ofer-{Z}ehnder theorem on the existence of closed characteristics near a
  hypersurface}, Bull. London Math. Soc. \textbf{37} (2005), no.~2, 297--300.
  \MR{2119029}

\bibitem[Oh06]{Oh}
Yong-Geun Oh, \emph{Lectures on {F}loer theory and spectral invariants of
  {H}amiltonian flows}, Morse theoretic methods in nonlinear analysis and in
  symplectic topology, NATO Sci. Ser. II Math. Phys. Chem., vol. 217, Springer,
  Dordrecht, 2006, pp.~321--416. \MR{2276955}

\bibitem[Ono95]{Ono95}
Kaoru Ono, \emph{On the {A}rnol\cprime d conjecture for weakly monotone
  symplectic manifolds}, Invent. Math. \textbf{119} (1995), no.~3, 519--537.
  \MR{1317649}

\bibitem[Rab78]{Rab78}
Paul~H. Rabinowitz, \emph{Periodic solutions of {H}amiltonian systems}, Comm.
  Pure Appl. Math. \textbf{31} (1978), no.~2, 157--184. \MR{467823}

\bibitem[Rit09]{Rit09}
Alexander~F. Ritter, \emph{Novikov-symplectic cohomology and exact {L}agrangian
  embeddings}, Geom. Topol. \textbf{13} (2009), no.~2, 943--978. \MR{2470967}

\bibitem[Rot20]{Rot}
Thomas~O. Rot, \emph{Answer to the {M}ath{O}verflow question ``{R}elative
  homology of free loop space with respect to constant loops"},
  mathoverflow.net/questions/373405, 2020.

\bibitem[Sal99]{Sal99}
Dietmar Salamon, \emph{Lectures on {F}loer homology}, Symplectic geometry and
  topology ({P}ark {C}ity, {UT}, 1997), IAS/Park City Math. Ser., vol.~7, Amer.
  Math. Soc., Providence, RI, 1999, pp.~143--229. \MR{1702944}

\bibitem[Sch00]{Sch00}
Matthias Schwarz, \emph{On the action spectrum for closed symplectically
  aspherical manifolds}, Pacific J. Math. \textbf{193} (2000), no.~2, 419--461.
  \MR{1755825}

\bibitem[Sch06]{Schl06}
Felix Schlenk, \emph{Applications of {H}ofer's geometry to {H}amiltonian
  dynamics}, Comment. Math. Helv. \textbf{81} (2006), no.~1, 105--121.
  \MR{2208800}

\bibitem[Str90]{Str90}
Michael Struwe, \emph{Existence of periodic solutions of {H}amiltonian systems
  on almost every energy surface}, Bol. Soc. Brasil. Mat. (N.S.) \textbf{20}
  (1990), no.~2, 49--58. \MR{1143173}

\bibitem[Sug16]{Su16}
Yoshihiro Sugimoto, \emph{Hofer's metric on the space of {L}agrangian
  submanifolds and wrapped {F}loer homology}, J. Fixed Point Theory Appl.
  \textbf{18} (2016), no.~3, 547--567. \MR{3551405}

\bibitem[Sug19]{Sug19}
\bysame, \emph{The sharp energy-capacity inequality on convex symplectic
  manifolds}, J. Fixed Point Theory Appl. \textbf{21} (2019), no.~1, Paper No.
  13, 18. \MR{3892466}

\bibitem[Sul75]{Sul75}
Dennis Sullivan, \emph{Differential forms and the topology of manifolds},
  Manifolds---{T}okyo 1973 ({P}roc. {I}nternat. {C}onf., {T}okyo, 1973), 1975,
  pp.~37--49. \MR{0370611}

\bibitem[SW06]{SW06}
Dietmar~A. Salamon and Joa Weber, \emph{Floer homology and the heat flow},
  Geom. Funct. Anal. \textbf{16} (2006), no.~5, 1050--1138. \MR{2276534}

\bibitem[Ush11]{Ush11}
Michael Usher, \emph{Deformed {H}amiltonian {F}loer theory, capacity estimates
  and {C}alabi quasimorphisms}, Geom. Topol. \textbf{15} (2011), no.~3,
  1313--1417. \MR{2825315}

\bibitem[Ush12]{Ush12}
\bysame, \emph{Many closed symplectic manifolds have infinite {H}ofer-{Z}ehnder
  capacity}, Trans. Amer. Math. Soc. \textbf{364} (2012), no.~11, 5913--5943.
  \MR{2946937}

\bibitem[Vit87]{Vit87}
Claude Viterbo, \emph{A proof of {W}einstein's conjecture in {${\bf R}^{2n}$}},
  Ann. Inst. H. Poincar\'{e} Anal. Non Lin\'{e}aire \textbf{4} (1987), no.~4,
  337--356. \MR{917741}

\bibitem[Vit92]{Vit92}
\bysame, \emph{Symplectic topology as the geometry of generating functions},
  Math. Ann. \textbf{292} (1992), no.~4, 685--710. \MR{1157321}

\bibitem[Vit96]{Vit96}
\bysame, \emph{Functors and computations in {F}loer homology with applications
  {II}}, arXiv:1805.01316, 1996.

\bibitem[Vit97]{Vit97}
\bysame, \emph{Exact {L}agrange submanifolds, periodic orbits and the
  cohomology of free loop spaces}, J. Differential Geom. \textbf{47} (1997),
  no.~3, 420--468. \MR{1617648}

\bibitem[Vit99]{Vit99}
\bysame, \emph{Functors and computations in {F}loer homology with applications.
  {I}}, Geom. Funct. Anal. \textbf{9} (1999), no.~5, 985--1033. \MR{1726235}

\bibitem[VPS76]{VPS76}
Micheline Vigu\'{e}-Poirrier and Dennis Sullivan, \emph{The homology theory of
  the closed geodesic problem}, J. Differential Geometry \textbf{11} (1976),
  no.~4, 633--644. \MR{455028}

\bibitem[Web06]{Web06}
Joa Weber, \emph{Noncontractible periodic orbits in cotangent bundles and
  {F}loer homology}, Duke Math. J. \textbf{133} (2006), no.~3, 527--568.
  \MR{2228462}

\bibitem[Wei78]{Wei78}
Alan Weinstein, \emph{Periodic orbits for convex {H}amiltonian systems}, Ann.
  of Math. (2) \textbf{108} (1978), no.~3, 507--518. \MR{512430}

\bibitem[Wei79]{Wei79}
\bysame, \emph{On the hypotheses of {R}abinowitz' periodic orbit theorems}, J.
  Differential Equations \textbf{33} (1979), no.~3, 353--358.

\bibitem[Zil77]{Zil}
Wolfgang Ziller, \emph{The free loop space of globally symmetric spaces},
  Invent. Math. \textbf{41} (1977), no.~1, 1--22. \MR{649625}

\end{thebibliography}
\end{document}